\definecolor{linkred}{RGB}{220,20,60}
\definecolor{linkblue}{RGB}{16, 78, 139}
	\titlespacing{\section}{0pt}{12pt}{0pt}
	\titlespacing{\subsection}{0pt}{6pt}{0pt}
\long\def\@footnotetext#1{%
\H@@footnotetext{%
\ifHy@nesting 
\hyper@@anchor{\@currentHref}{#1}%
\else 
\Hy@raisedlink{\hyper@@anchor{\@currentHref}{\relax}}#1%
\fi 
}}
\def\@footnotemark{%
\leavevmode 
\ifhmode\edef\@x@sf{\the\spacefactor}\nobreak\fi 
\H@refstepcounter{Hfootnote}%
\hyper@makecurrent{Hfootnote}%
\hyper@linkstart{link}{\@currentHref}%
\@makefnmark 
\hyper@linkend 
\ifhmode\spacefactor\@x@sf\fi 
\relax 
}%
\renewcommand*\@footnotemark{%
\leavevmode 
\ifhmode 
\edef\@x@sf{\the\spacefactor}%
\FN@mf@check 
\nobreak 
\fi 
\H@refstepcounter{Hfootnote}%
\hyper@makecurrent{Hfootnote}%
\hyper@linkstart{link}{\@currentHref}%
\@makefnmark 
\hyper@linkend 
\ifFN@pp@towrite 
\FN@pp@writetemp 
\FN@pp@towritefalse 
\fi 
\FN@mf@prepare 
\ifhmode\spacefactor\@x@sf\fi 
\relax%
}%
\theoremstyle{plain}
\newtheorem{theorem}{Theorem}[section]
\newtheorem{proposition}[theorem]{Proposition}
\newtheorem{lemma}[theorem]{Lemma}
\newtheorem{corollary}[theorem]{Corollary}
\theoremstyle{definition}
\newtheorem{definition}[theorem]{Definition}
\newtheorem{remark}[theorem]{Remark}
\newcommand{\R}{{\mathbb R}}
\newcommand{\CC}{{\mathcal C}}
\newcommand{\A}{{\mathcal A}}
\newcommand{\M}{{\mathcal M}}
\newcommand{\arcsinh}{{\,\rm arcsinh}}
\newcommand{\arccosh}{{\,\rm arccosh}}
\newcommand{\arctanh}{{\,\rm arctanh}}
\newcommand{\LL}{{\mathcal L}}
\long\def\symbolfootnote[#1]#2{\begingroup%
\def\thefootnote{\fnsymbol{footnote}}\footnote[#1]{#2}\endgroup}
\def\blfootnote{\xdef\@thefnmark{}\@footnotetext}
\begin{document}

{\Large \bfseries Interrogating surface length spectra and quantifying isospectrality}

{\large Hugo Parlier\symbolfootnote[1]{\normalsize Research supported by Swiss National Science Foundation grant number PP00P2\textunderscore 153024 \\
{\em MSC2010:} Primary: 32G15, 30F10. Secondary: 30F60, 53C22, 58J50, 11F72. \\
{\em Key words and phrases:} spectral theory, closed geodesics on hyperbolic surfaces, isospectral families, moduli spaces}
 }

{\bf Abstract.} 
This article is about inverse spectral problems for hyperbolic surfaces and in particular how length spectra relate to the geometry of the underlying surface. A quantitative answer is given to the following: how many questions do you need to ask a length spectrum to determine it completely? In answering this, a quantitative upper bound is given on the number of isospectral but non-isometric surfaces of a given genus.
\vspace{1cm}

\section{Introduction}

This article is about inverse spectral problems for hyperbolic surfaces and in particular for how length spectra relate to the geometry of the underlying surface. The idea is to understand how much information about a surface $X$ is contained in the set of lengths (with multiplicities) of all of its closed geodesics $\Lambda(X)$.

The main goal here is to provide quantitive results which only depend on the topology of the underlying surface to several problems. Here is the first one:

{\it {\bf Problem 1:} How many {\it questions} must one ask a length spectrum to determine it completely?}

There are variations on this problem depending on what type of question one allows. Here we'll only allow one type of question. You're allowed to constitute a (finite) list of values you're not interested in and then ask for the smallest value not on the list. 

The approach taken to tackle this problem leads to another one:

{\it {\bf Problem 2:} How many non-isometric isospectral surfaces of genus $g$ can their be?}

This last problem is a well-studied problem in inverse spectral theory and for closed surfaces, via the Selberg trace formula, length isospectrality is equivalent to Laplace isospectrality. 

There are different known techniques to produce examples of non-isometric isospectral hyperbolic surfaces. That such surfaces exist might seem surprising, and in many ways they are an extremely rare phenomenon. McKean \cite{McKean} showed that at most a finite number of other surfaces can be isospectral but non-isometric to a given hyperbolic surface and Wolpert \cite{Wolpert} showed that outside of a certain proper real analytic subvariety of the moduli space $\M_g$ of genus $g\geq 2$ surfaces , all surfaces are uniquely determined by their length spectrum. The first examples were due to Vign\'eras \cite{Vigneras1,Vigneras2}. A multitude examples, not only in the context of surfaces, were found using a technique introduced by Sunada \cite{Sunada}, namely those of Gordon, Webb and Wolpert \cite{GordonWebbWolpert}, who also produced isospectral but non-isometric planar domains, answering a famous question of Kac \cite{Kac}. 

As one might expect, the size of sets of isospectral non-isometric surfaces (isospectral sets) can grow with the topology of the underlying surface. In particular Brooks, Gornet and Gustafson \cite{BrooksGornetGustafson} showed the existence of isospectral sets of cardinality that grows like $g^{c\log(g)}$ where $g$ is the genus and $c$ is an explicit constant. This improved previous results of Tse \cite{Tse}. It would be difficult to review all of the literature, but the point is that a lot of effort has gone into finding examples of isospectral sets, and in particular into producing lower bounds to Problem 2 above.

On the other end, in addition to theorems of McKean and Wolpert, the only quantitative upper bounds seem to be due to Buser \cite[Chapter 13]{BuserBook}. Buser shows that the cardinality of isospectral sets is bounded above by $e^{720g^2}$. The proof is based on a number of ingredients, one of them being a bound on lengths of geodesics in pants decompositions (so-called Bers constants, see \cite{Bers}). Since Buser's theorem, there have been improvements as to what is known about short pants decompositions which in turn lead to direct improvements of the $e^{720g^2}$ upper bound. However, a direct application of Buser's techniques will still give bounds on the order of $e^{Cg^2}$ for some constant $C$. To improve this bound significantly requires using something else and this is one of the goals of this paper. 

Concerning Problem 1, the main result is the following theorem.
\begin{theorem}\label{thm:mainquestion}
There exists an explicit universal constant $A$ such that the following holds. The length spectrum $\Lambda$ of a surface of genus $g$ can be determined by at most $g^{Ag}$ questions. 
\end{theorem}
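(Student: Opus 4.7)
The strategy is to identify a threshold $L=L(g)$ such that knowing the length spectrum up to $L$ determines the spectrum completely, and then count how many questions it takes to extract all the spectrum values at most $L$.

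First, I would establish such a threshold. By Bers' theorem every closed hyperbolic surface of genus $g$ admits a pants decomposition with all curves of length at most $B_g$, and known bounds give $B_g$ of at most linear order in $g$. Knowing the spectrum up to a suitable $L(g)$ should allow one to isolate, among the initial values, the lengths of some pants decomposition of $X$ together with enough lengths of dual curves to constrain the twist parameters. Fenchel--Nielsen coordinates then pin down $X$ up to finitely many isometry classes; by the answer to Problem 2 (the upper bound on isospectral sets, also proved in this paper), the cardinality of this finite set is controlled uniformly in $g$. Since all surfaces in such a set share the same $\Lambda$, the full spectrum is determined by its initial segment below $L(g)$.

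Second, I would count questions. Under the stipulated format---one maintains a list of values already accounted for and queries the smallest remaining value---each question consumes exactly one geodesic length, so the number of questions needed equals the number of closed geodesics of length at most $L(g)$. A uniform bound on this count in terms of $g$ and $L$, via counting lemmas in the style of the prime geodesic theorem or a direct hyperbolic area argument yielding $\#\{\gamma : \length(\gamma)\le L\} \le C(g)\, e^L$, then provides the final estimate. Choosing $L(g)$ of order $A\, g\log g$ converts $e^L$ into $g^{Ag}$.

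The central obstacle is the first step: quantifying how the Fenchel--Nielsen coordinates depend on a bounded initial segment of $\Lambda$. Identifying pants-curve lengths requires distinguishing them combinatorially from other short geodesics without a priori knowledge of the decomposition, and recovering the twist parameters through dual curves requires trace identities expressing dual-curve lengths in terms of pants lengths and twists. I expect the combinatorial choices inherent in pairing initial-spectrum values with pants and dual curves to be the dominant contribution to the $g^{Ag}$ factor, while the asymptotic count of geodesics up to $L(g) \sim g\log g$ contributes only an $e^{O(g\log g)} = g^{O(g)}$ factor; balancing these two combinatorial costs with the Bers constant and the isospectral-set bound of Problem 2 is the core task.
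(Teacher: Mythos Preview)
Your proposal has a genuine gap at its core. You assert that there is a threshold $L(g)$, depending only on $g$, such that the initial segment of $\Lambda$ up to $L(g)$ determines the full spectrum. No such threshold exists. If $X$ has a very short curve $\alpha$ of length $\varepsilon$, then by the collar lemma every closed geodesic crossing $\alpha$ has length at least on the order of $2\log(1/\varepsilon)$; the twist parameter along $\alpha$ is invisible to any curve disjoint from $\alpha$, so it cannot be read off from lengths below any function of $g$ alone. The paper makes this point explicitly when contrasting Theorem~\ref{thm:mainquestion} with Buser's constant $B_{g,\varepsilon}$, which necessarily depends on a systole lower bound $\varepsilon$. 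Your plan to ``count questions as the number of closed geodesics of length at most $L(g)$'' therefore cannot work as stated: the questions needed to pin down twists along short curves are not located in any fixed initial segment, and must instead be \emph{targeted} questions whose excluded list $\LL$ depends on what has already been learned about the thick part. This is precisely why the paper separates the argument into a thick phase (a polynomial number of initial questions) and a thin phase (Section~\ref{sec:thin}, one adaptive question per short curve).

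There is a second gap in your logic even granting the first step. You write that the initial segment pins down $X$ up to a finite set and then invoke the bound on isospectral sets to say ``all surfaces in such a set share the same $\Lambda$''. But the finite set you produce consists of surfaces whose \emph{initial} spectra agree, not surfaces whose full spectra agree; Theorem~\ref{thm:maincount} bounds the latter, not the former. The candidates may well have distinct length spectra, and further questions are required to discriminate among them. The paper handles this with an additional round of at most $M-1$ questions, each separating two candidate spectra at their first point of disagreement.

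Finally, even for the thick part your choice of pants decompositions is suboptimal: Bers' constant is at least of order $\sqrt{g}$, and the paper notes that plugging improved Bers bounds into Buser's scheme still yields $e^{Cg^2}$ rather than $g^{Cg}$. The $\log g$ length scale that makes the argument work comes from the curve and chain systems of Section~\ref{sec:cc}, not from pants decompositions.
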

The constant $A$ can be taken to be $154$, which is not in any way sharp. Note that in particular a finite number of lengths determine the full spectrum, a well known fact which was somehow at the origin of this project and the above result is one way of quantifying it.

We note there is another version of this finiteness, again due to by Buser \cite[Theorem 10.1.4]{BuserBook}, used to provide a simpler (or at least a different) proof of Wolpert's theorem mentioned previously. The result is that there is a constant $B_{g,\varepsilon}$ that only depends on the genus $g$ of the surface and a lower bound $\varepsilon$ on the systole such that the lengths of length less than $B_{g,\varepsilon}$ determine the full length spectrum. (The systole is the length of a shortest closed geodesic of the surface.) The proof is based on analyticity but there aren't any known quantifications of $B_{g,\varepsilon}$. Note that it must depend on $g$ and $\varepsilon$, unlike the quantification of Theorem \ref{thm:mainquestion}. It is also interesting to compare these results to other rigidity phenomena for length spectra, such as result of Bhagwat and Rajan \cite{BhagwatRajan}, which states that, for even dimensional compact hyperbolic manifolds, two length spectra are either equal or they differ by an infinite number of values. 

The proof of Theorem \ref{thm:mainquestion} involved determining possible isometry classes of surfaces with given length spectra. In particular, it is necessary to count the size of isospectral sets.

\begin{theorem}\label{thm:maincount}
There exists a explicit universal constant $B$ such that the following holds. Given $X\in M_g$ there are at most $
g^{Bg}$ surfaces in $\M_g$ isospectral to $X$.
\end{theorem}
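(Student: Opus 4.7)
The plan is to enumerate each isometry class $Y \in \M_g$ isospectral to $X$ by the combinatorial and metric data of a short pants decomposition, and then to bound the number of such data sets consistent with $\Lambda(X)$ by $g^{Bg}$. The overall scheme refines Buser's framework, in which the bound $e^{720g^2}$ arises because each of several stages is estimated exponentially in $g$; here each stage must be pushed down to polynomial-in-$g$ complexity.

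First, I would invoke a Bers-type theorem with a polynomial Bers constant $L_g = O(g)$: every $Y \in \M_g$ admits a pants decomposition $\{\gamma_1,\ldots,\gamma_{3g-3}\}$ with $\length_Y(\gamma_i) \leq L_g$. Since $\Lambda(Y) = \Lambda(X)$, each $\length_Y(\gamma_i)$ lies in the finite set $\Lambda(X) \cap (0, L_g]$. The data describing the pair $(Y, \PP_Y)$ therefore comprises a topological type of pants decomposition on $\Sigma_g$ (of which there are at most $g^{cg}$ by counting trivalent multigraphs on $2g-2$ vertices), a tuple of $3g-3$ lengths drawn from $\Lambda(X) \cap (0, L_g]$, and $3g-3$ twists $\tau_i \in \R/\length_Y(\gamma_i)\Z$. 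Each twist is further constrained by requiring the length on $Y$ of an explicit dual curve $\delta_i$ crossing $\gamma_i$ to lie in $\Lambda(X)$; standard hyperbolic trigonometric identities make the twist-to-length map piecewise monotone with $O(1)$ branches, so the admissible twist values form a finite set bounded in size by the number of short values of $\Lambda(X)$. Multiplying the three contributions (topological types, length choices, twist choices) yields the desired $g^{Bg}$ bound provided each factor is at most $g^{O(g)}$.

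The main obstacle is controlling the number $N$ of simple closed geodesics of length at most $L_g$ on $X$: the straightforward counting estimate for all closed geodesics gives $N \sim e^{L_g} = e^{O(g)}$, which yields $e^{O(g^2)}$ after raising to the $3g-3$ power and is precisely the source of Buser's bound. Pushing $N$ down to something like $g^{O(1)}$, or alternatively bounding the number of short pants decompositions on $X$ directly by $g^{O(g)}$ without going through $N^{3g-3}$, is the core difficulty. A promising route is a recursive one: peel off pairs of pants from $X$ one at a time, with only polynomially-many choices of short boundary at each step, and carry the decomposition to depth $2g-2$ to arrive at $g^{O(g)}$ total configurations. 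A parallel argument should control the twist factor, and combining everything would produce the stated $g^{Bg}$ bound with an explicit constant $B$.
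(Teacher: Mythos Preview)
Your proposal correctly identifies the bottleneck in Buser's argument, but the route you sketch does not close the gap, and the paper's actual mechanism is quite different.

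First, the recursive pants-peeling idea does not deliver polynomially many choices per step. Even the \emph{shortest} curve in a Bers pants decomposition can have length on the order of $\sqrt{g}$ (and there are surfaces whose systole already grows like $\log g$), so at a given step the number of candidate short boundary curves is governed by $e^{c\sqrt{g}}$ rather than $g^{O(1)}$; iterating $2g-2$ times still lands you well above $g^{O(g)}$. The paper's resolution is not to sharpen the count of pants curves but to abandon pants decompositions altogether in favor of \emph{curve and chain systems} (Section~\ref{sec:cc}): a set $\Gamma$ of disjoint simple curves together with associated ``chains'' $\Gamma_\A$ whose lengths and twists determine $X$, and for which one can prove (Theorem~\ref{thm:cclength}) length bounds of order $\log g$ rather than $\sqrt{g}$ or $g$. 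With $L=O(\log g)$, the crude geodesic count $(g-1)e^{L+6}$ becomes polynomial in $g$, and raising to the $O(g)$-th power gives $g^{O(g)}$ directly.

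Second, your treatment of twists has a gap precisely where the paper expends the most separate effort. For curves $\gamma$ of length below $2\arcsinh(1)$, any transversal $\delta_\gamma$ must cross the collar and is therefore arbitrarily long; its length is \emph{not} among the ``short values of $\Lambda(X)$'', so your twist-counting step breaks down for the thin part. The paper handles this via an entirely different analysis (Section~\ref{sec:thin}): it studies the topological type of the shortest transversal to each short curve, bounds the number of possibilities using Przytycki's theorem on arcs pairwise intersecting at most once (type~(I)) and a McShane-identity estimate on embedded pants (type~(II)), and obtains at most $8^{k_0+1}g^{12k_0}$ isometry types compatible with a given thick part. Both ingredients---the $\log g$ curve-and-chain bound and the thin-structure analysis---are essential and are missing from your outline.
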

As before, the constant $B$ can be taken to be $154$. Although this considerably lowers the upper bound, there is still a significant discrepancy between the lower and upper bounds ($g^{c\log(g)}$ vs. $g^{Cg}$). Finding the true order of growth seems like a challenging problem.

One of the main ingredients for obtaining these quantifications is to find a different parameter set for the moduli space of surfaces. Buser's approach uses Fenchel-Nielsen coordinates and pants decompositions. The problem is that even the shortest pants decompositions curves can get long (as least on the order of $\sqrt{g}$). In fact, even the shortest closed geodesic can get long too as there are families of surfaces whose systoles grow on the order of $\log(g)$. The parameter set proposed here is also based on a set of curves whose geometric data determine the surface. The set of curves has two different components: $\Gamma$ (the curves) and $\Gamma_\A$ (the chains). The set $\Gamma$ is a set of disjoint simple curves and $\Gamma_\A$ is a set of curves somehow attached to the elements of $\Gamma$ via a set of arcs $\A$ (see Section \ref{sec:cc} for a proper definition). The first main feature of a curve and chain system $\Gamma,\Gamma_\A$ is that lengths and twists of elements of $\Gamma$ and the lengths of elements of $\Gamma_\A$ determine a surface in moduli space (and in fact in Teichm\"uller space although that's not the point here). 

Their second main feature - and this is what truly distinguishes them from pants decompositions - is that we can bound their lengths by constants on the order of $\log(g)$.
\begin{theorem}\label{thm:cclength}
Any $X\in \M_g$ admits a curve and chain system $\Gamma, \Gamma_\A$ satisfying
$$
\ell(\gamma) < 2 \log(4g)
$$
for all $\gamma \in \Gamma$ and
$$
\ell(\gamma_a) < 8 \log(4g)
$$
for all $a \in \A$. 
\end{theorem}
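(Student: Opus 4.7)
The tool driving the argument is the classical area-based disk embedding estimate: on a finite-area hyperbolic surface $Y$, if an open disk of radius $r$ embeds about a basepoint it contributes area $2\pi(\cosh r - 1)$. Consequently, if $2\pi(\cosh r - 1) > \area(Y)$ then no such disk embeds anywhere, and one extracts a nontrivial closed geodesic of length at most $2r$. Applied to $X$ with $\area(X) = 4\pi(g-1)$, this produces a simple closed geodesic of length at most $2\arccosh(2g-1) < 2\log(4g)$.

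\textbf{Step 1: the curves $\Gamma$.} I would construct $\Gamma$ greedily. Pick a shortest simple closed geodesic $\gamma_1$ in $X$; by the above $\ell(\gamma_1) < 2\log(4g)$. Cut $X$ along $\gamma_1$ to obtain a (possibly disconnected) bordered hyperbolic surface of the same total area but strictly lower topological complexity. The same embedding estimate, applied inside the cut surface, yields a next simple closed geodesic $\gamma_2$ disjoint from $\gamma_1$ satisfying $\ell(\gamma_2) < 2\log(4g)$. Iterate. Because topological complexity strictly decreases at each step, the procedure terminates in finitely many steps; one stops as soon as the combinatorial conditions required by a curve and chain system (Section \ref{sec:cc}) are met, and every curve added to $\Gamma$ satisfies the claimed length bound.

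\textbf{Step 2: the chains.} For each $\gamma \in \Gamma$ I would produce the associated arcs $a \in \A$ via an area argument on half-collar neighborhoods. The half-collar of width $w$ about $\gamma$ is embedded with area $\ell(\gamma) \sinh(w)$, so once $\ell(\gamma) \sinh(w)$ exceeds $4\pi(g-1)$ the half-collar must overlap itself, producing an orthogeodesic arc $a$ of length at most $2w$ from $\gamma$ back to $\Gamma$. The chain $\gamma_a$ is then the simple closed geodesic freely homotopic to the concatenation of $a$ with the appropriate sub-arc(s) of $\Gamma$, so $\ell(\gamma_a) \leq 2w + \ell(\gamma) + \ell(\gamma')$, where $\gamma' \in \Gamma$ is the other endpoint curve (possibly equal to $\gamma$). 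Substituting $w = \arcsinh(4\pi(g-1)/\ell(\gamma))$ and combining with the curve length bounds from Step 1 yields $\ell(\gamma_a) < 8\log(4g)$ in the generic regime, with room to spare in the exponent.

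\textbf{Main obstacle.} The main difficulty is twofold. First, the greedy procedure must be arranged so that the resulting $(\Gamma,\A)$ is a genuine curve and chain system rather than a mere collection of short objects; $\Gamma$ and the arcs $\A$ must combine in the combinatorial pattern prescribed in Section \ref{sec:cc}, and care is needed when cut components have small topology. Second, the arc estimate degenerates when $\ell(\gamma)$ is very small: the area bound gives $w = \arcsinh(4\pi(g-1)/\ell(\gamma))$, which grows unboundedly as $\ell(\gamma) \to 0$. This is the collar lemma regime and it requires a separate analysis: very short $\gamma$ are surrounded by wide embedded collars, so any chain representative passing through such collars inherits extra length. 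One must compensate either by excluding such $\gamma$ from $\Gamma$ or by choosing chains that exploit the collar geometry, balancing the trade-off between collar width and half-collar overlap to keep $\ell(\gamma_a)$ of order $\log g$.
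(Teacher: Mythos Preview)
Your Step 2 has a genuine gap. The half-collar overlap argument yields \emph{one} short orthogeodesic from $\gamma$ back to $\Gamma$ once the collar self-intersects, but a curve and chain system needs $6g-6$ arcs that together cut $X\setminus\Gamma$ into right-angled hexagons. Nothing in your sketch produces the full arc system, and pushing the half-collar farther out to harvest additional arcs destroys the length bound. The paper's mechanism is different and is really the heart of the proof: one first enlarges $\Gamma$ until every point of (the thick part of) $X$ lies within $\log(4g)$ of some curve of $\Gamma$, and then takes as $\A$ the arcs dual to the Voronoi decomposition of $X\setminus\Gamma$ relative to the curves of $\Gamma$. Duality to the cut locus automatically gives a maximal arc system (a hexagon decomposition), and the Voronoi-cell diameter bound gives $\ell(a)\le 2\log(4g)$ for \emph{every} arc at once, whence $\ell(\gamma_a)\le 2\ell(a)+\ell(\gamma_1)+\ell(\gamma_2)<8\log(4g)$.

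Step 1 also needs repair, and its stopping criterion is wrong for the above to work. After cutting along previously chosen curves, the disk-embedding estimate still produces a short based loop $\delta_x$, but its core geodesic may coincide with a boundary curve you have already cut along; the raw area bound does not rule this out. The paper fixes this with a collar-distance estimate (Lemma~\ref{lem:collardistance}): if $x$ is chosen at distance at least $\log(4g)$ from all existing curves and from the collars of the intrinsically short curves $\Gamma_0$, then the core geodesic of $\delta_x$ is forced to be genuinely new and to lie in the current piece. The correct stopping rule is therefore not ``the combinatorics of a curve and chain system are met'' (that is satisfied already by a single curve) but rather ``every point lies within $\log(4g)$ of $\Gamma$'', which is exactly what the Voronoi step requires. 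Your short-curve obstacle is real and is handled as you anticipate: the collars of $\Gamma_0$ are excised at the outset and the entire construction is carried out on the thick part.
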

This theorem is a type of Bers' constant theorem for curve and chain systems and the examples of surfaces with large systoles show that the order of growth is not improvable. The $\log(g)$ order of growth is an essential ingredient for obtaining the quantification in Theorem \ref{thm:maincount}. We note that Theorems \ref{thm:maincount}, \ref{thm:mainquestion} and \ref{thm:cclength} all remain true for complete finite area surfaces when replacing the dependence on $g$ with a dependence on the area but, in tune with most of the literature on the subject, we focus on closed surfaces.

Any approach to bounding the cardinality of isospectral sets requires more than just short curves. This is because when surfaces have very thin parts, lengths of curves that transverse these parts are long and necessary to identify the moduli of the thin parts. Identifying possible geometries of the thin part of surfaces requires another type of approach. Buser's approach to this is to treat "very" short curves differently from just "somewhat" short curves and in both cases provides involved, although mostly elementary, arguments. The approach taken here is quite different. 

First of all, we only differentiate between long and short curves where short means those of length less than $2 \arcsinh(1)$. This constant is a natural quantity when dealing with simple closed geodesics because is the exact value beyond which, on any complete hyperbolic surface, one can guarantee that any closed geodesic that intersects it is of greater length. To deal with a short curve, we first examine the topological types of the shortest curve transversal to it. Depending on their type, we employ two strategies. The first uses a recent theorem of Przytycki \cite{Przytycki} which bounds the number of arcs that pairwise intersect at most once. The second strategy, perhaps somewhat unexpectedly, uses McShane type identities \cite{McShane, Mirzakhani, TanWongZhang} as a measure on embedded pairs of pants. Estimates related to the lengths of curve and chain systems are then plugged in resulting on bounds on the size of isospectral sets.

{\bf Organization.} A preliminary section mostly contains standard results, sometimes adapted slightly for our purposes. This is followed by a full section on curve and chain systems culminating in the proof of Theorem \ref{thm:cclength}. Section \ref{sec:thin} is then dedicated to the different approaches used to deal with thin structures (although certain aspects concerning McShane identities are delayed to the appendix because they are of somewhat different nature to the rest). The results of the previous two sections are put together in Section \ref{sec:iso} to prove Theorem \ref{thm:maincount}. Finally, Section \ref{sec:question} is mainly the proof of Theorem \ref{thm:mainquestion}. 

{\bf Acknowledgements.}
Thanks are due to a number of people I have discussed aspects of this paper with including Ara Basmajian, Peter Buser, Chris Judge, Youngju Kim, Bram Petri and Binbin Xu. Some of the work on the paper was done during a visit to the Korean Institute for Advanced Study and I thank the institute members for their hospitality.

\section{Preliminaries}

The moduli space $\M_g$ is the space of complete hyperbolic structures on a closed orientable topological surface $\Sigma_g$ of genus $g\geq 2$ up to isometry. In order to describe $\M_g$, it is often useful to use lengths of closed geodesics. Given an element of $\xi \in\pi_1(\Sigma_g)$ and any $X\in \M_g$, there is a unique closed geodesic $\gamma_\xi\subset X$ in the free homotopy class of $\xi$. In that way, for every $\xi$, one gets a function $\ell_{\cdot}:\M_g \to \R^{+}$ where to any $X$ one attributes the value 
$$
\ell_X(\xi) := \ell(\gamma_\xi)
$$
This one-to-one correspondence between elements of the fundamental group and closed geodesics is very useful and in general we won't distinguish between a homotopy class and its geodesic realization. In fact, unless specifically stated, when the surface $X$ is clear from the context, $\ell(\gamma)$ will generally mean the length of the closed geodesic in the homotopy class of $\gamma$. 

A closed geodesic is primitive if is not the iterate of a another closed geodesic. A primitive closed geodesic is called simple if doesn't have any self-intersection points. We denote by
$$\Lambda(X):=\{\ell_1 \leq \ell_2 \leq \hdots \}$$
the ordered (but unmarked) set of lengths of primitive closed geodesics of $X$, counted with multiplicity. This means that if $X$ has $n$ closed geodesics of length $l$, $l$ will appear $n$ times. As $X$ is of finite type, the set $\Lambda(X)$ is always a discrete set. As we'll be trying to say things about a surface that has a given spectrum, we'll sometimes use $\Lambda$ by itself to denote a length spectrum without knowledge of the underlying surface. We note that everything we do could be done for the full length spectrum instead (thus including the lengths of non-primitive elements) as one is determined by the other, but we use primitive here for convenience. We say that $X$ and $Y$ are isospectral if $\Lambda(X) = \Lambda(Y)$. 

Cutting and pasting techniques will be used throughout the paper. For this, the following convention will be used. Given a simple closed geodesic $\alpha$ of a surface $X$, we can remove $\alpha$ from $X$. The result is an open surface (possibly disconnected) whose geometric closure has two boundary curves. We denote this closed manifold with boundary $X\setminus \alpha$. Similarly, we denote $X \setminus \mu$ for the corresponding geometric closure when $\mu$ is a geodesic multicurve.

The following result \cite{Keen}, see \cite[Thm. 4.1]{BuserBook} for this version, will be useful for our purposes.
\begin{lemma}[Collar lemma]\label{lem:collar}
Given such two disjoint simple closed geodesics $\gamma_1, \gamma_2 \subset X$, their collars
$$\CC(\gamma_i):= \{x\in X : d_X(x,\gamma_i)\leq w_i\}$$
of widths respectively 
$$
w_i:=\arcsinh\left( \frac{1}{\sinh(\sfrac{\ell(\gamma_i)}{2})}\right)
$$
for $i=1,2$ are embedded cylinders and disjoint.
\end{lemma}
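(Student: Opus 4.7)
The plan is to pass to the universal cover $\pi \colon \Hyp^2 \to X$ and reduce both claims (embedding and disjointness of collars) to a single trigonometric inequality between disjoint complete geodesics in the hyperbolic plane.

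\textbf{Setup and reduction.} Fix a lift $\tilde\gamma_i \subset \Hyp^2$ of each $\gamma_i$ and let $T_i$ denote the primitive hyperbolic deck transformation preserving $\tilde\gamma_i$, with translation length $\ell(\gamma_i)$. The collars $\CC(\gamma_1), \CC(\gamma_2)$ are embedded cylinders and mutually disjoint in $X$ if and only if, for every pair of distinct lifts $\tilde\delta, \tilde\delta'$ of $\gamma_1 \cup \gamma_2$, the closed $w$-neighborhoods of $\tilde\delta$ and $\tilde\delta'$ (with the width matching the curve each lifts) have disjoint interiors in $\Hyp^2$. Since each $\gamma_i$ is simple and $\gamma_1 \cap \gamma_2 = \emptyset$, any two such distinct lifts are themselves disjoint complete geodesics, so each pair has a well-defined finite perpendicular distance $d$.

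\textbf{Key trigonometric inequality.} I would establish the following: if $\tilde\alpha, \tilde\beta$ are disjoint complete geodesics in $\Hyp^2$ at perpendicular distance $d$, and $T$ is a hyperbolic isometry stabilizing $\tilde\alpha$ with translation length $\ell$ such that $T\tilde\beta$ is disjoint from $\tilde\beta$, then
$$
\sinh\!\left(\tfrac{d}{2}\right)\sinh\!\left(\tfrac{\ell}{2}\right) \geq 1.
$$
The proof considers the right-angled hyperbolic quadrilateral cut out by $\tilde\alpha$, $\tilde\beta$, $T\tilde\beta$, and the two common perpendiculars of length $d$ from $\tilde\beta$ and $T\tilde\beta$ to $\tilde\alpha$, whose feet are separated by exactly $\ell$ along $\tilde\alpha$ by $T$-equivariance. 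Working in Fermi coordinates around $\tilde\alpha$ and applying the standard identity for Saccheri/Lambert quadrilaterals in $\Hyp^2$ gives the inequality (which rearranges to $d/2 \geq \arcsinh(1/\sinh(\ell/2))$).

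\textbf{Application to the two cases.} For two distinct lifts $\tilde\delta, \tilde\delta'$ of the same curve $\gamma_i$, take $\tilde\alpha = \tilde\delta$ and $T = T_i$; simplicity of $\gamma_i$ guarantees that $T\tilde\delta'$, which is again a lift of $\gamma_i$, is disjoint from $\tilde\delta'$, and the inequality yields $d \geq 2w_i$. For lifts $\tilde\gamma_1, \tilde\gamma_2$ of the two distinct curves, I would argue by contradiction: if $\CC(\gamma_1)$ and $\CC(\gamma_2)$ overlapped, the common perpendicular between suitable lifts would have length $d < w_1 + w_2$; splitting this perpendicular at the point dividing it in ratio $w_1 : w_2$ and applying the inequality twice (once with $(\tilde\alpha, T) = (\tilde\gamma_1, T_1)$ to bound the $\gamma_1$-side and once with $(\tilde\gamma_2, T_2)$ to bound the $\gamma_2$-side) yields a contradiction, using the convexity of distance-to-geodesic in $\Hyp^2$ to combine the two one-sided bounds additively.

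\textbf{Main obstacle.} The delicate point is the two-curves case: a single application of the trigonometric lemma only gives $d \geq \max(w_1, w_2)$, whereas we need $d \geq w_1 + w_2$. Handling this cleanly requires either the additive splitting argument above or a doubling trick that reflects the configuration across a horocycle/equidistant curve so as to turn the two-curve problem into a single-curve one. Verifying that the splitting really produces two disjoint translated geodesics each satisfying the hypothesis of the key lemma is the step that requires the most care.
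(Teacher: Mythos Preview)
The paper does not prove this lemma; it is quoted as a known result with references to Keen and to Buser's book, so there is no proof here to compare yours with. That said, your key trigonometric inequality is false as stated. Take $\tilde\alpha$ to be the imaginary axis in the upper half-plane, $T(z)=e^{\ell}z$, and $\tilde\beta$ the geodesic with endpoints $1$ and $b$ for any $1<b<e^{\ell}$. Then $T\tilde\beta$ has endpoints $e^{\ell}$ and $e^{\ell}b$ and is disjoint from $\tilde\beta$, while a direct computation gives $\sinh d = 2\sqrt{b}/(b-1)$; letting $b\to e^{\ell}$ yields $\sinh d \to 1/\sinh(\ell/2)$, so $d$ can be arbitrarily close to $w$ and $\sinh(d/2)\sinh(\ell/2)$ is then strictly less than $1$. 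Part of the problem is that the figure you describe is not a quadrilateral: since $\tilde\beta$ and $T\tilde\beta$ are ultraparallel they do not meet, and the region bounded by $\tilde\alpha$, $\tilde\beta$, $T\tilde\beta$ and the two common perpendiculars is a right-angled hexagon (a pentagon after exploiting the symmetry). The pentagon identity that actually comes out is $\sinh d\,\sinh(\ell/2)=\cosh(c/2)\geq 1$, which only gives $d\geq w$.

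This weaker bound is insufficient even in the one-curve case: for $\CC(\gamma_i)$ to be an embedded cylinder one needs $d(\tilde\gamma_i,g\tilde\gamma_i)\geq 2w_i$ for every deck transformation $g\notin\langle T_i\rangle$, not merely $\geq w_i$. Your ``Main obstacle'' paragraph appears to already be using the correct weaker inequality (you say a single application gives $d\geq\max(w_1,w_2)$, whereas your stated inequality would give $d\geq 2\max(w_1,w_2)\geq w_1+w_2$ and there would be nothing left to do), so the proposal is internally inconsistent. What the bare $\Hyp^2$ argument misses is that the second lift is \emph{also} the axis of a translation of length $\ell_i$, and one must use both translations simultaneously (equivalently, the discreteness of the group they generate) to recover the missing factor of two; the ``splitting the perpendicular'' idea you sketch does not supply this. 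The standard proof (Buser, following Keen) sidesteps the difficulty by first extending $\gamma_1,\gamma_2$ to a pants decomposition and then checking disjointness of the half-collars inside each pair of pants via an elementary right-angled hexagon computation.
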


We can compute the lengths of the boundary curves of the collar, which are {\it not} geodesic, as follows.
\begin{lemma}
The boundary lengths of the collar $\CC(\gamma)$ for a simple closed geodesic $\gamma$
are both equal to 
$$
\ell(\gamma)\coth\left(\frac{\ell(\gamma)}{2}\right)
$$
\end{lemma}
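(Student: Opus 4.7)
The plan is to identify the collar $\CC(\gamma)$ with a standard hyperbolic cylinder and then use the explicit formula for the length of an equidistant curve at a prescribed distance from a geodesic core.

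First, I would recall that the collar $\CC(\gamma)$ is isometric to a quotient of a strip in $\Hyp^2$ by the hyperbolic translation of length $\ell(\gamma)$ along a geodesic, yielding a flat model where the core is the geodesic and the two boundary components are equidistant curves (hypercycles) at hyperbolic distance $w = \arcsinh(1/\sinh(\ell(\gamma)/2))$ from the core. This reduction is standard and immediate from the construction used in the collar lemma.

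Next, I would invoke (or quickly re-derive) the standard formula that an equidistant curve at hyperbolic distance $w$ from a geodesic $\gamma$ of length $\ell(\gamma)$, viewed inside the corresponding hyperbolic cylinder, has length
\[
\ell(\gamma)\cosh(w).
\]
This follows either by lifting an arc of length $\ell(\gamma)$ of the equidistant curve to the universal cover and using the formula for the distance between two parallel hypercycle arcs, or by noting that in Fermi coordinates around $\gamma$ the metric is $dr^2 + \cosh^2(r)\, dt^2$, so the curve $r=w$ over one period has length $\ell(\gamma)\cosh(w)$.

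Finally, I would substitute $w = \arcsinh\bigl(1/\sinh(\ell(\gamma)/2)\bigr)$ and use the identity $\cosh(\arcsinh(x)) = \sqrt{1+x^2}$ to compute
\[
\cosh(w)=\sqrt{1+\frac{1}{\sinh^{2}(\ell(\gamma)/2)}}=\frac{\cosh(\ell(\gamma)/2)}{\sinh(\ell(\gamma)/2)}=\coth\!\left(\frac{\ell(\gamma)}{2}\right),
\]
where the middle equality uses $\cosh^2-\sinh^2=1$. Multiplying by $\ell(\gamma)$ gives the claimed boundary length. There is no real obstacle here; the only step requiring a small amount of care is the first one, i.e.\ justifying that the collar is genuinely isometric to the flat model so that the Fermi-coordinate length formula applies, but this is guaranteed precisely by the embeddedness provided by Lemma \ref{lem:collar}.
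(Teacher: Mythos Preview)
Your proof is correct and follows essentially the same approach as the paper: both use the fact that the equidistant curve at distance $r$ from a geodesic of length $\ell$ has length $\ell\cosh(r)$, substitute the collar width $w=\arcsinh(1/\sinh(\ell(\gamma)/2))$, and simplify to $\coth(\ell(\gamma)/2)$. The only difference is that you spell out the Fermi-coordinate justification and the $\cosh(\arcsinh(x))=\sqrt{1+x^2}$ step explicitly, whereas the paper compresses these into one line.
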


\begin{proof}
The length of the boundary of the set of points at distance $r$ from a curve of length $\ell$ in the hyperbolic plane is $\ell \cosh(r)$ so here, as everything is embedded in $X$, the length of both boundary curves are
$$
\ell (\gamma) \cosh\left( \arcsinh\left( \frac{1}{\sinh(\sfrac{\ell(\gamma)}{2})}\right) \right) = \ell(\gamma)\coth\left(\frac{\ell(\gamma)}{2}\right)
$$
as claimed.
\end{proof}

For a given surface $X$, the set of closed geodesics of length less than $2\arcsinh(1)$ will be denoted $\Gamma_0(X)$. Non-simple closed geodesics are all of length at least $2\arcsinh(1)$ (see \cite[Theorem 4.2.2]{BuserBook}) so $\Gamma_0(X)$ consists of simple closed geodesics. As a corollary of the collar lemma and of the previous lemma, we have the following about elements of $\Gamma_0(X)$. 

\begin{corollary}\label{cor:shortboundary}
Given distinct $\gamma_1,\gamma_2 \in \Gamma_0(X)$ their collars satisfy
$$\CC(\gamma_1)\cap \CC(\gamma_2) = \emptyset$$

Furthermore, if $\gamma \in \Gamma_0$, the boundary curves of its collar $\CC(\gamma)$ have length $b$ where
$$
2 \leq b \leq 2 \sqrt{2} \log(1+\sqrt{2})
$$
 \end{corollary}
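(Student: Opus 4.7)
The plan is to treat the two assertions separately.

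For the disjointness of collars, I would first show that any two distinct elements of $\Gamma_0(X)$ are disjoint as subsets of $X$; the collar disjointness then follows immediately from Lemma \ref{lem:collar}. Suppose $\gamma_1 \neq \gamma_2$ both lie in $\Gamma_0(X)$ and that they meet at some point $p$. Since $\ell(\gamma_1) < 2\arcsinh(1)$, the corresponding collar width satisfies
\[
w_1 \;=\; \arcsinh\!\left(\frac{1}{\sinh(\ell(\gamma_1)/2)}\right) \;>\; \arcsinh(1),
\]
because $\sinh(\ell(\gamma_1)/2) < 1$. The geodesic $\gamma_2$ cannot be entirely contained in $\CC(\gamma_1)$, which is a cylinder homotopy equivalent to $\gamma_1$ and therefore carries $\gamma_1$ as its unique closed geodesic. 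So both halves of $\gamma_2$ emanating from $p$ must leave $\CC(\gamma_1)$, each travelling at least distance $w_1$ to reach $\partial\CC(\gamma_1)$. This gives $\ell(\gamma_2) \geq 2w_1 > 2\arcsinh(1)$, contradicting $\gamma_2 \in \Gamma_0(X)$. Hence $\gamma_1$ and $\gamma_2$ are disjoint, and Lemma \ref{lem:collar} applies.

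For the boundary length bounds, the preceding lemma gives $b = f(\ell)$ where $f(\ell) := \ell\coth(\ell/2)$ and $\ell = \ell(\gamma) \in (0, 2\arcsinh(1))$. I would show $f$ is strictly increasing on $(0,\infty)$: setting $u = \ell/2$,
\[
\frac{d}{du}\bigl(2u\coth u\bigr) \;=\; \frac{\sinh(2u) - 2u}{\sinh^2(u)},
\]
which is positive for $u > 0$ since $\sinh(2u) > 2u$. The expansion $\coth(u) = 1/u + u/3 + O(u^3)$ then yields $\lim_{\ell\to 0^+} f(\ell) = 2$, giving the lower bound $b \geq 2$. At the upper endpoint, using $\arcsinh(1) = \log(1+\sqrt{2})$ together with $\sinh(\arcsinh(1)) = 1$ and $\cosh(\arcsinh(1)) = \sqrt{2}$, we compute
\[
f(2\arcsinh(1)) \;=\; 2\log(1+\sqrt{2}) \cdot \sqrt{2} \;=\; 2\sqrt{2}\log(1+\sqrt{2}),
\]
establishing the upper bound.

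The only real subtlety is the disjointness step: one has to argue cleanly that either $\gamma_2 \subset \CC(\gamma_1)$ (impossible) or the length estimate $\ell(\gamma_2) \geq 2w_1$ applies at an intersection point. Everything else reduces to monotonicity and evaluation at the endpoints of the relevant interval.
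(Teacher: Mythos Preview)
Your argument is correct and follows the same route as the paper, which simply invokes Lemma~\ref{lem:collar} for the first part and the monotonicity of $\ell\coth(\ell/2)$ for the second. You have filled in more detail than the paper supplies: in particular you explicitly verify that two curves in $\Gamma_0(X)$ must be disjoint before the collar lemma can be applied, whereas the paper treats this as immediate (and only states it as a consequence afterwards). The explicit derivative computation and endpoint evaluations are likewise just elaborations of what the paper asserts in one line.
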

\begin{proof}
The first statement follows directly from Lemma \ref{lem:collar}. The second statement follows from the monotonicity in $\ell$ of the function $\ell \coth\left(\frac{\ell}{2}\right)$ between $0$ and $2\arcsinh(1)$.
\end{proof}
In particular, geodesics in $\Gamma_0(X)$ are disjoint. Thus there are at most $3g-3$ closed geodesics of length at most $2\arcsinh(1)$. The other ones are more difficult to count, and the following lemma, based on a lemma of Buser \cite[Lemma 6.6.4]{BuserBook}, is a useful tool for this. Note the statement is not exactly the same but the proof contains all of the ingredients we'll need.

\begin{lemma}[Counting closed geodesics]\label{lem:countcurves}
Let $L>0$ and $X\in \M_g$. Then there are at most
$$
(g-1) \,e^{L+6}
$$
primitive closed geodesics of length most $L$.
\end{lemma}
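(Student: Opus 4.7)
I would follow the classical orbit-counting strategy of Buser \cite[Lemma 6.6.4]{BuserBook}: associate to each primitive closed geodesic a group element in $\pi_1(X)$ whose translation distance is controlled by $L$, and bound the number of such elements via a packing argument in the universal cover $\Hyp^2$.

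First, I would spread a finite set of basepoints $x_1,\dots,x_N \in X$ across the surface, forming a net so that every point of $X$ lies within some fixed small distance $r$ of at least one $x_i$. Since $\area(X)=4\pi(g-1)$ by Gauss--Bonnet, a standard packing argument allows $N$ to be taken linearly in $g-1$. For each $x_i$, fix a lift $\tilde x_i\in\Hyp^2$. If $\gamma$ is a primitive closed geodesic of length at most $L$ passing within distance $r$ of $x_i$, then the element $\phi_{\gamma,i}\in\pi_1(X,x_i)$ obtained by travelling from $x_i$ to the nearest point of $\gamma$, once around $\gamma$, and back, represents the conjugacy class of $\gamma$ and satisfies $d(\tilde x_i,\phi_{\gamma,i}(\tilde x_i)) \le L+2r$.

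Next, for each basepoint $x_i$ I would bound the number of group elements $\phi\in\pi_1(X)$ with $d(\tilde x_i,\phi(\tilde x_i)) \le L+2r$ by a packing bound: disjoint translates of a suitable Dirichlet fundamental domain around $\tilde x_i$ all sit inside a hyperbolic disk of radius $L+2r+\diam(F)$, so the count is at most $\area(B(L+2r+\diam(F)))/\area(F)$. Combined with $\area(B(R)) \le \pi e^R$ and $\area(F)=4\pi(g-1)$, this yields a bound of the shape $e^{L+O(1)}/(g-1)$ per basepoint. Summing over the $N=O(g-1)$ basepoints produces a total bound $(g-1)e^{L+O(1)}$, and a careful choice of $r$ and the packing constants should push the additive constant down to $6$.

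The main obstacle is that the naive packing argument degenerates at basepoints with small injectivity radius, i.e.\ those lying inside collars of short simple closed geodesics. Following Buser, I would handle this by placing the net points in the thick part of $X$ and arguing separately for primitive closed geodesics that lie in or cross a thin collar: within a collar the only embedded primitive closed geodesic is the core curve, and any primitive geodesic traversing a thin collar picks up a definite length from the traverse (forced by the collar lemma applied in the form of Corollary \ref{cor:shortboundary}), so these contributions are consistent with the overall estimate.
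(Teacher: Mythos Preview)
Your overall strategy---net of basepoints in the thick part, lift to $\Hyp^2$, orbit counting by area, separate treatment of short core curves---matches Buser's argument that the paper invokes. But the per-basepoint packing step is set up with the wrong object, and this is a genuine gap.

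You pack translates of the full Dirichlet domain $F$ and write the count as $\area(B(L+2r+\diam(F)))/\area(F)$, then claim this is $e^{L+O(1)}/(g-1)$. That would require $\diam(F)=O(1)$, which is false: $\diam(F)$ is at least on the order of $\log g$ for any closed genus-$g$ surface (since $\area(F)=4\pi(g-1)$ must fit inside a disk of radius $\diam(F)$), and is unbounded if $X$ has thin parts. So your per-basepoint bound is really $e^{L+\diam(F)+O(1)}/(g-1)$, which you cannot control. The correct move is to pack not translates of $F$ but translates of a \emph{small} embedded ball $B(\tilde x_i,\rho)$ with $\rho$ a universal constant (e.g.\ $\rho=\tfrac12\arcsinh 1$, available precisely because $x_i$ sits in the thick part). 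This gives a per-basepoint count of order
\[
\frac{\cosh(L+2r+\rho)-1}{\cosh(\rho)-1}\;\approx\; e^{L+O(1)},
\]
with \emph{no} $1/(g-1)$ factor. The factor of $g-1$ then enters exactly once, through the number $N$ of net points, yielding $(g-1)e^{L+O(1)}$ as desired. This is what Buser does and what the paper quotes; once you make this correction (and add the $3g-3$ short core curves separately, as the paper does), the computation goes through and the constant $6$ drops out.
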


\begin{proof}
The statement is not exactly the statement from \cite[Lemma 6.6.4]{BuserBook} but can be obtained by the proof as follows. 

There are at most $3g-3$ curves of length $2\arcsinh(1)$ and now we count the longer curves.

In the proof of \cite[Lemma 6.6.4]{BuserBook}, Buser shows that the number of closed geodesics that are {\it not} iterates of the curves of length less than $2\arcsinh(1)$ are bounded above by 
$$
\frac{\cosh(L+3r) - 1}{\cosh(r) -1} \frac{2(g-1)}{\cosh(\sfrac{r}{2})-1}
$$
where $r=\arcsinh(1)$. This bound is obtained by covering the thick part of the surface by balls of radius $\arcsinh(1)$ and by counting geodesic loops based in the centers of the balls above and using an area comparaison argument.

Now the total number of primitive curves is thus bounded by 
$$
3g-3 + \frac{\cosh(L+3r) - 1}{\cosh(r) -1} \frac{2(g-1)}{\cosh(\sfrac{r}{2})-1} = (g-1)\left( 3 + \frac{2\cosh(L+3r) - 2}{(\cosh(r) -1)(\cosh(\sfrac{r}{2})-1)}\right)
$$
and a straightforward calculus computation shows that this quantity is bounded above by
$$
(g-1)\,\, e^{L+6}
$$
for all $g>2$ and $L>0$ as desired.
\end{proof}

The above statement is in fact slightly weaker than the statement in \cite{BuserBook} but it will be exactly the statement we need as we're dealing with the primitive length spectrum. 

\begin{remark} The number of closed geodesics of length at most $L$ grows asymptotically like $\sfrac{e^L}{L}$, a result due to Huber \cite{Huber}, generalized to many different contexts by Margulis \cite{Margulis}. Although the bound in Lemma \ref{lem:countcurves} does not exhibit the correct order of growth, it has the advantage of being effective and working for {\it any} $L$. We also note that although the asymptotic growth of the number of simple closed geodesics is considerably slower (it is polynomial in $L$, a result of Mirzakhani \cite{MirzakhaniGrowth}), the above lemma, for sufficiently small values of $L$, provides an effective upper bound on the growth which is interesting even for the simple geodesics.
\end{remark}

Another ingredient will be the following result, due to Bavard \cite{Bavard}, which can be used to find short curves on surfaces. It will be a useful tool and so we state it as a lemma. The only thing we really need is an upper bound on the length of non-trivial curve that grows like $2 \log(g)$ which can be directly obtained by an area comparison argument. This result, remarkably, is sharp and so we state it precisely.
\begin{lemma}]\label{lem:systole} For any $X\in \M_g$ and any $x\in X$, there exists a geodesic loop $\delta_x$ based in $x$ such that 
$$
\ell(\delta_x) \leq 2 \arccosh \left(\frac{1}{2 \sin \frac{\pi}{12g-6}}\right) 
$$
\end{lemma}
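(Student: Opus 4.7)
The first step is the standard identity that the length of the shortest geodesic loop based at $x$ equals $2\,\injrad_X(x)$, where $\injrad_X(x)$ is the injectivity radius at $x$. On the one hand, any geodesic loop of length $L$ at $x$ splits at its midpoint into two distinct minimizing geodesics of length $L/2$ from $x$ to a common point, which forces $L \geq 2\,\injrad_X(x)$. On the other hand, by the very definition of $r := \injrad_X(x)$ there exist two distinct minimizing geodesics of length $r$ from $x$ to some common endpoint; concatenating them produces a noncontractible based loop at $x$ of length $2r$ (noncontractibility is checked by lifting to $\Hyp^2$ and using that the two lifts end at distinct preimages of the common endpoint), and its basepoint-preserving geodesic representative has length at most $2r$. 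Hence it suffices to establish $r \leq \arccosh\bigl(1/(2\sin(\pi/(12g-6)))\bigr)$.

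The second step is to translate this into a hyperbolic disk packing problem. Lift $x$ to $\tilde x \in \Hyp^2$; by the definition of $r$, the open hyperbolic disks of radius $r$ centered at the deck-transformation orbit of $\tilde x$ are pairwise disjoint, yielding an invariant packing of $\Hyp^2$. Its density, computed with $X$ as fundamental domain, is
$$
\frac{\area_{\Hyp^2}(D(\tilde x, r))}{\area(X)} \;=\; \frac{2\pi(\cosh r - 1)}{4\pi(g-1)} \;=\; \frac{\cosh r - 1}{2(g-1)}.
$$

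The key input is B\"or\"oczky's theorem on hyperbolic disk packings: the density of any packing of $\Hyp^2$ by disks of radius $r$ is at most the local density of the extremal configuration in which three such disks are mutually tangent with centers at the vertices of an equilateral triangle of side $2r$. A direct hyperbolic trigonometry computation in such a triangle shows that its vertex angle is $\alpha = 2\arcsin\bigl(1/(2\cosh r)\bigr)$ and that the local density equals $3\alpha(\cosh r - 1)/(\pi - 3\alpha)$. Combining with the above and simplifying the resulting inequality $(\cosh r-1)/(2(g-1)) \leq 3\alpha(\cosh r-1)/(\pi-3\alpha)$ produces $\alpha \geq \pi/(3(2g-1)) = 2\pi/(12g-6)$, and unwinding via $\sin(\pi/(12g-6)) \leq 1/(2\cosh r)$ yields the desired bound on $r$.

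The principal obstacle is B\"or\"oczky's packing density theorem itself; the rest is routine hyperbolic trigonometry and algebra. The combinatorial constant $12g-6$ arises naturally: the bound is saturated exactly by surfaces admitting a one-vertex geodesic triangulation by equilateral triangles, and by Euler's formula such a triangulation has $E = 6g-3$ edges meeting at the unique vertex, so $12g-6$ triangles around it. For the applications in this paper only the crude bound $\cosh r \leq 2g-1$ (immediate from the area comparison $\area(D(\tilde x, r)) \leq \area(X)$, with no packing machinery) is actually needed; the sharp form is stated for completeness.
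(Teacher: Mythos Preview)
Your argument is correct and is essentially Bavard's own proof of this sharp bound. The paper, however, does not supply a proof: it states the lemma as a result of Bavard and only sketches the weaker logarithmic bound via the crude area comparison $2\pi(\cosh r - 1)\leq 4\pi(g-1)$, remarking that this is all that is needed downstream. Your final paragraph makes exactly the same observation, so your write-up is both complete and well calibrated to the paper's actual needs. One minor wording point: in your first step, the claim that a geodesic loop of length $L$ splits at its midpoint into two \emph{minimizing} segments is only literally true for the shortest such loop; the inequality $L\geq 2\,\injrad_X(x)$ for an arbitrary geodesic loop is more cleanly seen by lifting to $\Hyp^2$ and using $2\,\injrad_X(x)=\min_{\gamma\neq 1} d(\tilde x,\gamma\tilde x)$, which you invoke anyway in the second step.
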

For future reference we denote
$$
R_g:= \arccosh \left(\frac{1}{2 \sin \frac{\pi}{12g-6}}\right) 
$$
That such a bound exists is relatively straightforward: the area of a ball in the hyperbolic plane grows exponentially in its radius whereas the area of a surface of genus $g$ is $4\pi(g-1)$ thus the radius of an embedded ball cannot exceed $\log(g)$ by some large amount.

\section{Curve and chain systems}\label{sec:cc}

We'll begin by describing a decomposition of a surface obtained by cutting along simple closed geodesics and simple orthogeodesics between them. This will lead to what will be called an {\it curve and arc} decomposition. We'll then show how the arcs relate to a family of curves to obtain {\it curve and chain systems}. The lengths of these curves will help determine isometry types of surfaces. 

\subsection{Definition and topological types}

\begin{definition}
A curve and arc decomposition of $X \in \M_g$ is a non-empty collection of disjoint simple closed geodesics $\Gamma=\gamma_1,\hdots,\gamma_k$ and a collection of arcs $\A=a_1,\hdots,a_{6g-6}$ on $X \setminus \Gamma$ such that $X \setminus \{\Gamma, \A\}$ is a collection of geodesic right angled hexagons.
\end{definition}

We shall refer to the $\gamma_1,\hdots,\gamma_k$ as the curves and the $a_1,\hdots,a_{6g-6}$ as the arcs (or as orthogeodesics, as they are orthogonal to the curves in their endpoints). 

{\it Example.} A first example of curve and arc decomposition can be obtained by taking $\Gamma$ to be a pants decomposition of $X$ (a collection of disjoint simple closed geodesics that decompose $X$ into three holed spheres or pairs of pants). The set of arcs $\A$ is a collection of three disjoint simple orthogeodesics between boundary curves on each pair of pants.

Observe that, although the definition given is geometric, it could have been made purely topological as follows. The set $\Gamma$ is a set of disjoint non-isotopic essential simple closed curves and $\A$ is a maximal set of disjoint simple arcs on $X \setminus \Gamma$ with endpoints on $\Gamma$. Further note that any non-empty set $\Gamma$ of disjoint simple curves can be completed into a curve-arc decomposition by taking $\A$ to be any maximal set of arcs on $X \setminus \Gamma$ with endpoints on $\Gamma$. 

We'll need to count the number of different topological types of curve and arc decompositions that one can have in genus $g$. For future use, it will be useful to have a marking on the curves $\Gamma$ and arcs $\A$. This is simply a labelling of the curves and arcs and we'll refer to these as marked curve and arc decompositions. Two marked curve and arc decompositions are topologically equivalent if there is a homeomorphism between them which respects the marking on the curves and arcs. 

\begin{lemma}\label{lem:countca}
The number of different topological types of marked curve and arc decompositions is bounded above by 
$$
\frac{1}{e^6} \left( \frac{12^6}{e^5}\right)^{g-1} (g-1)^{6g-6}
$$
\end{lemma}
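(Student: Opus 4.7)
The plan is to encode each topological type of marked curve-arc decomposition by a bounded amount of combinatorial data and then to count.

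Given $(\Gamma,\A)$ with $k=|\Gamma|\le 3g-3$ and $|\A|=6g-6$, I extract: (a) the integer $k$ together with a composition $n_1+\cdots+n_k=12g-12$, where $n_i$ is the number of arc endpoints on the labelled curve $\gamma_i$; (b) the cyclic sequence of the $12g-12$ labelled arc endpoints along each $\gamma_i$ (labels being induced by the arc labels, so each arc contributes two labelled endpoints); (c) for each arc, a choice of side at each of its endpoints, i.e.\ a choice of component of $\Sigma_g\setminus\Gamma$ containing its interior. An Euler-characteristic count (with $V=12g-12$, $E=18g-18$, $F=4g-4$) forces the complement $\Sigma_g\setminus(\Gamma\cup\A)$ to consist of exactly $4g-4$ hexagons, and on any bordered surface a maximal disjoint arc system with hexagonal complement is determined, up to a bounded ambiguity, by the cyclic order of its endpoints on the boundary; thus the data (a)--(c) determines the topological type of the marked decomposition up to a factor which is at most exponential in $g$.

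The count then proceeds as follows. Step (a) contributes a polynomial factor in $g$, bounded by $(3g-3)\binom{12g-13}{3g-4}$. Step (c) contributes at most $4^{6g-6}$. For step (b), the key observation is that the labelled arcs already pair the $12g-12$ endpoints into $6g-6$ labelled pairs, so the essential combinatorial content of the cyclic data is that of a labelled perfect matching on $12g-12$ points, of which there are $(12g-13)!!$. By Stirling, $(12g-13)!!$ is of order $(12g-12)^{6g-6}/e^{6g-6}$. Combining these estimates, the total is bounded by a constant times $(12g-12)^{6g-6}/e^{5g+O(1)}$; using $12g-12=12(g-1)$ and so $(12g-12)^{6g-6}=12^{6(g-1)}(g-1)^{6(g-1)}$, a careful tracking of constants produces
\[
\tfrac{1}{e^6}\left(\tfrac{12^6}{e^5}\right)^{g-1}(g-1)^{6g-6}.
\]

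The main obstacle is the rigidity claim underlying step (b)--(c): that the combinatorial data (a)--(c) determines $(\Gamma,\A)$ up to an ambiguity growing only exponentially (and not factorially) in $g$. This is a topological statement, to be proved component by component on $\Sigma_g\setminus\Gamma$: a maximal disjoint arc system with hexagonal complement on a bordered surface is essentially unique up to a controlled number of Dehn-twist-like moves once its boundary pattern is prescribed. Once this rigidity is in hand, the remaining work is a routine but slightly tedious Stirling calculation to match the exact numerical constants $12$, $6$, $5$ and $e$ in the stated closed form.
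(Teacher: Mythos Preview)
Your proposal has two genuine problems.

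First, the ``rigidity claim'' you flag as the main obstacle is never actually proved. You assert that a maximal hexagonal arc system on a bordered surface is determined, up to only exponentially many choices, by the cyclic boundary pattern of its endpoints; but this is precisely the heart of the matter, and you leave it as a promise. Without it, your encoding in (a)--(c) does not determine the topological type, and the count is not a count of anything.

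Second, even granting the rigidity claim, your arithmetic cannot produce the stated bound. By Stirling, $(12g-13)!!$ is asymptotic to $\sqrt{2}\,(12(g-1)/e)^{6(g-1)}$, so the target expression exceeds $(12g-13)!!$ by a factor of only about $e^{g}$ (up to constants). Your step (c) alone contributes $4^{6g-6}=4096^{g-1}$, which already swamps this slack; the binomial factor from step (a) and the unspecified exponential ambiguity from the rigidity step make it worse. So ``a careful tracking of constants'' cannot recover the claimed closed form from your ingredients.

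The paper sidesteps both issues by reversing the perspective. Rather than starting from $\Gamma$ and trying to reconstruct the arc system from boundary data, it starts from the $4g-4$ hexagons. Each hexagon has three alternating ``arc sides'', giving $12g-12$ half-arcs; a marked curve and arc decomposition is obtained by pairing these half-arcs (the pairing determines the arcs and, automatically, the boundary curves $\Gamma$ together with how they are glued, since the marked arcs mark the boundary). The number of pairings is exactly $\frac{(12g-12)!}{2^{6g-6}(6g-6)!}$, and a single Stirling estimate gives the bound. No rigidity lemma is needed, and no extraneous exponential factors appear.
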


\begin{proof}
A curve and arc decomposition is a decomposition of $X$ into $4g-4$ hexagons. We see $X$ as the result of a two step construction. First we paste together the hexagons to obtain $X\setminus \Gamma$ and then we paste together the boundary curves of $X\setminus \Gamma$ to obtain $X$.

We begin with a set of $4g-4$ hexagons, each with a set of three marked non-adjacent oriented side arcs. For instance, we put the hexagons all in the plane and give the orientation to arcs induced by a fixed orientation of the plane. We'll get rid of the orientation on the arcs later, but it will be useful to retain it for the time being. The hexagons have a "frontside" and a "backside". We're going to paste the hexagons together to obtain an orientable two-sided surface where the front sides of each side hexagon are all on the same side. 

To do so take a first side arc and paste it to another. Note there is a unique way of pasting so that both front sides of the hexagon are on the same side. We retain, as orientation for the resulting arc, the orientation of the first arc.

In all there are $12g-12$ side arcs. That means that there are 
$$
(12g-12)!! = (12g-11) \cdot (12g-9) \cdots 6\cdot 3 = \frac{(12g-12)!}{2^{\sfrac{12g-12}{2}}(\sfrac{12g-12}{2})!}
$$
ways of doing this. Using standard inequalities on factorials gives at most 
$$
\frac{(12g-12)!}{2^{6g-6}(6g-6)!} \leq \frac{(12g-12)^{(12g-12+\sfrac{1}{2})}}{2^{6g-6}e^{12g-11}} \frac{e^{6g}}{\sqrt{2\pi} (6g-g)^{6g-6+\sfrac{1}{2}}}
$$
which after further simplifications becomes
$$
\frac{1}{e^6} \left( \frac{12^6}{e^5}\right)^{g-1} (g-1)^{6g-6}
$$
Now the result is a surface with boundary curves and marked oriented arcs. The marked oriented arcs mark the boundary curves, so in particular, we also have all necessary information to perform the second step of our process (the pasting of the curves $\Gamma$) without any further counting. As we don't care about orientations we can forget them. Certainly we've counted every topological type of marked curve and arc decomposition and the lemma is proved.
\end{proof}

The main purpose of this counting lemma will be to count curve and chain systems which will be introduced in the next section.

\subsection{Coordinates for moduli space}\label{sec:curveandchain}

The goal is to use geometric quantities to determine the isometry class of surfaces. These coordinates are somewhat similar to Fenchel-Nielsen coordinates (but, unlike the latter, some of ours will be redundant). 

Like for Fenchel-Nielsen coordinates it will be necessary to consider twist parameters, but only along curves of $\Gamma$. To do so we consider a way of marking points on each side of a curve of $\Gamma$. An example of how to do this is to choose, for every $\gamma\in \Gamma$ and each side of $\gamma$, an endpoint of an arc $a\in \A$ which has is attached to $\gamma$ and which "leaves" on the corresponding side. The specific marked point associated to a metric structure $X\in \M_g$ is obtained by taking the geodesic realizations of $\gamma$ and $a$ (where $a$ is now an orthogeodesic) and taking the appropriate intersection point between $a$ and $\gamma$ (there might be two of them). In this way, for every $\gamma \in \Gamma$ and every $X\in \M_g$, one has two marked points, say $p^+_\gamma$ and $p^-_\gamma$, one for each side of $\gamma$. 

\begin{definition} The twist parameter along $\gamma$ is the signed distance between $p^+_\gamma$ and $p^-_\gamma$.
\end{definition}

We denote by $\tau(\gamma)$ the twist parameter of $\gamma$. Similarly, $\tau(\Gamma)$ is the set of (marked) twist parameters of $\Gamma$. Similarly, $\ell(a)$ is the length of the unique orthogeodesic in the free homotopy class of $a$ and we'll denote by $\ell(\A)$ the  set of (marked) lengths of $\A$.

\begin{lemma}\label{lem:arc}
The parameters $\tau(\Gamma)$ and $\ell(\A)$ uniquely determine $X\in \M_g$. 
\end{lemma}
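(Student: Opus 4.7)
The plan is to reverse-engineer $X$ from the marked topological type of the curve-and-arc decomposition (implicit in the data being labelled) together with the numerical parameters $\ell(\A)$ and $\tau(\Gamma)$, by exploiting the rigidity of right-angled hexagons. Since the decomposition is marked, the combinatorial gluing pattern of the $4g-4$ hexagons forming $X\setminus\{\Gamma,\A\}$ is encoded in the data: we know which alternating sides of which hexagon correspond to which arc of $\A$, and along which pieces the hexagons meet at the closed curves of $\Gamma$.

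First I would reconstruct each hexagon individually. Each has three alternating arc-sides, of lengths directly read off from $\ell(\A)$, and three alternating curve-sides (pieces of elements of $\Gamma$). A right-angled hexagon in $\Hyp^2$ is rigid once three alternating sides are fixed; the remaining three sides are then determined by the classical right-angled hexagon identities (e.g.\ $\cosh b_i\sinh a_j\sinh a_k=\cosh a_j\cosh a_k+\cosh a_i$ in the usual notation). Hence every hexagon is pinned down up to isometry, every curve-side length is recovered, and by summation the length $\ell(\gamma)$ of each $\gamma\in\Gamma$ is determined. Next, since an orthogeodesic segment carries no continuous modulus of isometric identification, the gluing of two hexagons along a shared arc-side of given length is canonical, so performing all arc-side gluings according to the prescribed combinatorics unambiguously reconstructs the hyperbolic surface-with-boundary $X\setminus\Gamma$.

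Finally, I would glue the boundary of $X\setminus\Gamma$ back along $\Gamma$ to recover $X$. For each $\gamma\in\Gamma$, the two copies of $\gamma$ in $\partial(X\setminus\Gamma)$ are circles of the same (now known) length, and an orientation-preserving isometric identification between them is specified by a single real parameter once a marked point is chosen on each side. By construction the marked points $p_\gamma^+$ and $p_\gamma^-$ are endpoints of prescribed arcs in $\A$, whose positions on the reconstructed $\gamma$ are already determined by the hexagon step, so the signed distance $\tau(\gamma)$ precisely pins down the gluing. The only point needing care is the rigidity of arc-side identifications, ensuring that the sole residual twisting freedom in the reconstruction lives on the closed curves of $\Gamma$ and is therefore captured by $\tau(\Gamma)$; once this is in place, the reconstruction is unique and the lemma follows.
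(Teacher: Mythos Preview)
Your proof is correct and follows essentially the same approach as the paper's own proof, which is a brief sketch: arc lengths determine the hexagons (by right-angled hexagon rigidity), these in turn determine the lengths of $\Gamma$, and then $\tau(\Gamma)$ fixes the gluing along $\Gamma$. Your version simply fills in the details the paper leaves implicit, including the canonicity of the arc-side gluings and the explicit role of the marked points $p_\gamma^\pm$.
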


\begin{proof}
The length of the arcs of $\A$ determine the geometry of each of the hexagons. They in turn determine the lengths of $\Gamma$. The only thing remaining is how to determine how the elements of $\Gamma$ are pasted but this is determined by $\tau(\Gamma)$. 
\end{proof}

Recall our goal is to relate the length spectrum to isometry classes of surfaces so although the curve-arc length parameters are convenient, we'd like a set of parameters which only use curves. To do so we replace the lengths of arcs by the lengths of curves as follows. 

First given $a\in \A$, we define a free homotopy class (or equivalently a closed geodesic) as follows. Give $a$ and $X\setminus \Gamma$ an orientation (this defines an orientation on the end geodesics of $a$, say $\gamma_1$ and $\gamma_2$). Now let $\gamma_a$ be the closed geodesic in the free homotopy class of
$$\gamma_1 * a * \gamma_2  * a^{-1}$$
Note that if $a$ is arc between distinct curves $\gamma_1$ and $\gamma_2$, then $\gamma_a$ is simple. Otherwise it is a closed geodesic with two self-intersection points as in the right side of Figure \ref{fig:pants}. (Here distinct curves means distinct curves on $X\setminus \Gamma$: they could very well correspond to the same curve on $X$.)
\begin{figure}[h]
{\color{linkred}
\leavevmode \SetLabels
\L(.175*.37) $a$\\%
\L(.32*.82) $\gamma_a$\\
\L(.385*.2) $\gamma_2$\\%
\L(.085*.2) $\gamma_1$\\
\L(.674*.35) $a$\\%
\L(.68*.81) $\gamma_a$\\
\L(.595*.26) $\gamma_1$\\
\endSetLabels
\begin{center}
\AffixLabels{\centerline{\epsfig{file =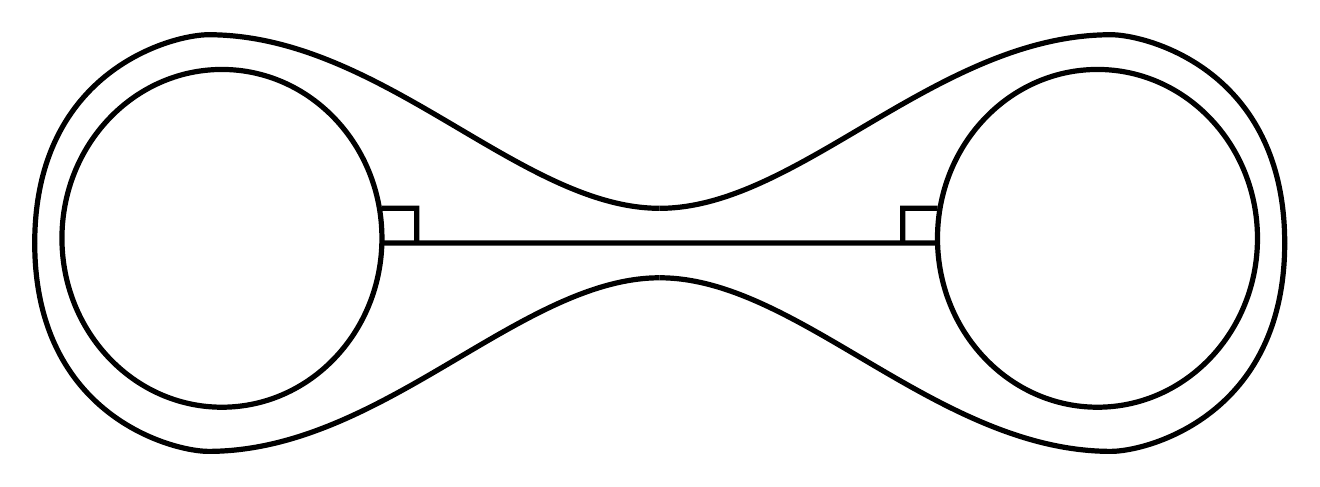,width=7cm,angle=0}\hspace{20pt}\epsfig{file =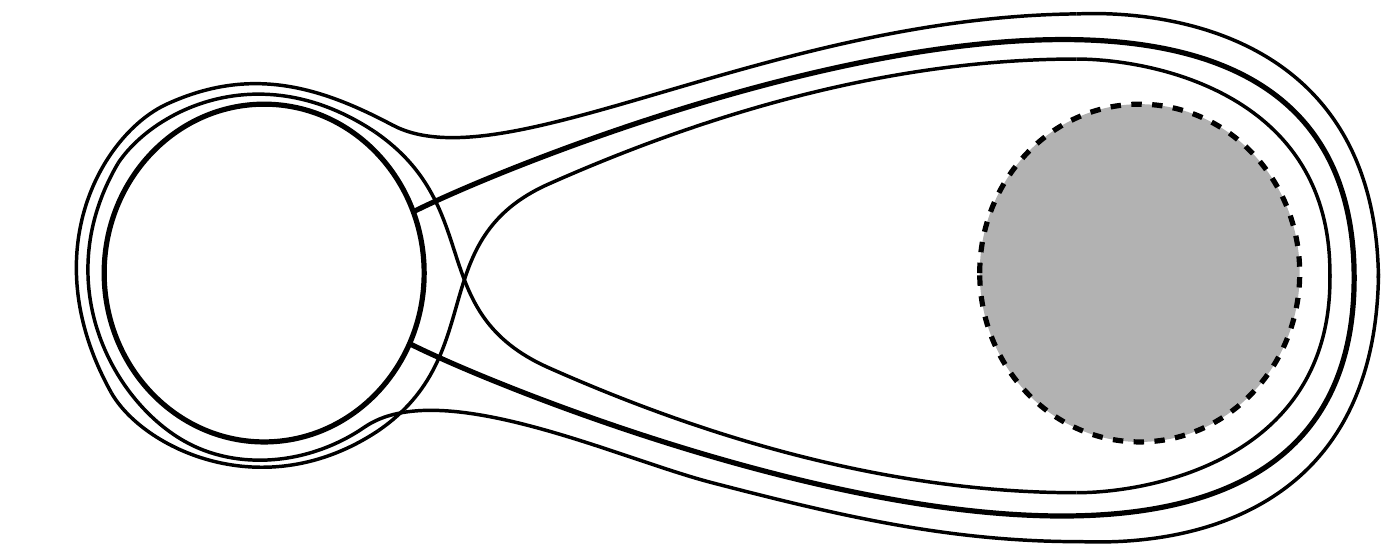,width=7cm,angle=0}}}
\vspace{-24pt}
\end{center}
}
\caption{Different types of chains and the associated embedded and immersed pants} \label{fig:pants}
\end{figure}

\begin{lemma}
Let $a \in \A$ be an arc between $\gamma_1, \gamma_2 \in \Gamma$. Then $\ell(a)$ is determined by $\ell(\gamma_1), \ell(\gamma_2)$ and $\ell(\gamma_a)$.
\end{lemma}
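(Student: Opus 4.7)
The plan is to realise the triple $\gamma_1, \gamma_2, \gamma_a$ as the boundary of an embedded geodesic pair of pants $P \subset X$ containing $a$, and then recognise $a$ as the common perpendicular between two of its boundary components. First I would take a small regular neighbourhood $N$ of $\gamma_1 \cup a \cup \gamma_2$ in $X$. Because $a$ has its endpoints on two distinct boundary components of $X \setminus \Gamma$, a standard topological check shows that $N$ is a three-holed sphere whose three boundary components are freely homotopic to $\gamma_1$, $\gamma_2$, and $\gamma_1 * a * \gamma_2 * a^{-1}$, that is, to $\gamma_a$. Replacing each of these boundary curves by its geodesic representative then yields an embedded totally geodesic pair of pants $P$ with $\partial P = \gamma_1 \sqcup \gamma_2 \sqcup \gamma_a$ still containing the arc $a$.

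Next I would observe that inside $P$ the arc $a$ is a geodesic segment meeting $\gamma_1$ and $\gamma_2$ orthogonally at its endpoints and not meeting $\partial P$ elsewhere. By uniqueness of the common perpendicular between two boundary components of a hyperbolic pair of pants, $a$ must be this common perpendicular. I would then invoke standard right-angled hexagon trigonometry: the three common perpendiculars of $P$ cut it into two isometric right-angled hexagons whose sides alternate between $\ell(\gamma_1)/2$, $\ell(\gamma_2)/2$, $\ell(\gamma_a)/2$ and the three perpendicular lengths. The hexagon formula then expresses $\ell(a)$ explicitly in terms of $\ell(\gamma_1), \ell(\gamma_2), \ell(\gamma_a)$, giving the claim.

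The step that needs a little care is the first: one must verify that the three boundary geodesics of the regular neighbourhood are pairwise disjoint and distinct, so that passing to geodesic representatives yields a genuinely embedded pair of pants rather than a degenerate or immersed configuration. This follows from the hypothesis $\gamma_1 \neq \gamma_2$ (the same hypothesis that ensures $\gamma_a$ is simple), together with the observation that $\gamma_a$ is isotopic to a curve lying in $N$ and hence disjoint from $\gamma_1$ and $\gamma_2$. Once embeddedness is in hand, the rest of the argument is purely a consequence of the rigidity of hyperbolic pairs of pants, which are determined up to isometry by their three boundary lengths.
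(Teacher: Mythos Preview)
Your argument is essentially the paper's argument in the case where $\gamma_1$ and $\gamma_2$ are distinct, but you have overlooked the other case. The lemma does \emph{not} assume $\gamma_1 \neq \gamma_2$: the arc $a$ may have both endpoints on the same boundary component of $X\setminus\Gamma$, and the paper explicitly notes just before the lemma that in this situation $\gamma_a$ is not simple but has two self-intersection points. Your sentence ``This follows from the hypothesis $\gamma_1 \neq \gamma_2$ (the same hypothesis that ensures $\gamma_a$ is simple)'' is therefore appealing to a hypothesis that isn't there. When $\gamma_1=\gamma_2$, the regular neighbourhood of $\gamma_1\cup a$ is not a three-holed sphere with three distinct essential boundary curves, and the pants you want is only \emph{immersed}, not embedded. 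The paper handles this by observing that one can still argue inside the immersed pair of pants, since the hexagon trigonometry is intrinsic to the pants and does not require embeddedness in $X$.

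There is also a smaller wrinkle even in the distinct case: you take the neighbourhood $N$ in $X$, but $\gamma_1$ and $\gamma_2$ being distinct boundary components of $X\setminus\Gamma$ does not prevent them from being the two sides of a single curve of $\Gamma$ on $X$. In that event $N\subset X$ is a one-holed torus, not a three-holed sphere. The clean fix, which the paper implicitly adopts, is to work inside $X\setminus\Gamma$ (or equivalently inside the abstract pair of pants determined by the three lengths) rather than in $X$; then in both cases the three lengths $\ell(\gamma_1),\ell(\gamma_2),\ell(\gamma_a)$ determine the hexagon and hence $\ell(a)$.
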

\begin{proof}
Note that if $\gamma_1$ and $\gamma_2$ are distinct, then they together with $\gamma_a$ are the three boundary curves of an embedded pair of pants. If they are not distinct then they are still the boundary curves of a pair of pants, but this time it's immersed and not embedded. The two cases are illustrated in Figure \ref{fig:pants}. 

In both cases, we can argue inside the pair of pants and use a standard fact from hyperbolic trigonometry that tells you that three lengths determine a right angled hyperbolic hexagon.
\end{proof}

Putting these two previous lemmas together, we have the following proposition. 

\begin{proposition}\label{prop:}
The quantities $\ell(\gamma),\tau(\gamma)$ for $\gamma\in \Gamma$ and $\ell(\gamma_a)$ for $a\in \A$ determine a surface $X\in \M_g$.
\end{proposition}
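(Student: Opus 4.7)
The plan is to combine the two lemmas immediately preceding the proposition. Lemma \ref{lem:arc} asserts that the data $\tau(\Gamma)$ together with $\ell(\A)$ determines $X \in \M_g$, so it is enough to show that the proposed data $\{\ell(\gamma), \tau(\gamma)\}_{\gamma\in\Gamma} \cup \{\ell(\gamma_a)\}_{a\in\A}$ determines $\ell(\A)$.

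First I would fix an arbitrary $a\in\A$ and let $\gamma_1,\gamma_2\in\Gamma$ be the (not necessarily distinct) curves to which its endpoints are attached. By the preceding lemma, $\ell(a)$ is a function of the three lengths $\ell(\gamma_1)$, $\ell(\gamma_2)$, $\ell(\gamma_a)$. All three of these values lie in our data set: the first two because $\gamma_1,\gamma_2\in\Gamma$, and the third because $a\in\A$. Doing this for every $a\in\A$ recovers the entire set $\ell(\A)$.

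Having recovered $\ell(\A)$, I would simply invoke Lemma \ref{lem:arc} with the given $\tau(\Gamma)$ to conclude that $X$ is uniquely determined in $\M_g$. Note that the lengths $\ell(\gamma)$ for $\gamma\in\Gamma$ in the hypothesis are in fact redundant given the rest (they are already implicitly determined by $\ell(\A)$ via the hexagon geometry in the proof of Lemma \ref{lem:arc}), but keeping them makes the reduction to the previous lemma transparent, which is the main point.

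There is no real obstacle here; the substantive content has already been packaged into the two previous lemmas, and this proposition is simply the composition of those two reductions. The only mild subtlety worth pointing out in the write-up is that the case $\gamma_1=\gamma_2$ (the immersed-pants case depicted on the right of Figure \ref{fig:pants}) is already handled by the preceding lemma, so no separate treatment is needed.
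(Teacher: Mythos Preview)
Your argument is correct and matches the paper exactly: the proposition is stated there simply as the result of ``putting these two previous lemmas together,'' and your write-up spells out precisely that composition. One small caveat: your parenthetical claim that the lengths $\ell(\gamma)$ are redundant is not justified as written---you used those very lengths to recover $\ell(\A)$ via the second lemma, so the implication you cite runs the wrong way---but this aside plays no role in the proof itself.
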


For future reference, we'll refer to the curve $\gamma_a$ for $a\in \A$ as the {\it chain} associated to $a$. The set of curves $\Gamma,\Gamma_\A:=\{\gamma_a\}_{a\in \A}$ will be referred to as a {\it curve and chain system}. 

We remark that the number of marked topological curve and chain systems, which we'll denote by $N_{cc}(g)$, is equal to the number of topological curve and arc decompositions with marked arcs. Thus, by Lemma \ref{lem:countca} we have the following. 

\begin{lemma}\label{lem:countcc}
The number $N_{cc}(g)$ of topological types of marked curve and chain systems is bounded above by 
$$
\frac{1}{e^6} \left( \frac{12^6}{e^5}\right)^{g-1} (g-1)^{6g-6}
$$
\end{lemma}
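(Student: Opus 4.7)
The plan is to reduce this directly to Lemma \ref{lem:countca}, which has already supplied the same numerical bound for marked curve and arc decompositions. The remark immediately preceding the lemma claims that marked curve and chain systems are in bijection with marked curve and arc decompositions, so my task is simply to justify that bijection and then quote Lemma \ref{lem:countca}.

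First I would write down the forward map. Given a marked curve and arc decomposition $(\Gamma, \A)$, each arc $a \in \A$ has endpoints on a pair of (not necessarily distinct) curves $\gamma_1, \gamma_2 \in \Gamma$, and the construction given just before Figure \ref{fig:pants} assigns to $a$ the free homotopy class of $\gamma_1 * a * \gamma_2 * a^{-1}$. This depends only on the topological type of $(\Gamma, \A)$, so it descends to a map on topological types, and it respects the marking by construction (the marking on $\A$ transfers to a marking on $\Gamma_\A$).

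Next I would verify the map is a bijection on topological types. Surjectivity is immediate from the way curve and chain systems were defined. For injectivity, one has to recover an arc $a$, up to isotopy in $\Sigma_g \setminus \Gamma$, from the curves $\gamma_1, \gamma_2$ and the chain $\gamma_a$. In the embedded case (distinct $\gamma_1, \gamma_2$), the triple $\{\gamma_1, \gamma_2, \gamma_a\}$ bounds an embedded pair of pants, and $a$ is the unique isotopy class of simple arc from $\gamma_1$ to $\gamma_2$ inside it. In the immersed case (when $\gamma_1 = \gamma_2$ as a curve on $\Sigma_g \setminus \Gamma$), the same picture holds for the pair of pants with $\gamma_a$ as one cuff and two copies of $\gamma_1$ as the others, and $a$ is again the unique simple-arc isotopy class joining the two copies. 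Since markings are preserved, topologically equivalent chain systems come from topologically equivalent arc decompositions.

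Once the bijection is in hand, the bound from Lemma \ref{lem:countca} transfers verbatim, giving the claimed inequality for $N_{cc}(g)$. I expect no real obstacle: the only step requiring care is the topological recovery of $a$ from $\gamma_a$ in the immersed case of Figure \ref{fig:pants}, but the pair-of-pants description makes this unambiguous.
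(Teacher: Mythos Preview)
Your proposal is correct and follows the paper's approach exactly: the paper states the lemma immediately after remarking that $N_{cc}(g)$ equals the number of marked curve and arc decompositions, and simply invokes Lemma~\ref{lem:countca}. You supply more detail than the paper does by sketching why the correspondence is a bijection; note that for the upper bound alone only surjectivity is needed, and that is automatic from the definition of $\Gamma_\A$, so your injectivity discussion---while justifying the paper's ``equal to'' claim---is not strictly required for the lemma as stated.
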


\subsection{Curve and chain systems of bounded length}\label{ss:}

In this section we prove the existence of curve and chain systems of lengths bounded above by a function of topology (Theorem \ref{thm:cclength}).

We'll need to bound the distance between a geodesic loop based in a point and the collar neighborhood of the corresponding simple closed geodesic (the core curve). This comes up in the following situation. If the core curve is not too short, say greater than $1$ for instance, then there is a bound on the (Hausdorff) distance between the loop and the closed geodesic than only depends on the length of the loop (see \cite[Lemma 2.3]{ParlierBers} for a precise statement). However, if the core curve is arbitrarily short, the loop can be arbitrarily far away. The following lemma gives a bound, that only depends on the length of the loop, on the distance between the collar of the core curve and the loop.

\begin{lemma}\label{lem:collardistance}
Let $c\subset X$ be a geodesic simple loop and $\gamma$ be the unique simple closed geodesic freely homotopic to $c$. Then 
$$
\sup_{p \in c} \{d_X(p,\CC(\gamma)\} <\log \left(\sinh\left( \frac{\ell(c)}{2} \right) \right)
$$
\end{lemma}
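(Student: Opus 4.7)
The plan is to work in the universal cover $\Hyp$ and exploit the standard displacement formula for hyperbolic isometries. Lift $c$ to an arc $\tilde c$ of length $\ell(c)$ running from some point $\tilde p$ to $g\tilde p$, where $g \in \PSL(2,\R)$ is the hyperbolic isometry representing the free homotopy class of $c$. Then $g$ has axis $\tilde \gamma$ (a lift of $\gamma$) and translation length $\ell(\gamma)$.

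The key first observation is that for any $q = \tilde c(t)$ on $\tilde c$, the concatenation of $\tilde c|_{[t,\ell(c)]}$ with $g\tilde c|_{[0,t]}$ is a piecewise smooth path from $q$ to $gq$ of total length exactly $\ell(c)$. In particular
$$
d_\Hyp(q,gq) \;\le\; \ell(c).
$$
Plugging this into the standard identity
$$
\sinh\!\left(\frac{d_\Hyp(q,gq)}{2}\right) \;=\; \cosh\!\left(d_\Hyp(q,\tilde \gamma)\right)\sinh\!\left(\frac{\ell(\gamma)}{2}\right)
$$
gives
$$
d_\Hyp(q,\tilde \gamma) \;\le\; \arccosh\!\left(\frac{\sinh(\ell(c)/2)}{\sinh(\ell(\gamma)/2)}\right).
$$
Since the preimage of $\CC(\gamma)$ contains the $w$-neighborhood of $\tilde\gamma$ with $w=\arcsinh(1/\sinh(\ell(\gamma)/2))$ and projection to $X$ does not increase distances, we obtain
$$
d_X(p,\CC(\gamma)) \;\le\; \arccosh\!\left(\frac{\sinh(\ell(c)/2)}{\sinh(\ell(\gamma)/2)}\right) - \arcsinh\!\left(\frac{1}{\sinh(\ell(\gamma)/2)}\right).
$$

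The final step is a routine trig manipulation. Writing $L=\sinh(\ell(c)/2)$ and $s=\sinh(\ell(\gamma)/2)$ with $L\ge s>0$, and using $\arccosh(x)=\log(x+\sqrt{x^2-1})$ and $\arcsinh(x)=\log(x+\sqrt{x^2+1})$, the bound rewrites as
$$
\log\!\left(\frac{L+\sqrt{L^2-s^2}}{1+\sqrt{1+s^2}}\right),
$$
and the desired inequality against $\log L$ reduces, after clearing logs and squaring, to $-s^2 < L^2 s^2$. This is immediate and strict, which is what produces the strict bound in the statement.

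The main point to be careful about is the path-length accounting in $\Hyp$: one has to see that the concatenation trick gives a path of length exactly $\ell(c)$ joining $q$ to $gq$, regardless of where $q$ sits on $\tilde c$. Once this is in place, the rest is an unwinding of the hyperbolic trigonometric identity for displacement off an axis, followed by the short algebraic inequality above.
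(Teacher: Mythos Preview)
Your argument is correct and is essentially the paper's proof rephrased in the universal cover: the displacement identity $\sinh(d(q,gq)/2)=\cosh(d(q,\tilde\gamma))\sinh(\ell(\gamma)/2)$ is exactly the Lambert/Saccheri quadrilateral formula the paper invokes for the cylinder between $c$ and $\gamma$, and your final algebraic estimate matches the paper's line by line. The one cosmetic difference is that the paper observes directly that the basepoint of the geodesic loop is the point of $c$ farthest from $\gamma$ (by convexity of distance to $\tilde\gamma$ along the geodesic lift $\tilde c$), whereas your concatenation trick yields the same bound uniformly for every $q\in\tilde c$ without identifying the extremal point; both routes land on the identical expression $\arccosh\!\big(\sinh(\ell(c)/2)/\sinh(\ell(\gamma)/2)\big)-\arcsinh\!\big(1/\sinh(\ell(\gamma)/2)\big)$.
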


\begin{proof}
The proof is a straightforward hyperbolic trigonometry computation. The loop $c$ (based in a point $p$) and $\gamma$ form the boundary curves of an embedded cylinder in $X$. The cylinder can further be decomposed into two isometric quadrilaterals with three right angles (sometimes called trirectangles or Lambert quadrilaterals) with opposite sides of lengths $\sfrac{\ell(c)}{2}$ and $\sfrac{\ell(\gamma)}{2}$ as in Figure \ref{fig:quad}.

\begin{figure}[h]
{\color{linkred}
\leavevmode \SetLabels
\L(.503*.91) $\sfrac{\ell(c)}{2}$\\%
\L(.43*.67) $d$\\%
\L(.40*.97) $p$\\
\L(.46*.23) $w$\\
\L(.51*-0.07) $\sfrac{\ell(\gamma)}{2}$\\
\endSetLabels
\begin{center}
\AffixLabels{\centerline{\epsfig{file =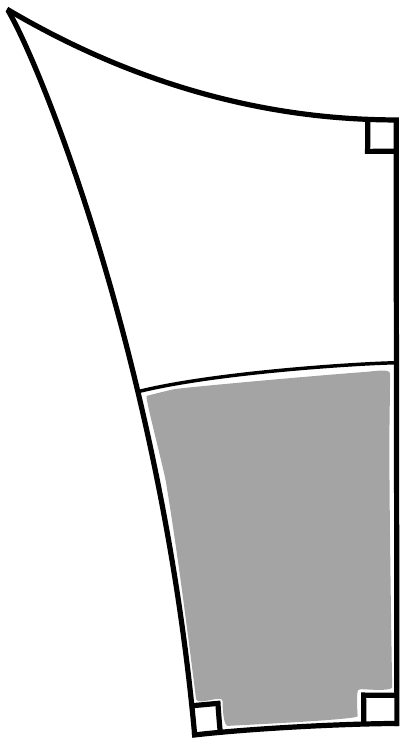,width=2.5cm,angle=0}}}
\vspace{-24pt}
\end{center}
}
\caption{The quadrilateral} \label{fig:quad}
\end{figure}

The collar around $\gamma$ corresponds to the shaded region in Figure \ref{fig:quad}. The segment marked $d$ is an upper bound on the distance between $c$ and $\CC(\gamma)$ and this is what we want to bound. It's a subarc of the arc of length $d + w$ joining $p$ to $\gamma$ where $w$ is the width of $\CC(\gamma)$. Appealing to hyperbolic trigonometry in the quadrilateral one obtains
$$
\sinh\left(\frac{\ell(c)}{2} \right) = \sinh\left(\frac{\ell(\gamma)}{2} \right) \cosh\left(d+\arcsinh\left(\frac{1}{\sinh\left(\sfrac{\ell(\gamma)}{2}\right)} \right)\right)
$$
From this we obtain
$$
d = \arccosh \left(\frac{\sinh\left(\frac{\ell(c)}{2} \right)}{\sinh\left(\frac{\ell(\gamma)}{2} \right)} \right) - \arcsinh\left(\frac{1}{\sinh\left(\sfrac{\ell(\gamma)}{2}\right)}\right)
$$
This becomes
\begin{eqnarray*}
d & = & \log \left( \frac{\sinh\left(\frac{\ell(c)}{2} \right)}{\sinh\left(\frac{\ell(\gamma)}{2}\right)}+ \sqrt{   \frac{\sinh^2\left(\frac{\ell(c)}{2} \right)}{\sinh^2\left(\frac{\ell(\gamma)}{2}\right)}  -1 }\right) - \log \left( \frac{1}{\sinh\left(\frac{\ell(\gamma)}{2}\right)}+ \sqrt{   \frac{1}{\sinh^2\left(\frac{\ell(\gamma)}{2}\right)}  + 1 }\right) \\
& < &  \log \left( \frac{2\sinh\left(\frac{\ell(c)}{2} \right)}{\sinh\left(\frac{\ell(\gamma)}{2}\right)}\right) - \log \left( \frac{2}{\sinh\left(\frac{\ell(\gamma)}{2}\right)}\right) \\
& = & \log \left(\sinh\left( \frac{\ell(c)}{2} \right) \right)
\end{eqnarray*}
as desired.
\end{proof}

We now prove the existence of short chain and curve systems, the main result of this section. 

\begin{proof}[Proof of Theorem \ref{thm:cclength}]
Consider $X\in \M_g$. We begin by considering the set of curves $\Gamma_0$ of $X$ of length at most $2 \arcsinh(1)$. By Corollary \ref{cor:shortboundary}, their collars are disjoint. We now consider 
$$
X_0 := X \setminus \{\CC(\gamma) \mid \gamma \in \Gamma_0 \}
$$
This set may not be connected. 

We'll now iterate the following step starting with $k=0$: we choose a point $x\in X_0$ such that
$$d_{X_k}(x,\partial X_k\} \leq \log(4g)$$
We consider the shortest non-trivial loop $\delta_x$ based in $x$. By Lemma \ref{lem:systole} we have a bound on its length:
$$
\ell(\delta_x) \leq 2 R_g
$$
It's straightforward to check that $R_g < \log(4g)$.

We consider the unique simple closed geodesic $\delta$ (on $X$) freely homotopic to $\delta_x$. By Lemma \ref{lem:collardistance}, the distance between $x$ and $\CC(\delta)$ satisfies
$$
d_{X_k} (x, \CC(\delta)) < \log(\sinh(R_g))< \log(2g)
$$
where the last inequality is the result of comparing the two functions by standard manipulations. In particular, this implies that $\delta$ is contained in $X_k$ and is not a boundary curve of $X_k$. We then set 
$$
X_{k+1}:=X_k
$$
and repeat the procedure until all $x\in X_k$ satisfy $d_{X_k}(x,\partial X_k) < \log(4g).$ The disjoint set of curves we've cut $X$ along (which include those of $\Gamma_0$) are denoted $\Gamma$. For all $\gamma \in \Gamma$ we have
$$
\ell(\gamma) < 2 \log(4g)
$$
as desired.

We now consider
$$X':= X_0\setminus \Gamma$$
and consider a Voronoi cell decomposition of $X$ around the curves of $\Gamma$. This is simply the attribution of (at least) one element of $\Gamma$ to every $x\in X'$ by taking the curve (or curves) of $\Gamma$ closest to $x$.
The cells of the decomposition are 
$$C_\gamma := \{x \in X' \mid d_{X'}(x, \gamma)\leq d_{X'}(x, \delta) \mbox{ for all }\delta \in \Gamma\}$$
Note that by construction
$$
d_{X'}(x, \gamma)\leq \log(4g)
$$
for all $x \in C_\gamma$.

The points that lie in several cells we refer to as the cut locus of the decomposition. It's an embedded graph, and generically the graph is trivalent. We want to find a decomposition of $X'$ into hexagons dual the cell decomposition as follows. This process is completely analogous to the construction of a Delaunay triangulation for a choice of points in the plane for instance.

Dual to each edge of the cut locus we construct an arc between the corresponding curves of $\Gamma$ as in Figure \ref{fig:arc}. 

\begin{figure}[h]
{\color{linkred}
\leavevmode \SetLabels
\endSetLabels
\begin{center}
\AffixLabels{\centerline{\epsfig{file =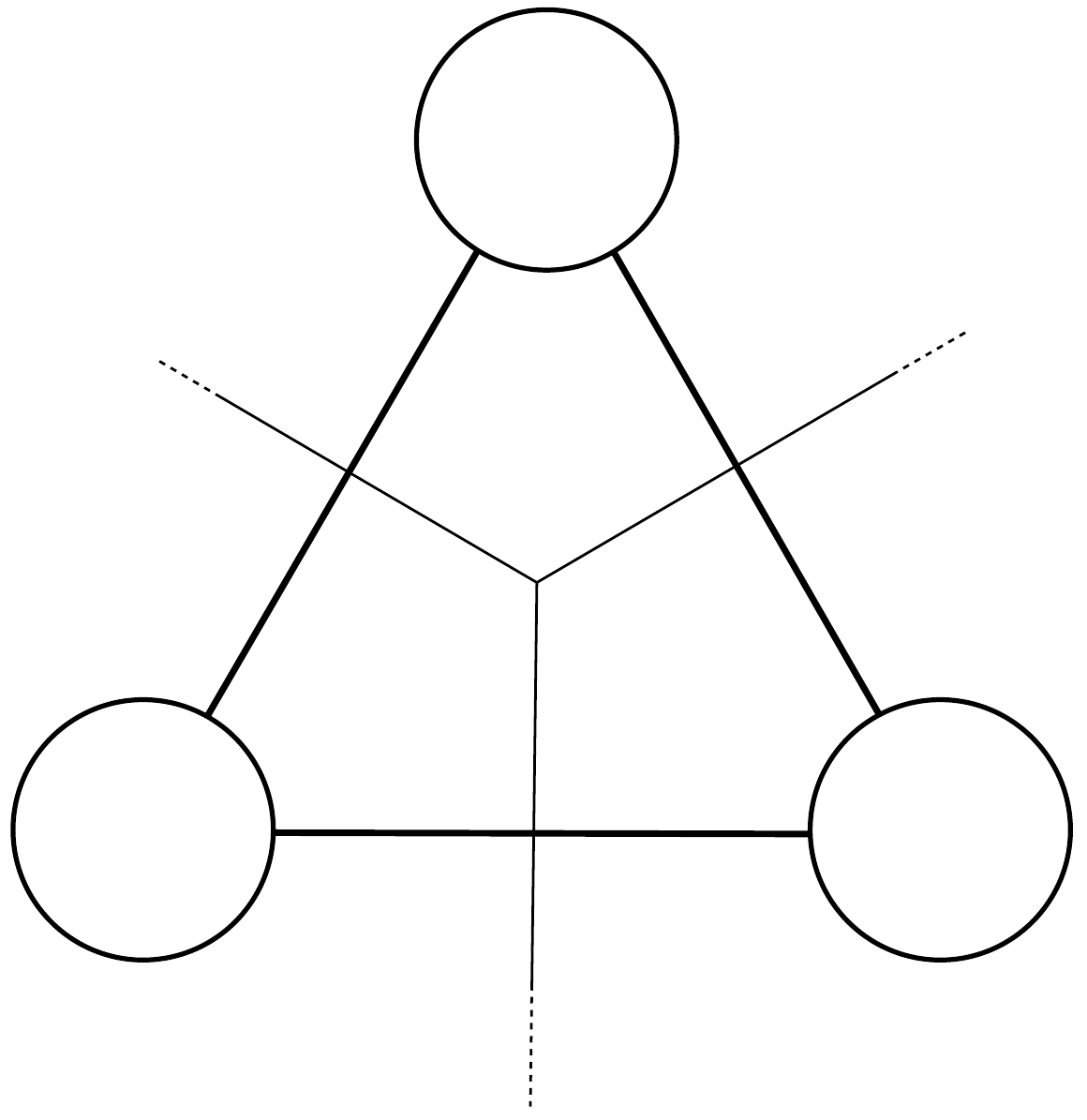,width=3.5cm,angle=0}}}
\vspace{-24pt}
\end{center}
}
\caption{Constructing arcs dual to the cut locus} \label{fig:arc}
\end{figure}

If the cut locus is trivalent (which it is generically) then the resulting arc decomposition is maximal in that all remaining (non trivial and non homotopic) arcs essentially cross these. As such, this provides a decomposition into hexagons. 

If the cut locus is not trivalent, there are choices to be made (just like when a set of points in the plane admits several Delaunay triangulations). To do so, in any vertex $v$ of degree $k\geq 4$ of the cut locus, consider the set of simple arcs $\{ c_i\}_{i=1}^k$ to each of the boundary curves of $X'$ whose Voronoi cells touch $v$. Each arc is contained in the corresponding cell and we suppose that they are cyclically oriented around $v$. Now fix one of these arcs, say $c_1$, and orient it towards $v$. Orient all of the others away from $v$ and consider the arcs obtained by concatenating $c_1$ with $c_i$ for $i=3,\hdots,k-1$ (see Figure \ref{fig:cutlocus}).

\begin{figure}[h]
{\color{linkred}
\leavevmode \SetLabels
\L(.170*.37) $v$\\%
\L(.147*.27) $c_1$\\
\L(.22*.285) $c_2$\\%
\L(.23*.465) $c_3$\\
\L(.183*.62) $c_4$\\%
\L(.110*.476) $c_5$\\
\endSetLabels
\begin{center}
\AffixLabels{\centerline{\epsfig{file =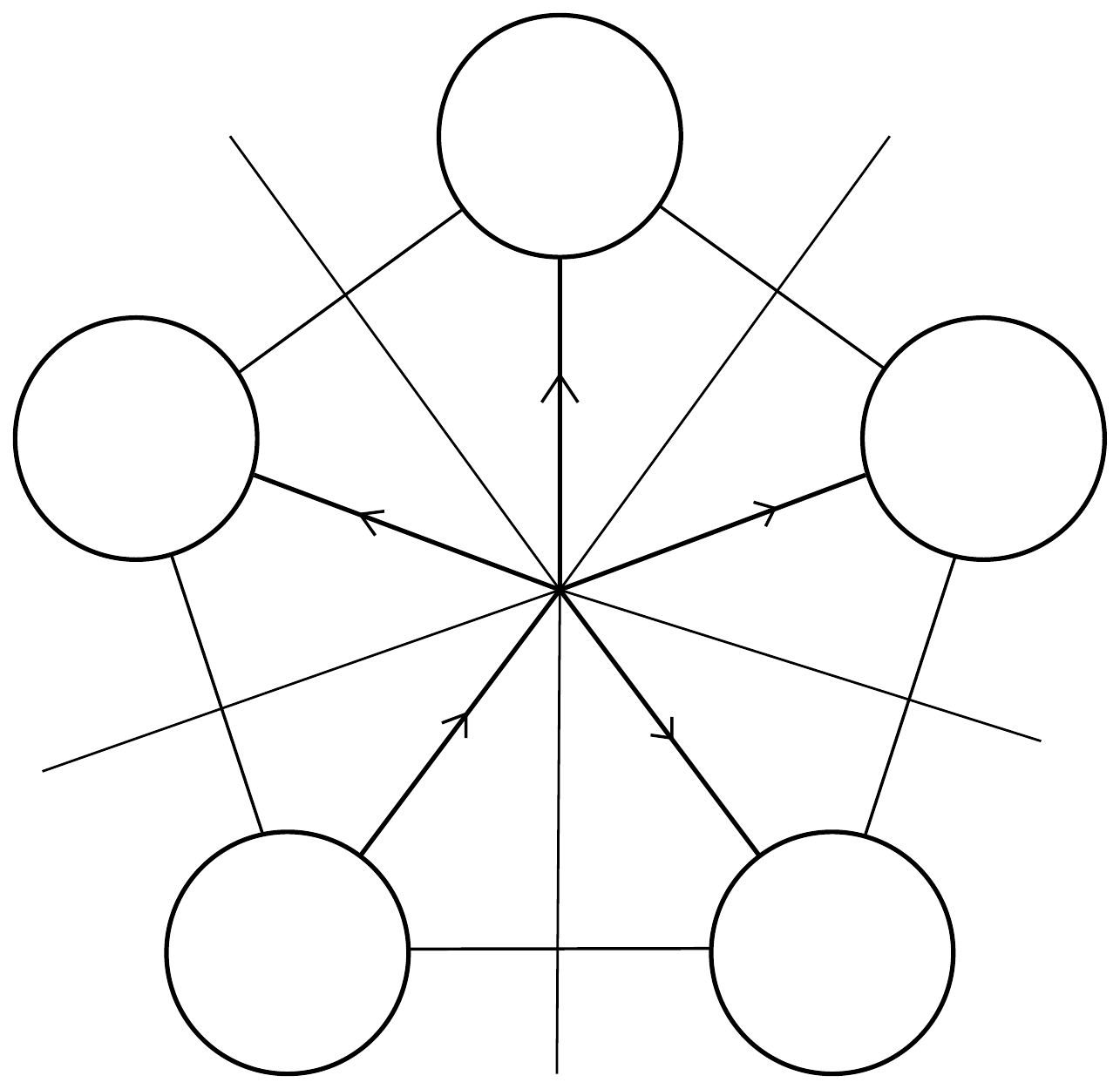,width=4.5cm,angle=0}\hspace{10pt}\epsfig{file =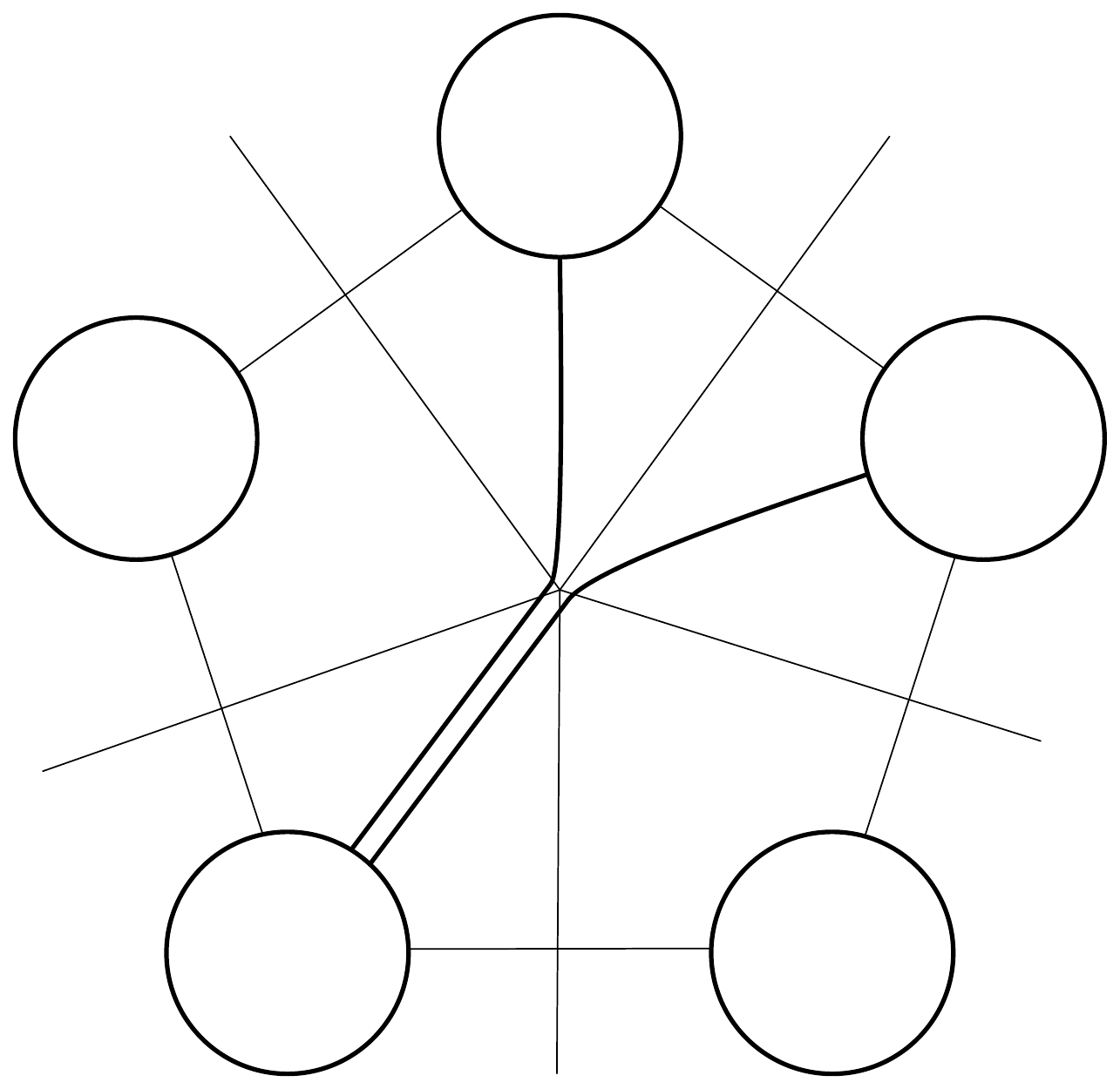,width=4.5cm,angle=0}\hspace{10pt}\epsfig{file =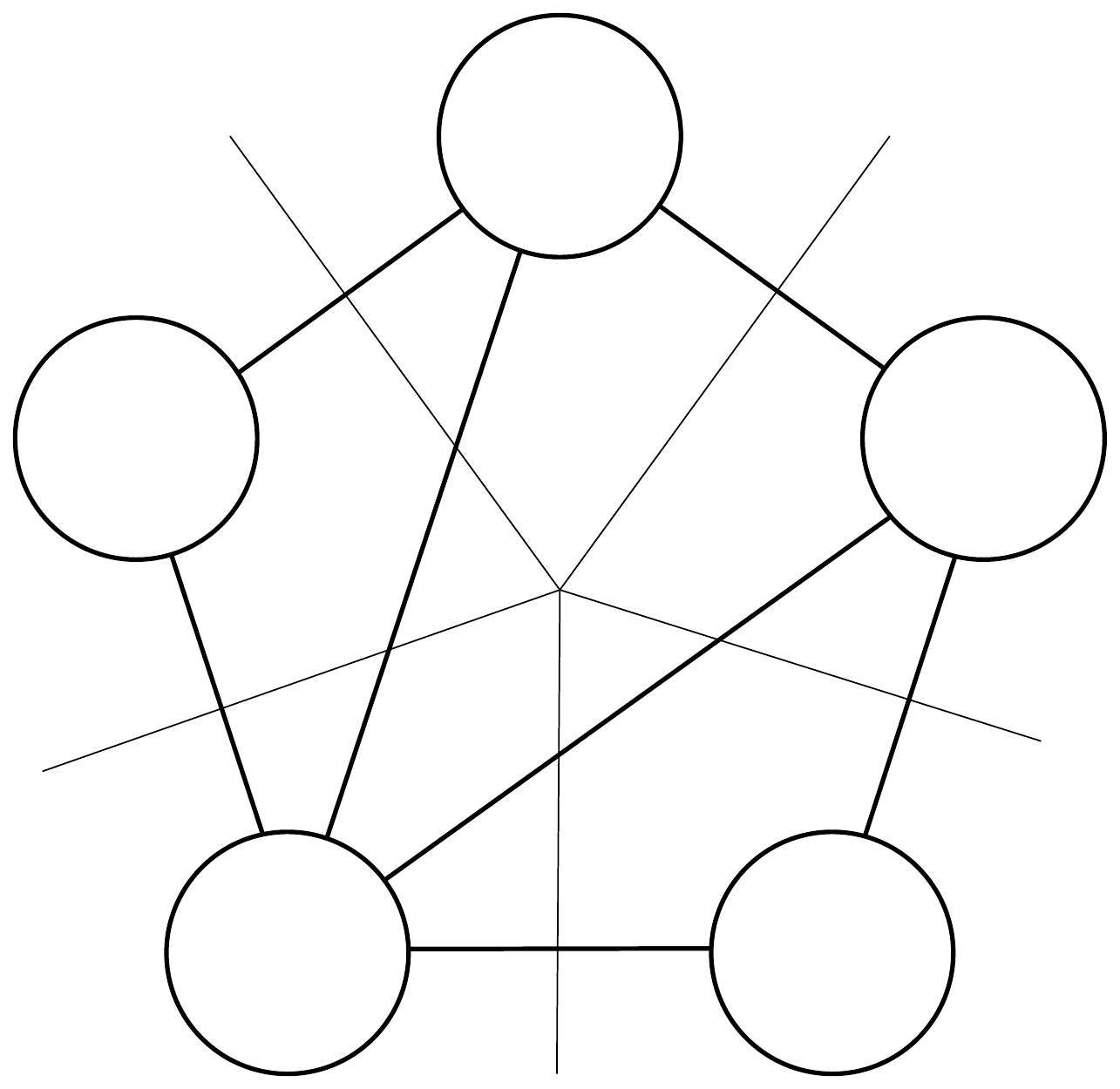,width=4.5cm,angle=0}
}}
\vspace{-24pt}
\end{center}
}
\caption{Constructing arcs} \label{fig:cutlocus}
\end{figure}

Note that the arcs $c_1* c_2$ and $c_1 * c_k$ are isotopic to arcs dual to the cut locus. We add this set of $k-3$ arcs and repeat the process in all vertices of degree higher than $3$.

Once we've chosen these arcs, we take the minimizers that join the corresponding boundary curves in $X'$ and these are simple orthogeodesics the set of which we denote by $\A$. By construction, any $a \in \A$ satisfies
$$
\ell(a) \leq 2 \log(4g)
$$
Now consider the curves $\gamma_a$ for $a\in \A$ defined in Section \ref{sec:curveandchain}. All that remains to show is that they have length bounded by $8 \log(4g)$. 
In particular, for $a\in \A$, let the end geodesics of $a$ be $\gamma_1$ and $\gamma_2$, and by construction they are both of length at most $2 R_g$. Recall $\gamma_a$ is in the free homotopy class of $\gamma_1 * a * \gamma_2 * a^{-1}$ and because $\ell(a) \leq \log(2g)$, we have
$$
\ell(\gamma_a) < 4 R_g + 4 \log(4g) < 8 \log(4g)
$$
proving the result.
\end{proof}

Before passing to the next section, we prove a lemma that we will need to control twist parameters. Here the sets $\Gamma$ and $\Gamma_\A$ are the short curve and chain system from the theorem above. The goal is to do the following: for each $\gamma \in \Gamma_1:=\Gamma \setminus \Gamma_0$ we want to choose a transversal curve $\delta_\gamma$ that is not too long. The specific result we prove is the following.

\begin{lemma}\label{lem:trans}
For each $\gamma \in \Gamma \setminus \Gamma_0$ there exists a simple closed geodesic $\delta_\gamma$ such that $i(\gamma,\delta_\gamma) \leq 2$
and 
$$
\ell(\delta_\gamma) <14 \log(4g)
$$
Furthermore there is a choice of such a $\delta_\gamma$ that is only determined by the topological type of a marked curve and chain system.
\end{lemma}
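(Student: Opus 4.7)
Fix $\gamma\in\Gamma\setminus\Gamma_0$, so that $2\arcsinh(1)\leq \ell(\gamma)\leq 2\log(4g)$ and, by Theorem \ref{thm:cclength}, every arc $a\in\A$ satisfies $\ell(a)\leq 2\log(4g)$. The plan is to produce $\delta_\gamma$ as a simple closed curve built from a small, combinatorially prescribed collection of arcs of $\A$ and sub-arcs of curves of $\Gamma$, and then bound the geometric length by accumulating at most seven contributions each of size at most $2\log(4g)$.

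Using the marking of $\A$, enumerate the arcs of $\A$ which touch $\gamma$ in a canonical order. Since $\A$ is a maximal arc system on $X\setminus\Gamma$, each side $\gamma^+,\gamma^-$ of $\gamma$ in $X\setminus\Gamma$ is incident to at least one such arc. Distinguish two cases for building $\delta_\gamma$ topologically.

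\textit{Case 1: some $a\in\A$ has its two endpoints on the two opposite sides of $\gamma$ in $X\setminus\Gamma$.} Take the first such arc in the marked order; it views in $X$ as an arc with both endpoints on $\gamma$ approached from opposite sides. Close it up with the canonical sub-arc of $\gamma$ (the one selected by the marking convention used for twist parameters in Section \ref{sec:curveandchain}). The result is a simple closed curve whose mod-$2$ intersection with $\gamma$ is $1$, so its geodesic representative $\delta_\gamma$ satisfies $i(\gamma,\delta_\gamma)=1$ and
\[
\ell(\delta_\gamma)\leq \ell(a)+\ell(\gamma)\leq 4\log(4g).
\]

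\textit{Case 2: every $a\in\A$ touching $\gamma$ stays on a single side of $\gamma$ in $X\setminus\Gamma$.} Select the first marked arc $a^+$ ending on $\gamma^+$ and the first marked arc $a^-$ ending on $\gamma^-$; let $\gamma^+_\ast,\gamma^-_\ast\in\Gamma$ be the curves containing their other endpoints. Using the combinatorial structure of the hexagonal decomposition of $X\setminus\Gamma$, the marking picks out a canonical short walk from $\gamma^+_\ast$ to $\gamma^-_\ast$ through a bounded number of hexagon sides (arcs of $\A$ and sub-arcs of curves of $\Gamma$). Concatenate this walk with $a^+,a^-$ and close up along a sub-arc of $\gamma$. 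The resulting curve is simple (up to taking a shortest combinatorial choice) and crosses $\gamma$ exactly twice; tallying the ingredients gives at most seven pieces each of length $\leq 2\log(4g)$, hence $\ell(\delta_\gamma)<14\log(4g)$. That $\delta_\gamma$ is simple and satisfies $i(\gamma,\delta_\gamma)\leq 2$ follows from the fact that the geodesic representative of a simple closed curve is simple and realises the minimum intersection number in its isotopy class.

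The main obstacle lies in Case 2: one must guarantee that the marking prescribes a walk using only a bounded number of hexagon-hops whose geometric concatenation is $<14\log(4g)$ and whose isotopy class is that of a simple curve intersecting $\gamma$ at most twice. The bound on the number of hops is the crucial point; it should follow because at each hop either the walk can be closed up (reducing to a Case-1-type construction), or the hexagon structure forces progress toward $\gamma^-_\ast$. Once the walk exists, bounding each ingredient by $2\log(4g)$ via Theorem \ref{thm:cclength} is immediate, and the topological canonicity of the choice is automatic from the marking, giving the final clause of the lemma.
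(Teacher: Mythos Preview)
Your Case 1 matches the paper's argument exactly. The problem is entirely in Case 2, and it is a genuine gap rather than a minor omission.

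You propose to build $\delta_\gamma$ by taking an arc $a^+$ from $\gamma^+$ to some $\gamma^+_\ast\in\Gamma$, an arc $a^-$ from $\gamma^-$ to some $\gamma^-_\ast\in\Gamma$, and then a ``canonical short walk'' through hexagons from $\gamma^+_\ast$ to $\gamma^-_\ast$. But there is no reason this walk is short: the hexagon adjacency graph on $X\setminus\Gamma$ has $4g-4$ vertices and its diameter can be of order $g$, so the resulting curve could have length of order $g\log g$, not $14\log(4g)$. Your heuristic that ``the hexagon structure forces progress toward $\gamma^-_\ast$'' is not justified and is in general false; the two sides $\gamma^+,\gamma^-$ may even lie in different components of $X\setminus\Gamma$, in which case no such walk exists inside $X\setminus\Gamma$ at all.

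The paper avoids this entirely by \emph{not} trying to connect the two sides. On each side $i\in\{1,2\}$ separately it builds an arc $b_i$ with both endpoints on $\gamma_i$: if some $a_i\in\A$ already has both endpoints on $\gamma_i$, use it; otherwise take any $a_i\in\A$ from $\gamma_i$ to some $\gamma'_i\in\Gamma$ and set $b_i=a_i\ast\gamma'_i\ast a_i^{-1}$, which has length $<6\log(4g)$. The three curves $b_1,b_2,\gamma$ then fill a four-holed sphere, and a standard cut-and-paste on that four-holed sphere produces a simple $\delta_\gamma$ with $i(\gamma,\delta_\gamma)=2$ and $\ell(\delta_\gamma)\le\ell(b_1)+\ell(b_2)+\ell(\gamma)<14\log(4g)$. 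The key idea you are missing is this ``go out and come back along the same arc'' trick, which keeps the construction local to each side and makes the length bound immediate.
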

\begin{proof}
Consider $\gamma \in \Gamma \setminus \Gamma_0$ and consider its two copies on $X_0\setminus \Gamma$, say $\gamma_1$ and $\gamma_2$. Note $\gamma$ can be given an orientation given by an orientation of the surface. 

If there is an arc $a \in \A$ joining $\gamma_1$ to $\gamma_2$, then consider the closed curve $\delta_\gamma$ on $X$ obtained by concatenating $a$ with the oriented sub-arc of $\gamma$ between the two endpoints of $a$ on $\gamma$ (see Figure \ref{fig:oneholedtorus}).

\begin{figure}[h]
{\color{linkred}
\leavevmode \SetLabels
\L(.31*.25) $a$\\%
\L(.17*.375) $\gamma$\\
\L(.734*.27) $\delta_\gamma$\\%
\endSetLabels
\begin{center}
\AffixLabels{\centerline{\epsfig{file =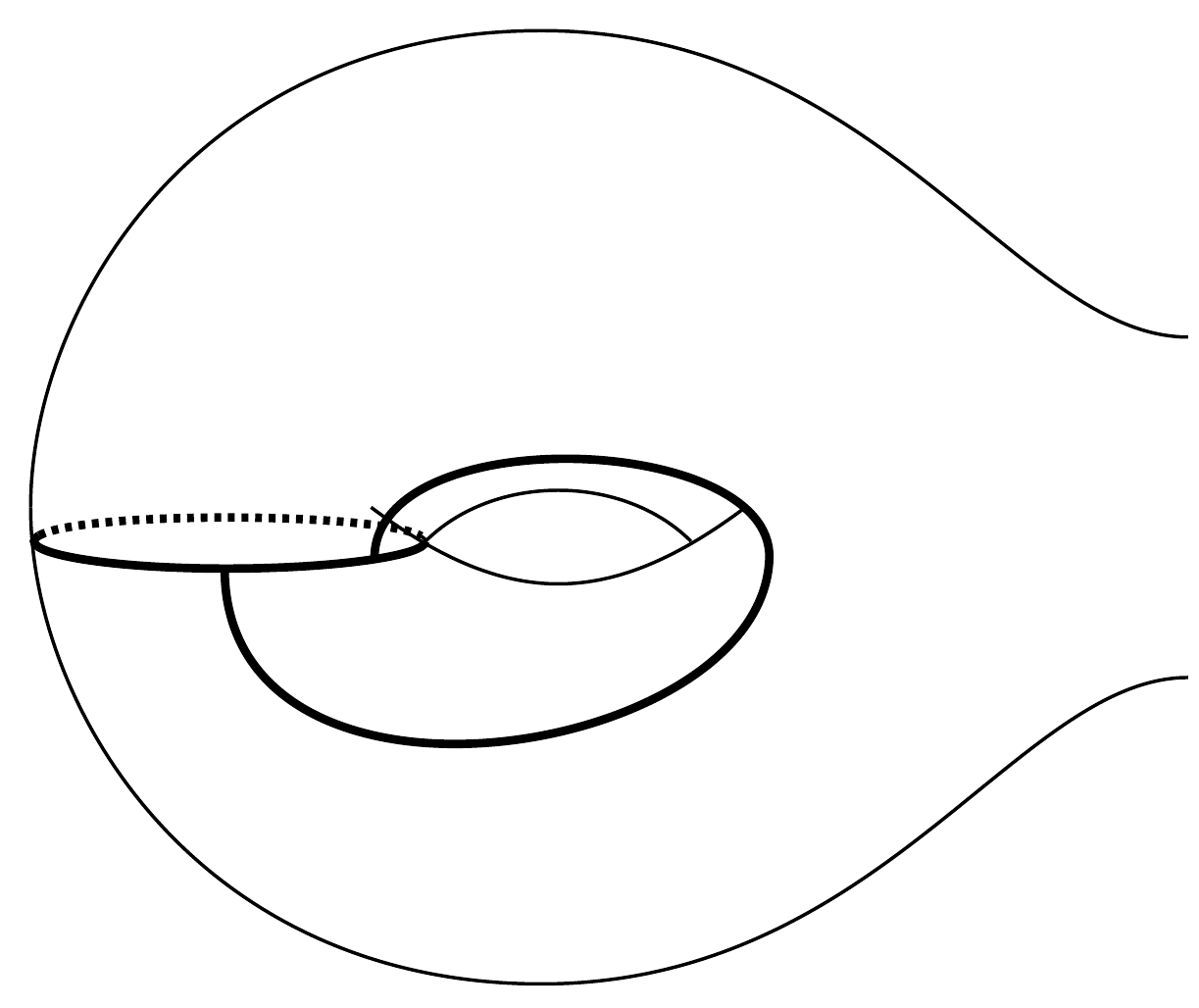,width=4.5cm,angle=0}\hspace{50pt}\epsfig{file =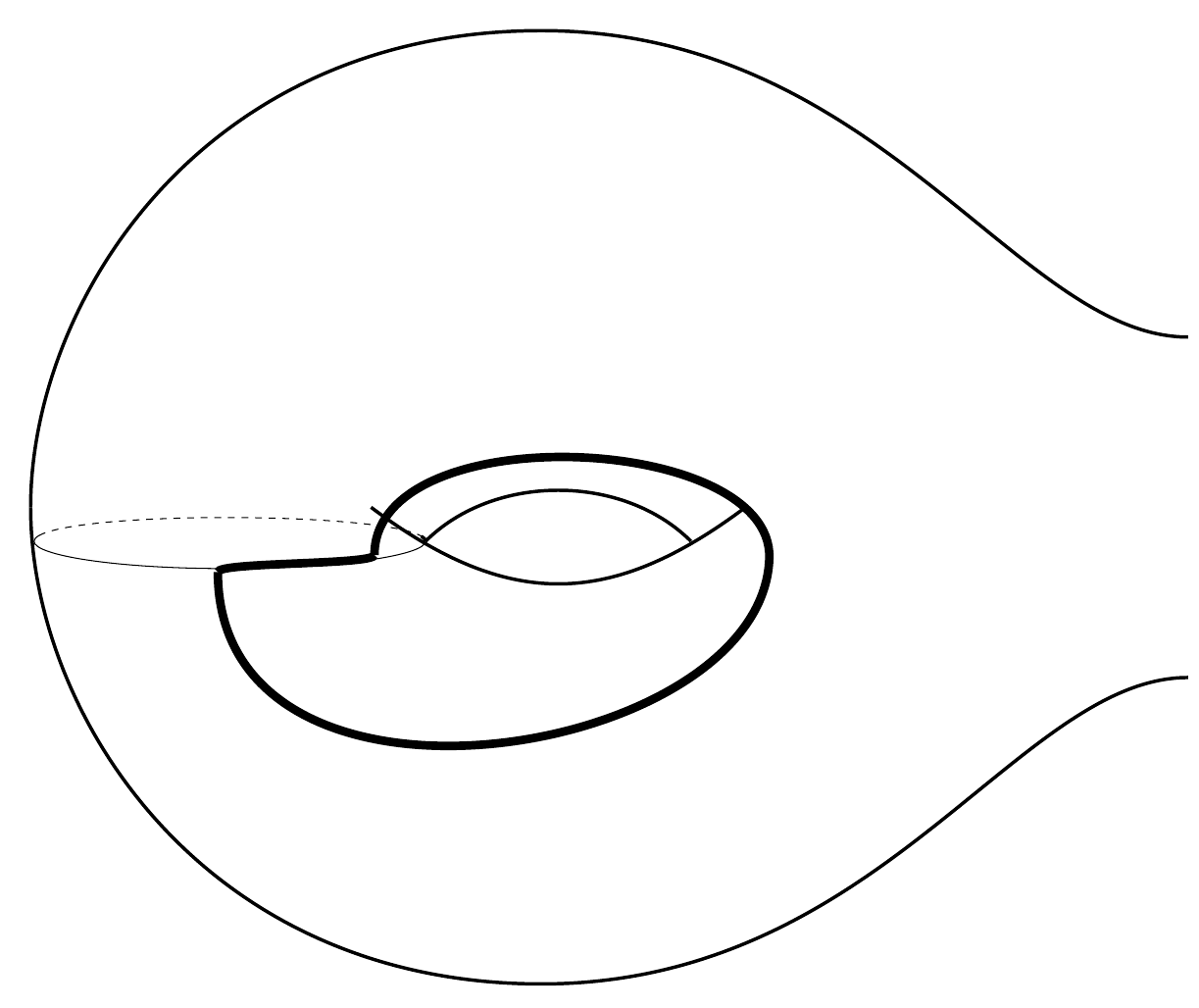,width=4.5cm,angle=0}}}
\vspace{-24pt}
\end{center}
}
\caption{The one holed torus and the construction of $\delta_\gamma$} \label{fig:oneholedtorus}
\end{figure}

By construction $\ell(\delta_\gamma) \leq \ell(a) +  \ell(\gamma) < 2\log(4g) + 2\log(4g) = 4 \log(4g)$
and $i(\gamma,\delta_\gamma)=1$.

We could possibly do better, by taking the shortest one, but we want to make a choice that only depends on the topology of a marked curve and chain system.

If no such arc exists, then for $\gamma_i$, for $i=1,2$ we construct an arc that has both endpoints on $\gamma_i$. Either there is an arc $a_i \in \A$ which does this, in which case, we use it, or no such arc exists in which case we consider any arc $a_i\in \A$ with an endpoint on $\gamma_i$. The other endpoint of $a_i$ must lie on another $\gamma'_i \in \Gamma$. Giving the arc and $\gamma'_i$ the appropriate orientation, we construct a homotopy class of arc $b_i$ by considering the concatenation
$$
a_i * \gamma'_i *a_i^{-1}
$$
Note that $b_1,b_2$ and $\gamma$ fill a four holed sphere. We also have
$$
\ell(b_i) < 6 \log(4g)
$$
using the bound on lengths of the concatenated paths. Now the shortest curve that essentially intersects $\gamma$ exactly twice on this four holed sphere has length at most
$$
\ell(b_1) +\ell(b_2) + \ell(\gamma)
$$
by the same type of cut-and-concatenate argument as before (see Figure \ref{fig:fourholedsphere}). 
\begin{figure}[h]
{\color{linkred}
\leavevmode \SetLabels
\L(.23*.61) $b_1$\\%
\L(.33*.24) $b_2$\\%
\L(.20*.39) $\gamma$\\
\L(.74*.27) $\delta_\gamma$\\%
\endSetLabels
\begin{center}
\AffixLabels{\centerline{\epsfig{file =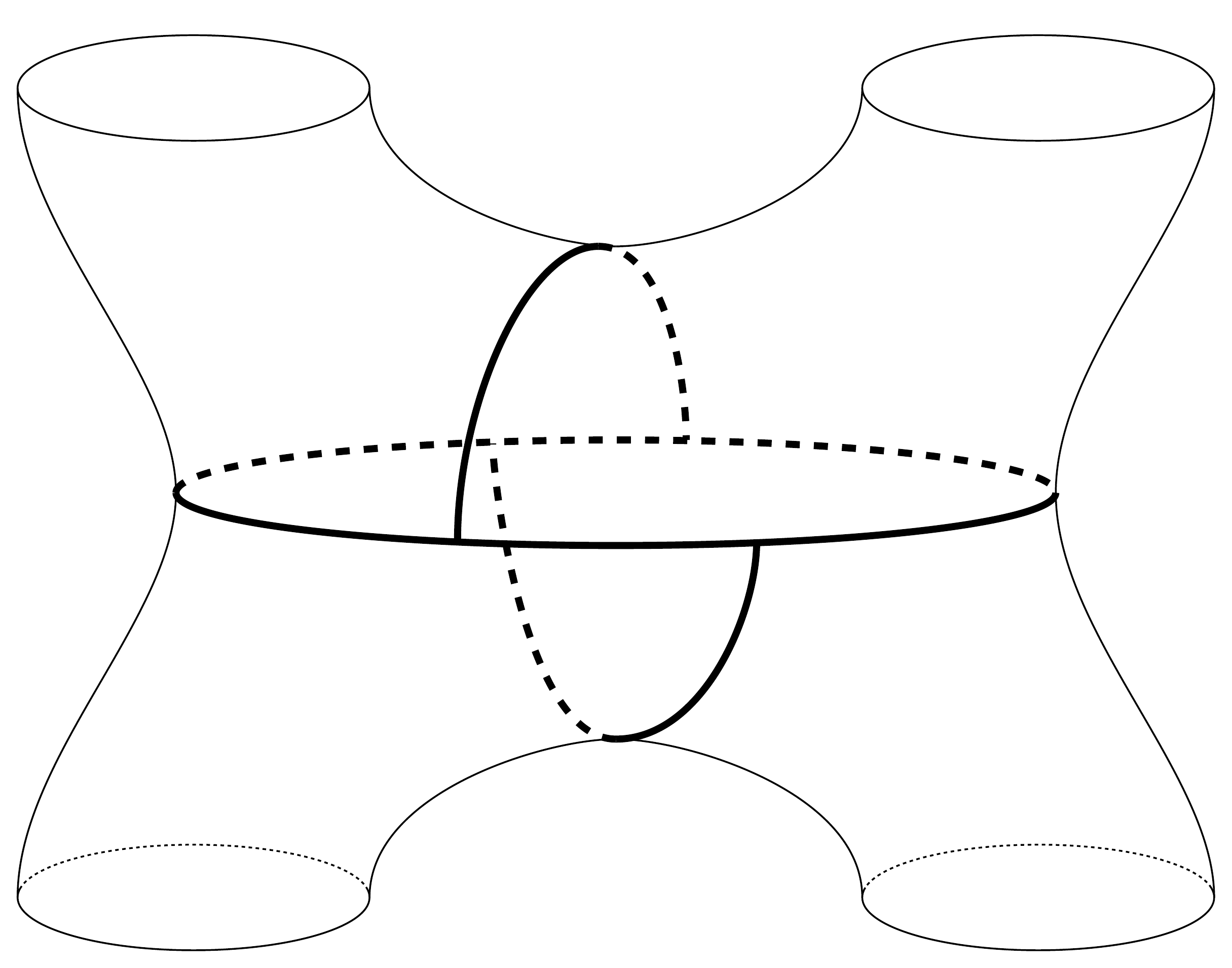,width=5.5cm,angle=0}\hspace{20pt}\epsfig{file =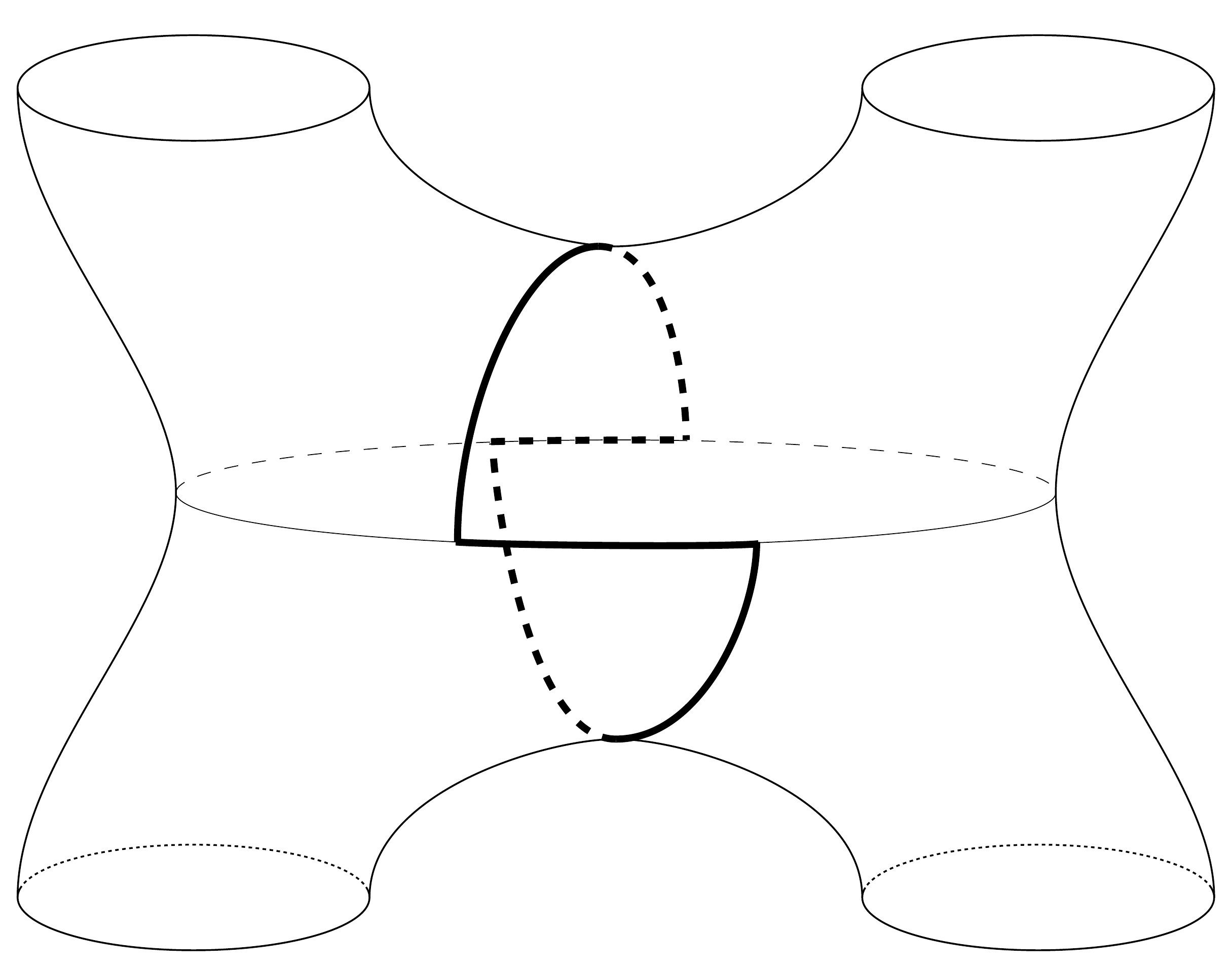,width=5.5cm,angle=0}}}
\vspace{-24pt}
\end{center}
}
\caption{The four holed sphere and the construction of $\delta_\gamma$} \label{fig:fourholedsphere}
\end{figure}

The resulting curve we denote $\delta_\gamma$ and we have
$$
\ell(\delta_\gamma) < 14 \log(4g)
$$
as desired.
\end{proof}

We'll denote by $\Delta_{\Gamma_1}$ the set of curves obtained as in the above proof. Note they are determined if we know the topological type of the curve chain decomposition $\Gamma, \Gamma_\A$. 

\section{Dealing with thin structures using the topology of arcs and length identities}\label{sec:thin}

Here we show how to identify, up to quantifiable finiteness, the isometry class of a surface using the isometry type of its thick part. The reason this requires a different analysis is because we have to determine a twist parameter using curves that could be arbitrarily long. This means we can't use estimates based on their length.  

Recall that given $X\in \M_g$, we've denoted by $\Gamma_0(X)$ the set of closed geodesics of length less than $2 \arcsinh(1)$. We denote by
$$
X_0 := X \setminus \Gamma_0
$$
This is a slight abuse of notation as we used $X_0$ before for surface obtained by removing the collars of the short curves, but as they determine each other, for simplicity we'll use it here too. The goal is to determine $X$ using $X_0$ and $\Lambda(X)$, and to do so we'll proceed one curve at a time.

\subsection{The topology of the next shortest curve}

Let $Y\subset X$ be a subsurface obtained by cutting along some subset of $\Gamma_0$ and let $\alpha \subset Y$ that belongs to $\Gamma_0$. Set $Y_\alpha := Y \setminus \alpha$. Denote by $\alpha_1,\alpha_2$ the two copies of $\alpha$ on $Y_\alpha$. 
We want to determine $Y$ knowing $\Lambda(Y)$ and $Y_\alpha$. 

Given $Y_\alpha$ and $\Lambda(Y)$, we know the set $\Lambda(Y) \setminus \Lambda(Y_\alpha)$. The first element of this set corresponds to the shortest closed geodesic of $Y$ which is not entirely contained in $Y_\alpha$. Let us denote the corresponding closed geodesic $\alpha'$ and observe that necessary $i(\alpha,\beta) \neq 0$. First we discuss the topology of $\alpha'$. 

\begin{lemma}\label{lem:nexttop}
The curve $\alpha'$ is a simple closed geodesic which is one of two types:

- Type (I): $i(\alpha,\alpha') = 1$ and the two curves fill a one holed torus,

- Type (II): $i(\alpha,\alpha') = 2$ and the two curves fill an embedded four holed sphere.
\end{lemma}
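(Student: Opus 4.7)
Plan. The strategy is to use the minimality of $\ell(\alpha')$ (it is by definition the shortest closed geodesic of $Y$ crossing $\alpha$) together with surgery constructions; since $\alpha\in\Gamma_0$, the collar lemma gives a wide collar of half-width $w\ge\arcsinh(1)$, which is the main quantitative ingredient. For simplicity of $\alpha'$, suppose $\alpha'$ has a self-intersection at $p$. Write $\alpha'=\sigma\cdot\tau$ in $\pi_1(Y,p)$, where $\sigma,\tau$ are two loops meeting at a corner at $p$. Their geodesic representatives $\gamma_\sigma,\gamma_\tau$ satisfy $\ell(\gamma_\sigma)+\ell(\gamma_\tau)<\ell(\alpha')$ by strict shortening at the corner, and by subadditivity of geometric intersection, $i(\alpha,\sigma)+i(\alpha,\tau)\ge i(\alpha,\alpha')\ge 1$, so at least one of them has positive intersection with $\alpha$; its primitive covering geodesic is then a shorter crossing geodesic, contradicting minimality.

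For the intersection bound $i(\alpha,\alpha')\le 2$, suppose $k:=i(\alpha,\alpha')\ge 3$. Among the $k$ arcs of $\alpha'$ between consecutive intersections with $\alpha$, let $a$ be a shortest and let $b$ be the shorter of the two arcs of $\alpha$ joining its endpoints $p,q$; so $\ell(b)\le\ell(\alpha)/2<\arcsinh(1)$. Since $\alpha$ and $\alpha'$ are geodesics they cannot form a bigon, so the loop $a\cdot b^{-1}$ is essential, which forces $\ell(a)+\ell(b)\ge\ell(\alpha)>2\ell(b)$ and hence $\ell(a)>\ell(b)$. The surgered closed curve $\gamma$ obtained by replacing $a$ with $b$ in $\alpha'$ therefore satisfies $\ell(\gamma)=\ell(\alpha')-\ell(a)+\ell(b)<\ell(\alpha')$, and $\gamma$ inherits the $k-2\ge 1$ crossings of $\alpha$ coming from $\alpha'\setminus a$ (the two endpoint crossings disappear once $b$ is pushed off $\alpha$ to the side consistent with the incoming and outgoing tangent vectors of $\alpha'$ at $p,q$). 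Consequently $i(\alpha,\gamma)\ge 1$, and the primitive geodesic covered by $\gamma$ contradicts the minimality of $\alpha'$.

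For the filling subsurface, the regular neighborhood $N$ of $\alpha\cup\alpha'$ has $\chi(N)=-i(\alpha,\alpha')$. If $i=1$, then $\chi=-1$ and $N$ must be a one-holed torus, since a pair of pants contains no pair of essentially intersecting simple closed curves. If $i=2$, then $\chi=-2$ and $N$ is either a four-holed sphere (algebraic intersection $0$) or a two-holed torus (algebraic intersection $\pm 2$); the latter would contain a simple closed meridian $\beta$ with $i(\beta,\alpha)=1$, and the relation $\alpha'=\alpha^n\beta^2$ in $\pi_1$ of the one-holed-torus part together with the triangle inequality yields $2\ell(\beta)\le n\ell(\alpha)+\ell(\alpha')$. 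Combined with minimality $\ell(\beta)\ge\ell(\alpha')$, the bound $\ell(\alpha)<2\arcsinh(1)$, and the collar lower bound $\ell(\alpha')\ge 4\arcsinh(1)$, this produces a contradiction once one reduces via Dehn-twist symmetry to the length-minimizing $n=\pm 1$.

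The hardest part will be verifying in the intersection-bound step that the surviving crossings of $\gamma$ with $\alpha$ are truly essential under free homotopy (equivalently, that $\gamma$ is not freely homotopic into $Y\setminus\alpha$); this requires exploiting the splitting of $\pi_1(Y)$ as an amalgamated product or HNN extension over $\pi_1(\alpha)$ so that the Bass--Serre length of $\gamma$ with respect to this splitting is positive. The analogous delicate point in the filling step is the Dehn-twist reduction ruling out the two-holed torus case, where one must justify that the length-minimizing twist among the simple $(n,2)$-curves in the filling corresponds to $n=\pm 1$.
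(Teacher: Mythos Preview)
Your plan has the right shape, but each of its three pieces contains a genuine gap.

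\textbf{Simplicity.} The ``subadditivity'' $i(\alpha,\sigma)+i(\alpha,\tau)\ge i(\alpha,\sigma\tau)$ is false. Work in the Bass--Serre tree for the splitting over $\langle\alpha\rangle$: if $\sigma,\tau$ are elliptic with \emph{disjoint} fixed-point sets, then $\sigma\tau$ is hyperbolic, so $i(\alpha,\sigma)=i(\alpha,\tau)=0$ while $i(\alpha,\sigma\tau)>0$. Concretely, take $\alpha$ separating $Y$ into $Y_1,Y_2$, base at $p\in\alpha$, and choose $\sigma\in\pi_1(Y_1)\setminus\langle\alpha\rangle$, $\tau\in\pi_1(Y_2)\setminus\langle\alpha\rangle$. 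There is a fix (use the third smoothing $\sigma\tau^{-1}$: if all of $\sigma,\tau,\sigma\tau^{-1}$ were elliptic one reaches a contradiction), but as written the step fails.

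\textbf{Intersection bound.} The assertion ``$a\cdot b^{-1}$ essential forces $\ell(a)+\ell(b)\ge\ell(\alpha)$'' has no justification: $\alpha$ need not be the systole of $Y$. The correct reason $\ell(a)>\ell(b)$ is simply that $a$ must exit the half-collar of $\alpha$ (else it would form a bigon with $\alpha$), giving $\ell(a)\ge 2w\ge 2\arcsinh(1)>\ell(\alpha)/2\ge\ell(b)$. More seriously, you yourself flag the real difficulty: showing the surgered curve $\gamma$ still has $i(\alpha,\gamma)\ge 1$. Your Bass--Serre suggestion is not automatic: replacing the syllable corresponding to $a$ by a power of the edge-group generator can trigger further reductions in the normal form, so the translation length may drop below $k-2$, possibly to $0$. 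Making this work is a real argument, not a remark.

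\textbf{Ruling out the two-holed torus.} The relation $\alpha'=\alpha^n\beta^2$ does not hold in $\pi_1$ of the one-holed torus: simple closed curves of slope $(n,2)$ are not represented by the word $\alpha^n\beta^2$. The subsequent Dehn-twist reduction to $n=\pm 1$ is also not justified; you would need to argue that the length-minimising twist of $\alpha'$ occurs there, which depends on the geometry.

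\textbf{Comparison with the paper.} The paper avoids all of this by working directly on $Y_\alpha=Y\setminus\alpha$ with the arcs $a_1,\dots,a_m$ that $\alpha'$ traces out. The collar lemma gives $\ell(a_j)\ge 2\arcsinh(1)$ for every arc. If some $a_j$ runs between the two copies $\alpha_1,\alpha_2$, a single concatenation with a subarc of $\alpha$ produces a shorter crossing curve unless $m=1$ (type I). If no arc does, then all arcs have both endpoints on one $\alpha_i$; a short surgery argument then forces exactly one such arc on each side (type II). This arc-by-arc analysis simultaneously yields simplicity of each arc, the bound $i(\alpha,\alpha')\le 2$, and the exclusion of the two-holed torus configuration (since that case would produce two arcs from $\alpha_1$ to $\alpha_2$, already ruled out), without any appeal to Bass--Serre normal forms or Dehn-twist minimisation.
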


\begin{proof}
This will all follow from standard cut and paste arguments and the fact that $\ell(\alpha)\leq 2 \arcsinh(1)$.

Consider the image of $\alpha'$ on $Y_\alpha$ is a collection of arcs $a_1,\hdots,a_m$ with endpoints on $\alpha_1 \cup \alpha_2$. In the course of the proof, we'll show that the arcs are all simple. Note that if there is only one and it is simple, then it lies between $\alpha_1$ and $\alpha_2$, thus $i(\alpha,\alpha') = 1$ and $\alpha'$ is of type (I). 

Note that because $\ell(\alpha)\leq 2 \arcsinh(1)$, by Lemma \ref{lem:collar} there is an embedded collar of width $\arcsinh(1)$ around $\alpha$. In particular, any of these arcs is of length at least $2\arcsinh(1)$. 

In particular that if there is an arc $a_i$ between $\alpha_1$ and $\alpha_2$, then it is the only arc. Indeed, if there was another one, then $\ell(\alpha') > 2\arcsinh(1) + \ell(a_i)$. The distance between $\alpha_1$ and $\alpha_2$ is at most $\ell(a_i)$. Denote by $a$ a shortest distance path between $\alpha_1$ and $\alpha_2$. Note that $a$ is simple and $\ell(a) \leq \ell(a_i)$. 

Now by concatenating $a$ and a subarc of $\alpha$, one can construct an essential curve $\alpha''$ that intersects $\alpha$ exactly once. And as $\ell(\alpha) \leq 2 \arcsinh(1)$, this implies that
$$
\ell(\alpha'') \leq 2 \arcsinh(1) + \ell(a)\leq 2 \arcsinh(1) + \ell(a_i)<\ell(\alpha')
$$
a contradiction. This proves that if $a_1,\hdots,a_m$ contains an arc between $\alpha_1$ and $\alpha_2$, then $m=1$, it is simple and $\alpha'$ is of type (I).

If not, then all arcs of $a_1,\hdots,a_m$ have both endpoints on either $\alpha_1$ or $\alpha_2$. Denote by $a_1,\hdots,a_{m_1}$ those with endpoints on $\alpha_1$. Note that $m=2m_1$ as the number of endpoints of the arcs must be the same on both $\alpha_1$ and $\alpha_2$.

{\it Claim:} Any arc $a\in \{a_1,\hdots,a_m\}$ must be simple. 

{\it Proof of claim:} The basic idea is to do surgery in a point of self-intersection, but we want to be careful to ensure that the surgery doesn't produce a trivial arc. First observe that by minimality of $\alpha'$, $a$ is a shortest non-trivial arc between its endpoints (both on $\alpha_i$ for $i\in \{1,2\}$). Now if $a$ is not simple, it contains as a subarc a simple embedded loop $\hat{a}$ based in a point $p$ (this is true of any non-trivial non-simple arc). So the two subarcs of $a$ between $\alpha_i$ and $p$ are distance realizing and hence simple and disjoint. Denote them by $d_1$ and $d_2$. We now give orientations to $a, d_1,d_2$ and $\hat{a}$, so that $a = d_1 * \hat{a} *d_2$. The arc $a' = d_1 * \hat{a}^{-1}*d_2$ is non-trivial and the unique geodesic in its homotopy class (with endpoints fixed) is shorter than $a$, a contradiction and this proves the claim.

For $i=1,2$, we consider the shortest non-trivial arc $b_i$ with both of its endpoints on $\alpha_i$. As above $b_1$ and $b_2$ are simple and by minimality they satisfy the following: $\ell(b_1) \leq \ell(a_j)$ for $j\leq m_1$ and $\ell(b_2) \leq \ell(a_j)$ for $j> m_1$.


We now observe that $i(b_1,b_2)=0$. If not, by concatenating the appropriate subarcs, we can construct a path of length at most $\max_{i\in\{1,\hdots,m\}}\{\ell(a_i)\}$ between $\alpha_1$ and $\alpha_2$. The same argument used when there was an arc $a_i$ between $\alpha_1$ and $\alpha_2$ can be repeated, and we reach a contradiction. Similarly, if $m_1>1$, then we can also reproduce this argument.

To resume, this shows that unless $\alpha'$ is of type (I), the image of $\alpha'$ on $Y_\alpha$ consists of two simple disjoint arcs $a_1, a_2$ with base points on $\alpha_1$ and $\alpha_2$. Thus $\alpha,\alpha'$ fill a four holed sphere on $Y$ as claimed and $\alpha'$ is of type (II).
\end{proof}

We now deal with counting possible homotopy classes for $\alpha'$ of Type (I).

\subsubsection{Type (I) next shortest curves}

If $\alpha'$ is of type (I), then the projection (or restriction) of $\alpha'$ to $Y_\alpha$ is a simple arc between $\alpha_1$ and $\alpha_2$. Denote by $a$ the unique orthogeodesic in its free homotopy class. Given $a$, we can basically reconstruct $\alpha'$ using the fact that $\alpha'$ is the shortest closed geodesic intersecting $\alpha$. Indeed, $\alpha'$ is homotopic to the concatenation of the (or a) shortest subarc of $\alpha$ between the two endpoints of $a$. This is because the length of $\alpha'$ is a function of the length of the subarc of $\alpha$ and the length of $a$, and is monotonic increasing in both quantities. As there are at most two shortest subarcs of $\alpha$ between the two endpoints of $a$, knowing $a$ will determine one of two possible homotopy classes for $\alpha'$. 

\begin{figure}[h]
{\color{linkred}
\leavevmode \SetLabels
\L(.185*1.02) $\alpha_1$\\%
\L(.38*1.02) $\alpha_2$\\%
\L(.595*1.02) $\alpha_1$\\%
\L(.795*1.02) $\alpha_2$\\%
\L(.195*.28) $\alpha'$\\
\L(.74*.27) $a$\\%
\endSetLabels
\begin{center}
\AffixLabels{\centerline{\epsfig{file =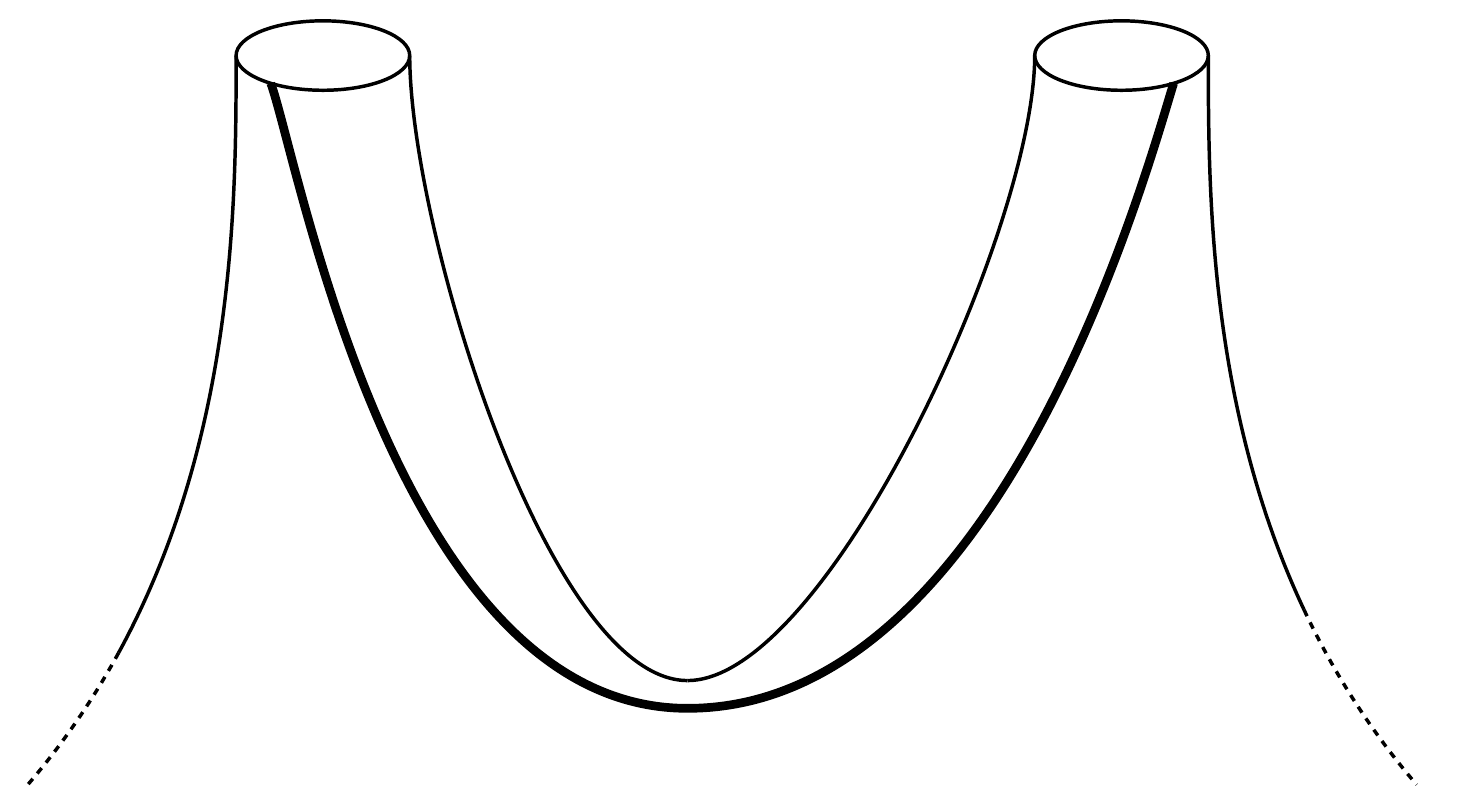,width=5.5cm,angle=0}\hspace{20pt}\epsfig{file =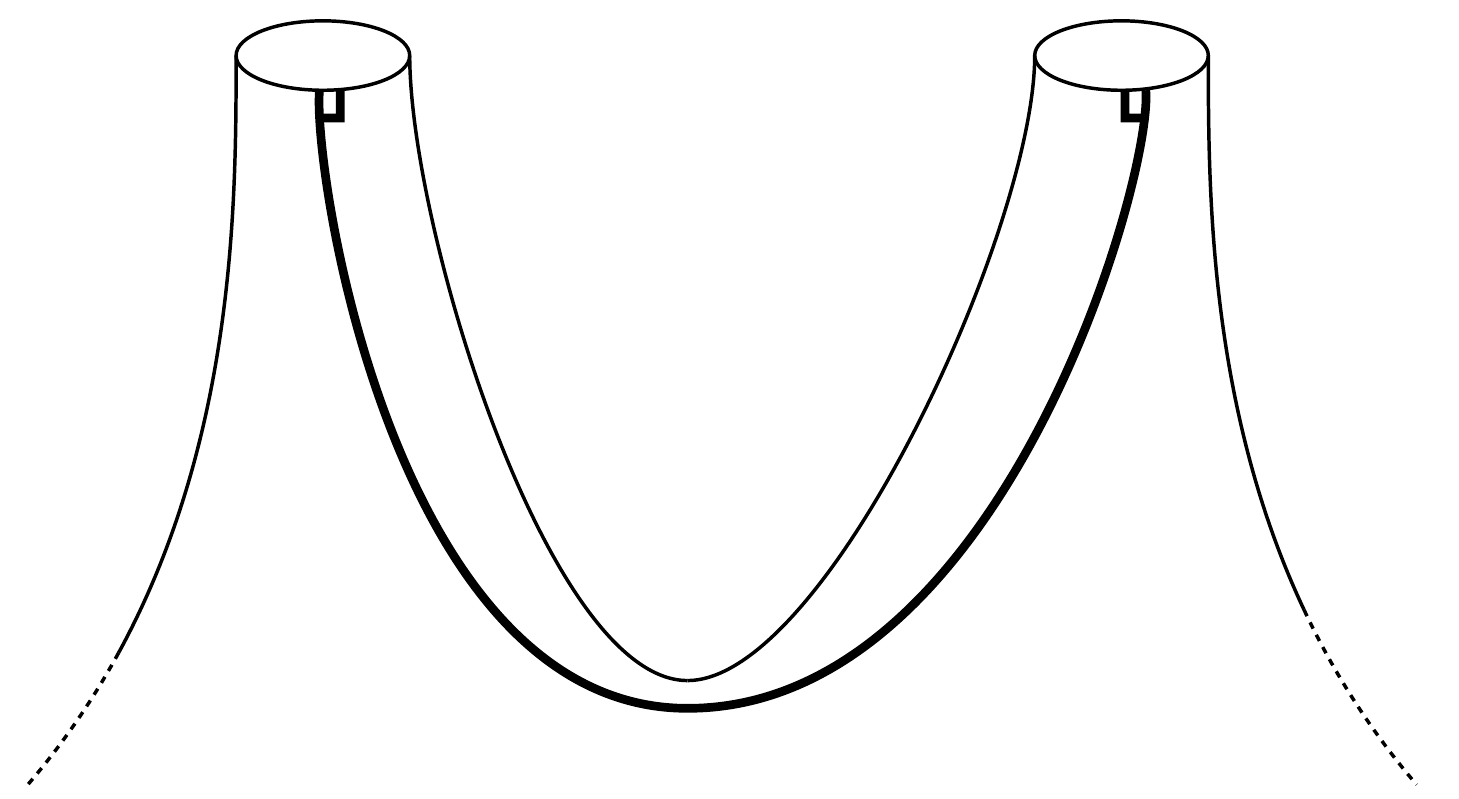,width=5.5cm,angle=0}}}
\vspace{-24pt}
\end{center}
}
\caption{When $\alpha'$ is of type (I)} \label{fig:typeIarcs}
\end{figure}

To resume the above discussion, if we determine all possible homotopy classes of orthogeodesics $a$, this will allow us to determine all possible homotopy classes of $\alpha'$. 

This observation will allow us to use a result of Przytycki \cite{Przytycki}. 

\begin{proposition}\label{prop:typeI}
There are at most $
(8g-8)(2g-1)<16(g-1)^2$ type (I) next shortest curves hence at most $
2(8g-8)(2g-1)< 32 (g-1)^2$ isometry classes of $Y$ when $\alpha'$ is of type (I). 
\end{proposition}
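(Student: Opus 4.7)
The plan is to reduce the count of candidate type (I) next-shortest curves $\alpha'$ to a count of simple arcs in $Y_\alpha$, and then invoke Przytycki's theorem. By the discussion preceding the proposition, each type (I) candidate $\alpha'$ restricts to a simple arc $a$ between the two copies $\alpha_1, \alpha_2$ of $\alpha$ on $Y_\alpha$, and conversely knowing $a$ determines $\alpha'$ up to a two-fold ambiguity (the two subarcs of $\alpha$ that can be concatenated with $a$). So it suffices to prove that the number of isotopy classes of candidate arcs $a$ is at most $(8g-8)(2g-1)$, and then multiply by $2$ for the isometry count.

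The central step is to show that any two candidate arcs $a_1, a_2$ satisfy $i(a_1, a_2) \le 1$. Here I would use a bigon-surgery argument exploiting the defining property of $\alpha'$: among all homotopy classes of closed curves that intersect $\alpha$ exactly once, $\alpha'$ is one of shortest length. If $a_1$ and $a_2$ crossed twice (or formed a bigon in their geodesic representatives), then swapping strands at the intersections produces two new simple arcs $a_1', a_2'$ joining $\alpha_1$ to $\alpha_2$, each with total length strictly less than the longer of $\ell(a_1), \ell(a_2)$. Concatenating one of these with an appropriate subarc of $\alpha$ yields a simple closed curve intersecting $\alpha$ exactly once whose length is strictly smaller than the candidate it replaced, contradicting minimality. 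This mirrors the surgery used in Lemma \ref{lem:nexttop} to show simplicity of the projection arcs.

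Once pairwise single-intersection is established, Przytycki's theorem \cite{Przytycki} applied to the bordered surface $Y_\alpha$ gives a polynomial bound on the number of such arcs, which in our setting takes the precise form $(8g-8)(2g-1)$ after evaluating the relevant topological complexity (Euler characteristic times number of boundary components). Multiplying by the factor $2$ accounting for the two subarcs of $\alpha$ then yields the bound $2(8g-8)(2g-1) < 32(g-1)^2$ on the isometry classes of $Y$.

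The main obstacle is the surgery step: one must ensure that the surgered arc still closes up, via some subarc of $\alpha$, into a \emph{simple} closed geodesic intersecting $\alpha$ exactly once, and one must verify that the minimality argument is meaningful across potentially different geometries $Y$ (different twist parameters along $\alpha$). The cleanest way to phrase this is intrinsically on $Y_\alpha$: the length of $\alpha'$ decomposes as the length of $a$ plus the length of a subarc of $\alpha$, both of which are monotone under the surgery, so the contradiction is obtained purely in terms of geometric data on $Y_\alpha$ together with $\ell(\alpha)$.
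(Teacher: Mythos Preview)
Your overall strategy matches the paper's: reduce to arcs on $Y_\alpha$, show the candidate arcs pairwise intersect at most once, and invoke Przytycki. However there are two points where your argument diverges from the paper's, one cosmetic and one a genuine gap.

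First, the intersection bound. Your surgery argument is more delicate than necessary, and you correctly flag its difficulty: when $a_1$ and $a_2$ arise from \emph{different} twist parameters, it is not clear against which minimality the surgered curve offends. The paper sidesteps this entirely. It observes that for each twist $t$, the restricted arc $a_t$ is a \emph{length-minimizing} path between its own endpoints on $Y_\alpha$: if a shorter path existed, it would close up on $Y^t$ (through the same point of $\alpha$) to an essential curve crossing $\alpha$ once and shorter than $\alpha'_t$. This is a property intrinsic to $Y_\alpha$ and independent of $t$. Now two minimizing geodesic arcs between their respective endpoints can meet at most once (else swap the inner segments of a bigon to shorten one of them). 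This is the same surgery you describe, but applied to a single arc's minimality between fixed endpoints, so no cross-twist comparison is needed.

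Second, and more seriously, your bookkeeping conflates two distinct factors of $2$ and omits the one that actually produces the isometry bound. Przytycki gives at most $2|\chi(Y_\alpha)|(|\chi(Y_\alpha)|+1) \le (4g-4)(2g-1)$ arc classes, not $(8g-8)(2g-1)$; the bound is quadratic in $|\chi|$, not ``Euler characteristic times number of boundary components''. The first doubling, from arcs to homotopy classes of $\alpha'$ (the two subarcs of $\alpha$), gets you to $(8g-8)(2g-1)$. But knowing the homotopy class of $\alpha'$ does \emph{not} determine $Y$: you still need the twist along $\alpha$. The paper recovers it by inserting the known value $\ell(\alpha')$ (the first element of $\Lambda(Y)\setminus\Lambda(Y_\alpha)$) and using convexity of geodesic length along the twist, which yields at most two solutions. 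That convexity step is absent from your proposal, and without it there is no passage from ``homotopy class of $\alpha'$'' to ``isometry class of $Y$''.
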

\begin{proof}
To show this, we consider all possible surfaces $Y^t_\alpha$ obtained from $Y_\alpha$ by fixing the twist parameter of $\alpha$ to be $t$ ($t\in [0.\ell(\alpha)]$). Among all of these, we consider the subset of them (possibly all or none of them) where $\alpha'$ is of type (I). For given $t$, we denote by $\alpha'_t$ the curve $\alpha'$. For each of these we look at the restriction of $\alpha'_t$ to $Y_\alpha \subset Y_\alpha^t$. It is a geodesic arc $a_t$ with endpoints on $\alpha_1$ and $\alpha_2$. 

This arc $a_t$ has the following property: on $Y_\alpha$ it is the shortest paths between its endpoints for the following reason. If there was a shorter path between its endpoints, this would give rise to a shorter curve on $Y^t_\alpha$ which intersects $\alpha$ exactly once. And, by the bigon property, such a curve is both essential and essentially intersects $\alpha$, contradicting the minimality of $\alpha'_t$. 

Consider two different such arcs, $a_{t}$ and $a_{s}$ coming from curves $\alpha'_t$ and $\alpha'_s$. Because both are minimal arcs between their endpoints, they cannot cross more than once. Thus $i(a_t, a_s) \leq 1$ for all $t,s \in [0,\ell(\alpha)]$. 

Now we pass to free homotopy classes, relative to boundary, of arcs, or equivalently to orthogeodesics in the homotopy classes of our collection of arcs. It's a result of Przytycki \cite{Przytycki} that their are at most $2 |\chi(Y_\alpha) | (| \chi(Y_\alpha) | +1)$ such homotopy classes. By the discussion above, this gives rise to at most 
$$
4|\chi(Y_\alpha) | (| \chi(Y_\alpha) | +1)
$$
possible homotopy classes for $\alpha'$. It's important to note here that we can do more than just bound their number. Our knowledge of $Y_\alpha$ allows us to determine the homotopy classes exactly.

Now to determine $Y$, for each possible homotopy class of $\alpha'$, we insert the smallest value of $\Lambda(Y) \setminus \Lambda(Y_\alpha)$. By the convexity of geodesic length functions, there are at most $2$ possible isometry classes of $Y$ with this length of $\alpha'$. 

Now $\chi(Y_\alpha) | \leq \chi(X) = 2g-2$ and the lemma follows.
\end{proof}

We will need to argue differently if $\alpha'$ is of type (II). 

\subsubsection{Type (II) next shortest curves}

The projection of $\alpha'$ to $Y_\alpha$ consists of two arcs, freely homotopic to two simple orthogeodesics $a_1,a_2$ with the endpoints of $a_1$ on $\alpha_1$ and the endpoints of $a_2$ on $\alpha_2$. As previously, given $a_1$ and $a_2$ there are two possibilities for the homotopy class of $\alpha'$. (Here we have to be more careful: there are more cases to consider depending on the relative positions of the endpoints of $a_1$ and $a_2$.)

\begin{figure}[h]
{\color{linkred}
\leavevmode \SetLabels
\L(.168*1.025) $\alpha_1$\\%
\L(.405*1.025) $\alpha_2$\\%
\L(.574*1.025) $\alpha_1$\\%
\L(.815*1.025) $\alpha_2$\\%
\L(.167*.28) $\alpha'$\\
\L(.407*.28) $\alpha'$\\
\L(.573*.27) $a_1$\\%
\L(.816*.27) $a_2$\\%
\endSetLabels
\begin{center}
\AffixLabels{\centerline{\epsfig{file =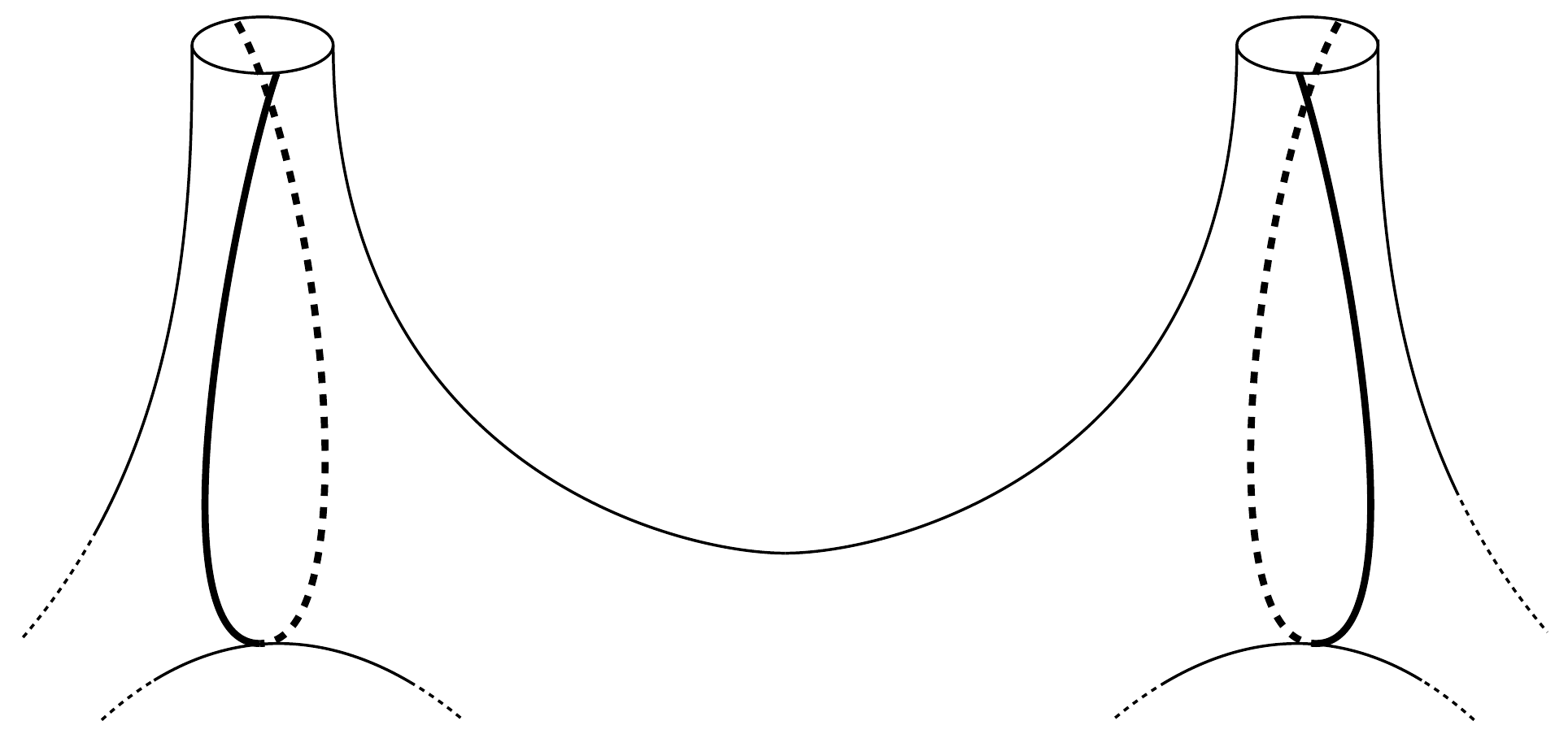,width=5.5cm,angle=0}\hspace{20pt}\epsfig{file =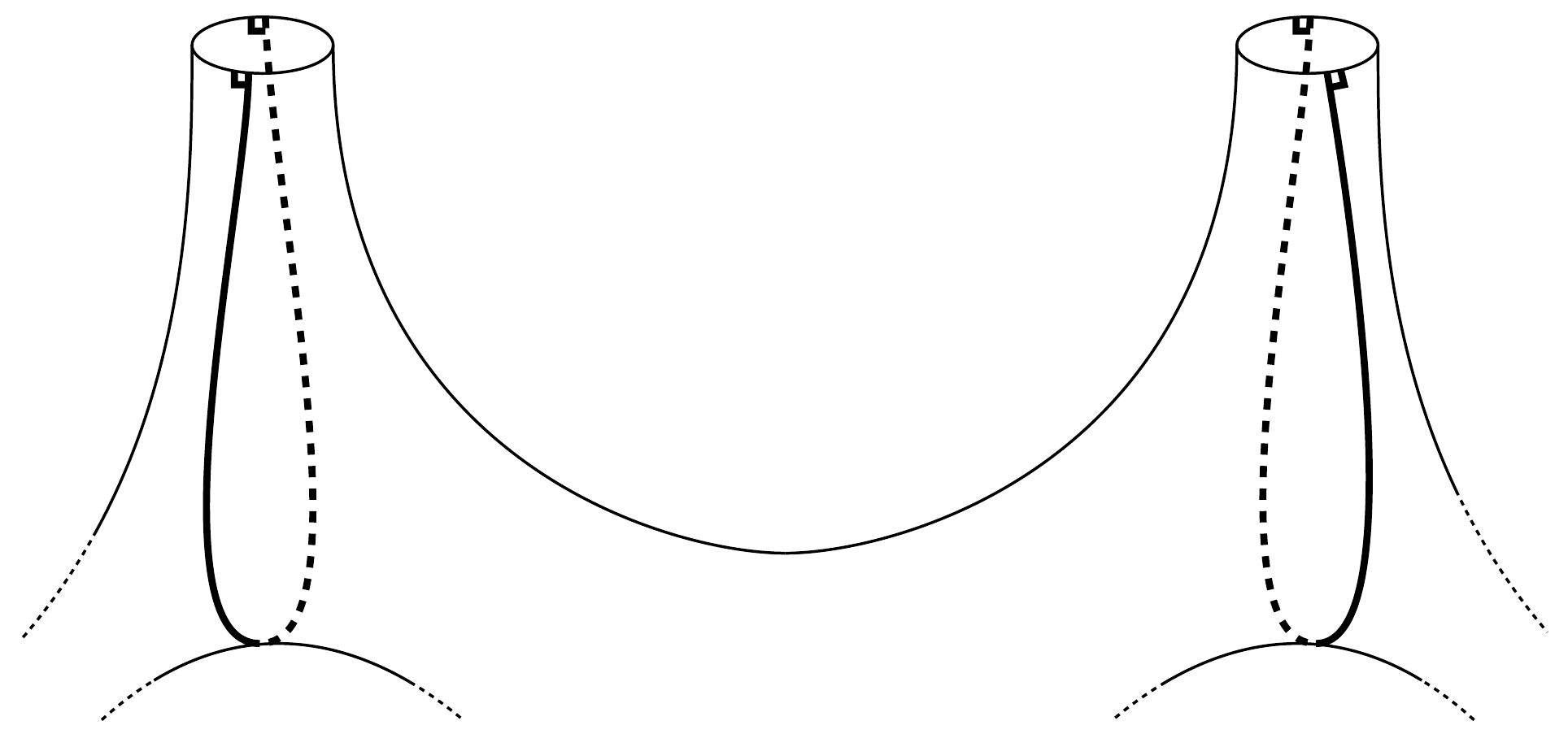,width=5.5cm,angle=0}}}
\vspace{-24pt}
\end{center}
}
\caption{When $\alpha'$ is of type (II)} \label{fig:typeIIarcs}
\end{figure}

Note $Y_\alpha$, the arc $a_1$ lies in an embedded (geodesic) pair of pants $P_1$ with $\alpha_1$ as one of its boundary curves. Similarly, $a_2$ and $\alpha_2$ lies in an embedded (geodesic) pair of pants $P_2$ with $\alpha_2$ as one of its boundary curves. Using curve and chain systems, we can prove the following statement about the geometry of the pants $P_1$ and $P_2$.

\begin{lemma}\label{lem:nextgeom}
Let $\delta$ be a boundary curve of $P_1$ or $P_2$. Then
$$
\ell(\delta) < 6 \log(4g) + \arcsinh(1) + 2 \sqrt{2} \log(1+\sqrt{2}) < 6 \log(8 g)
$$
\end{lemma}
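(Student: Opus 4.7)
The strategy naturally splits in two: a local pair-of-pants estimate reducing the problem to bounding the self-orthogeodesic $a_1$, and a minimality argument controlling that self-orthogeodesic via the curve and chain system.

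By symmetry it suffices to consider $\delta$ a boundary of $P_1$. Cutting $P_1$ along $a_1$ decomposes it into two annuli, each of which has one of the two boundaries $\delta_1, \delta_2$ as one side and a piecewise-geodesic loop made from $a_1$ together with one of the two subarcs of $\alpha_1$ cut off by the endpoints of $a_1$ as the other side. Thus each boundary $\delta$ of $P_1$ is freely homotopic to such a loop, giving $\ell(\delta) \le \ell(a_1) + \ell(\alpha)$. Since the relevant subarc of $\alpha_1$ might have length close to $\ell(\alpha) \le 2\arcsinh(1)$, one uses the geometry of the collar $\CC(\alpha)$: by Corollary~\ref{cor:shortboundary}, the outer boundary of $\CC(\alpha)$ has length at most $2\sqrt{2}\log(1+\sqrt{2})$, and its width is at least $\arcsinh(1)$. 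Using a detour through this collar, the contribution of the subarc of $\alpha$ can be replaced by a path contributing at most $\arcsinh(1) + 2\sqrt{2}\log(1+\sqrt{2})$, yielding
\[
\ell(\delta) \le \ell(a_1) + \arcsinh(1) + 2\sqrt{2}\log(1+\sqrt{2}).
\]

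To bound $\ell(a_1)$, I exploit the minimality of $\alpha'$. The orthogeodesics $a_1$ and $a_2$ are the representatives of the two arcs of $\alpha' \cap Y_\alpha$, so $\ell(a_1) + \ell(a_2) \le \ell(\alpha')$, and since $\alpha'$ minimizes length among closed geodesics not contained in $Y_\alpha$, any closed geodesic $\eta$ on $Y$ with $i(\eta, \alpha) \geq 1$ satisfies $\ell(\alpha') \le \ell(\eta)$. I construct such an $\eta$ using the curve and chain system $\Gamma, \Gamma_\A$ of Theorem~\ref{thm:cclength}. Because $\alpha \in \Gamma_0 \subset \Gamma$, the hexagonal decomposition of $X \setminus \Gamma$ by $\A$ attaches arcs to $\alpha_1$ and to $\alpha_2$. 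Following the case analysis of Lemma~\ref{lem:trans}, one concatenates at most three arcs of $\A$, each of length at most $2\log(4g)$, possibly through sub-arcs of intermediate curves of $\Gamma$, so as to produce a closed curve on $X$ crossing $\alpha$ transversely of total length at most $6\log(4g)$. Taking the geodesic representative yields $\ell(a_1) \le \ell(\alpha') \le 6\log(4g)$.

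Adding the two estimates gives the first inequality of the lemma. The second inequality is an elementary numerical verification: $\arcsinh(1) + 2\sqrt{2}\log(1+\sqrt{2}) = (1 + 2\sqrt{2})\log(1+\sqrt{2}) \approx 3.37$ while $6\log 2 \approx 4.16$, so $6\log(4g) + \arcsinh(1) + 2\sqrt{2}\log(1+\sqrt{2}) < 6\log(4g) + 6\log 2 = 6\log(8g)$. The main obstacle in executing this plan is the construction of the comparison curve $\eta$: when $\alpha$ is separating no arc of $\A$ can directly connect $\alpha_1$ to $\alpha_2$, so one must concatenate arcs on both sides of $\alpha$, and verifying that the resulting closed curve genuinely crosses $\alpha$ (rather than being homotopic to a curve in $Y_\alpha$) requires care in tracking how arc endpoints identify under the re-gluing of $\alpha_1$ with $\alpha_2$.
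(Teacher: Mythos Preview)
Your argument has a genuine gap in the second step: the bound $\ell(\alpha') \le 6\log(4g)$ is false in general. Since $\alpha \in \Gamma_0$, its collar $\CC(\alpha)$ has width $w(\alpha) = \arcsinh\bigl(1/\sinh(\ell(\alpha)/2)\bigr)$, which tends to infinity as $\ell(\alpha)\to 0$. Any closed geodesic $\eta$ with $i(\eta,\alpha)\ge 1$ must traverse this collar at least twice, so $\ell(\eta) \ge 2w(\alpha)$; in particular $\ell(\alpha')$ itself is unbounded as $\alpha$ becomes short. No concatenation of arcs from $\A$ can avoid this: the length bound $\ell(a)\le 2\log(4g)$ established in the proof of Theorem~\ref{thm:cclength} holds for arcs on $X' = X_0\setminus\Gamma$, i.e.\ outside the collars of $\Gamma_0$, so those arcs end on the collar boundary rather than on $\alpha$ itself. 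To close up into a curve actually crossing $\alpha$ you must add at least $2w(\alpha)$, destroying the bound. This is precisely why Lemma~\ref{lem:trans} is stated only for $\gamma\in\Gamma\setminus\Gamma_0$, and why Section~\ref{sec:thin} exists at all.

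The paper's proof circumvents this by never bounding $\ell(\alpha')$. Instead it compares $a_i$ to the \emph{shortest} orthogeodesic $b_i$ on $Y_\alpha$ with both endpoints on $\alpha_i$: if $\ell(a_i) > \ell(b_i) + \tfrac{1}{2}\ell(\alpha)$, one can replace the arc $a_i$ inside $\alpha'$ by $b_i$ together with a subarc of $\alpha$ and obtain a strictly shorter curve still essentially crossing $\alpha$, contradicting the minimality of $\alpha'$. The collar width cancels in this replacement (both $a_i$ and $b_i$ cross the same half-collar), so only the difference $\ell(a_i)-\ell(b_i)$ matters. One then bounds $\ell(b_i)\le 6\log(4g)$ by the curve-and-chain construction, yielding $\ell(a_i) < 6\log(4g) + \tfrac{1}{2}\ell(\alpha)$. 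Your first step and the final numerical check are fine; the missing idea is this local replacement argument in place of the global comparison curve.
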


\begin{proof}
The lengths of the boundary curves of $P_1$ and $P_2$ depend on the lengths of $\alpha$ and the arcs $a_1$ and $a_2$. 

The arcs $a_i$ may not be the shortest orthogeodesics with endpoints on $\alpha_i$, but they can't be off by much. In fact, if they are off by more than $\frac{1}{2} \ell(\alpha)$, we can construct a curve that essential intersects $\alpha$ of length less than $\alpha'$. 

Let $a'_1$ and $a'_2$ the restrictions of $a_1$ and $a_2$ to $Y'_\alpha$. Note they are still orthogeodesics, even though they lie between non geodesic boundary. Further note that the restriction to $Y'_\alpha$ of any orthogeodesic of $Y_\alpha$ with both endpoints on $\alpha_1$ or $\alpha_2$ has its length reduced by exactly $2 w(\alpha)$ where $w(\alpha)$ is the width of $\CC(\alpha)$. This means that $a'_1$, resp. $a'_2$, is not more than $\frac{1}{2} \ell(\alpha)$ longer than the shortest orthogeodesics with endpoints both on $\alpha_1$, resp. $\alpha_2$.

In the proof of Theorem \ref{thm:cclength}, we used short orthogeodesics to find short curve and chain systems. Putting $\alpha_1$ and $\alpha_2$ in that context, we found orthogeodesics of length at most $2\log(4g)$ attached to them. If these orthogeodesics don't have both endpoints on $\alpha_1$, resp. $\alpha_2$, their second endpoint is on a curve of length at most $2 \log(4g)$. Using a concatenation of paths allows one to find an orthogeodesic of length at most $6 \log(4g)$ with both endpoints on $\alpha_1$, resp. $\alpha_2$. 

Thus for both $i=1,2$ we have $\ell(a_i) < 6 \log(4g) + \frac{1}{2} \ell(\alpha)$.

Now using these orthogeodesics, we can bound the lengths of the boundary curves of $P_1$ and $P_2$. For $i=1,2$, the boundary curves of $P_i$ are of length at most
$$
\ell(\alpha_i) + \ell(a_i)
$$
and as $\ell(\alpha) \leq 2 \arcsinh (1)$ and $\ell(\alpha_i) \leq 2 \sqrt{2} \log(1+\sqrt{2})$ by Corollary \ref{cor:shortboundary}, this proves the lemma. 
 \end{proof}

In order to use this bound on the length of boundary curves of the pants, we have the following lemma. The proof is based on a McShane type identity. It is of a somewhat different nature from the rest of the article, and possibly of independent interest, so we delay its proof to Appendix \ref{app:A}.

\begin{lemma}\label{lem:mcshanecount}
Let $Y$ be a surface with non-empty geodesic boundary and let $\beta$ be one of the boundary curves. Let $\mathcal{P}$ be the set of all embedded geodesic pairs of pants that have $\beta$ as a boundary curve and $\mathcal{P}_L$ the subset of $\mathcal{P}$ with the other two boundary curves of length at most $L$. Then the cardinality of $\mathcal{P}_L$ is at most $e^{L}$.
\end{lemma}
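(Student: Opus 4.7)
The plan is to apply Mirzakhani's generalization of McShane's identity along the boundary component $\beta$ of $Y$ and to convert a uniform lower bound on each individual summand into an upper bound on the count $|\mathcal{P}_L|$.

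Setting $x:=\ell(\beta)$, Mirzakhani's identity expresses $x$ as a positive sum
$$
x \;=\; \sum_{P} F_P
$$
indexed by the embedded geodesic pairs of pants $P\subset Y$ having $\beta$ as one of their three boundary curves. The summand $F_P$ is an explicit positive function of $x$ and of the two remaining boundary lengths of $P$. When both remaining boundaries are interior simple closed geodesics of $Y$, of lengths $y$ and $z$, the summand has the form
$$
\mathcal{D}(x,y,z) \;=\; 2\log\!\left(1 + \frac{2\sinh(x/2)}{e^{-x/2}+e^{(y+z)/2}}\right);
$$
when one of the remaining boundaries coincides with a second boundary component of $Y$, the summand is a comparable function $\mathcal{R}$ with a similar closed form. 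Since every pants in $\mathcal{P}$, and in particular every pants in $\mathcal{P}_L$, appears in this identity and all summands are positive, one automatically has
$$
\sum_{P\in\mathcal{P}_L} F_P \;\leq\; x.
$$

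The next step is to establish, for every $P\in\mathcal{P}_L$, the pointwise inequality $F_P \geq x e^{-L}$. For the $\mathcal{D}$-summands this reduces to a one-variable estimate: the function $\mathcal{D}(x,y,z)$ is monotonically decreasing in $y+z$, so its minimum over $y,z\leq L$ is attained at $y=z=L$, and then combining the elementary bounds $\log(1+t)\geq 2t/(2+t)$ and $\sinh(x/2)\geq x/2$ (both valid for all relevant $x\geq 0$, $t\geq 0$) produces the desired lower bound. An analogous but marginally more involved computation handles the $\mathcal{R}$-summands. Summing these uniform lower bounds over $P\in\mathcal{P}_L$ and dividing by $x$ then yields
$$
|\mathcal{P}_L| \cdot x e^{-L} \;\leq\; \sum_{P\in\mathcal{P}_L} F_P \;\leq\; x, \qquad\text{so}\qquad |\mathcal{P}_L| \leq e^L.
$$

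The main obstacle is the pointwise lower bound on the Mirzakhani summands, which has to be controlled uniformly in $x>0$ and $L>0$; while the underlying hyperbolic expressions are entirely explicit, the small-parameter regimes are slightly delicate, and the $\mathcal{R}$-contribution---corresponding to pants that share two of their boundary curves with $\partial Y$---requires a separate but parallel trigonometric estimate.
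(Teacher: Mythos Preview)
Your overall strategy is exactly the paper's: apply the Mirzakhani--McShane identity along $\beta$, bound every summand coming from $\mathcal{P}_L$ from below by $x e^{-L}$ (equivalently, the normalized gap by $e^{-L}$), and divide. The paper packages this by normalizing the identity to sum to $1$, writing $\mu=F_P/x$, and proving $\mu(x,y,z)>e^{-(y+z)/2}$; for the boundary-meeting pants it shows the comparison $\eta>\mu$ by a clean geometric argument (the $\eta$-gap literally contains the two $\mu$-gaps on $\beta$), so only the $\mu$-bound needs an analytic proof.

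The gap in your sketch is the claimed lower bound on $\mathcal{D}$. The two elementary inequalities you cite, $\log(1+t)\geq \dfrac{2t}{2+t}$ and $\sinh(x/2)\geq x/2$, do \emph{not} deliver $\mathcal{D}(x,L,L)\geq x e^{-L}$ uniformly in $x$. Plugging them in gives
\[
\mathcal{D}(x,L,L)\;\geq\;\frac{4\sinh(x/2)}{\cosh(x/2)+e^{L}}\;\geq\;\frac{2x}{\cosh(x/2)+e^{L}},
\]
and asking this to dominate $xe^{-L}$ forces $e^{L}\geq \cosh(x/2)$, which fails once $x=\ell(\beta)$ is large relative to $L$. (The true inequality $\mathcal{D}(x,y,z)\geq x e^{-(y+z)/2}$ \emph{is} valid for all $x>0$, but it needs a sharper argument.) The paper handles this by rewriting the desired bound as $F(x)>0$ for
\[
F(x)=e^{x/2}+A-Ae^{x/(2A)}-e^{(x/2)(1/A-1)},\qquad A=e^{(y+z)/2}\geq 1,
\]
and observing $F(0)=0$, $F'(x)>0$; this avoids any loss in the large-$x$ regime. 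You should replace your two crude bounds by an argument of this type, and for the $\mathcal{R}$-summands it is simpler to use the geometric comparison $\mathcal{R}\geq \mathcal{D}$ rather than a separate trigonometric estimate.
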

Note that the bound on the cardinality does not depend on either the topology or the length of $\beta$. 

We can now use the previous lemmas to prove the following. 

\begin{proposition}\label{prop:typeII}
There are at most $ 2 (8g)^{12}$ type (II) next shortest curves, hence at most $
4 (8g)^{12}$
isometry classes of $Y$ when $\alpha'$ is of type (II).
\end{proposition}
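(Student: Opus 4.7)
The plan is to follow the Type (I) argument but replace Przytycki's arc bound with the McShane-style pants count of Lemma \ref{lem:mcshanecount}. The payoff is that each of the two arcs $a_1,a_2$ comprising the projection of $\alpha'$ to $Y_\alpha$ sits in a canonical embedded pair of pants $P_i$ with $\alpha_i$ as a boundary curve, and specifying $P_i$ determines $a_i$ (within $P_i$ there is a unique simple orthogeodesic with both endpoints on the fixed boundary $\alpha_i$, namely the one separating the other two boundary components). So the problem reduces to counting possible $P_1$ and $P_2$, each viewed as a pair of pants containing a distinguished boundary curve ($\alpha_1$ or $\alpha_2$).

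First I would invoke Lemma \ref{lem:nextgeom} to see that for either $i$, the two non-$\alpha_i$ boundary curves of $P_i$ have length less than $L:=6\log(8g)$. Then Lemma \ref{lem:mcshanecount}, applied with $\beta=\alpha_i$, bounds the number of candidate pants $P_i$ by $e^L=(8g)^6$. Multiplying the independent choices for $i=1$ and $i=2$ gives at most $(8g)^{12}$ possible unordered pairs $(a_1,a_2)$ of projections, and hence (recalling the caveat in the discussion preceding the proposition that, given $a_1$ and $a_2$, there are at most two possible homotopy classes of $\alpha'$ depending on the cyclic arrangement of the endpoints on $\alpha$) at most
\[
2\cdot(8g)^{12}
\]
homotopy classes of Type~(II) next-shortest curves $\alpha'$.

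To pass from homotopy classes to isometry types of $Y$, for each such homotopy class I would insert the smallest length in $\Lambda(Y)\setminus\Lambda(Y_\alpha)$ as the length of $\alpha'$ — exactly as in the proof of Proposition \ref{prop:typeI}. By convexity of geodesic length functions along the twist parameter along $\alpha$ (the only remaining degree of freedom to assemble $Y$ from $Y_\alpha$), each homotopy class with a prescribed length of $\alpha'$ is realised by at most two values of the twist, giving at most $2$ isometry classes of $Y$ per homotopy class. Combining yields at most $4(8g)^{12}$ isometry classes of $Y$ when $\alpha'$ is of Type~(II), as claimed.

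The main obstacle, and the reason the argument has to differ from Type (I), is that we cannot use a Przytycki-style bound: there is no a priori intersection-number control between the various candidate arcs $a_i$ arising from different twists (they can intersect many times), since they are not minimising between the same pair of endpoints. Lemma~\ref{lem:mcshanecount} is precisely what circumvents this — it replaces an arc-counting estimate by a pants-counting estimate that is insensitive to both the topology of $Y_\alpha$ and the length of $\alpha_i$. The one slightly delicate point to verify carefully is that the pants $P_i$ is genuinely embedded (so that Lemma~\ref{lem:mcshanecount} applies), which follows from $\alpha,\alpha'$ filling an embedded four-holed sphere as established in Lemma~\ref{lem:nexttop}.
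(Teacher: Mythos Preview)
Your proposal is correct and follows essentially the same route as the paper: bound the boundary lengths of $P_1,P_2$ by $L=6\log(8g)$ via Lemma~\ref{lem:nextgeom}, apply Lemma~\ref{lem:mcshanecount} to get at most $e^L=(8g)^6$ choices for each $P_i$, multiply, then append the factors of $2$ for the ambiguity in $\alpha'$ given $(a_1,a_2)$ and for convexity along the twist. Your additional remarks (why Przytycki's bound is unavailable here, and the embeddedness check for $P_i$) are sound elaborations that the paper leaves implicit.
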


\begin{proof}
By Lemma \ref{lem:nextgeom}, the arcs $a_1$ and $a_2$ are found in embedded pants with boundary curves of length at most $L= 6 \log(8g)$. Now by Lemma \ref{lem:mcshanecount}, there are at most $e^L$ such embedded pants. Thus there are at most $e^{2L}$ choices for $a_1$ and $a_2$ and hence at most $ 2e^{2L}$ possible isotopy classes for $\alpha'$. By the convexity of length functions, this means there are at most $4 e^{2L}$ isometry types for $Y$. 
\end{proof}

\subsection{From thick to thin}

We now put this all together prove the following result, the main goal of this section.
\begin{theorem}\label{thm:thin}
Let $\Lambda(Y)$ be a length spectrum for some $Y\in \M_g$. Let $X_0$ be fixed isometry class of a thick part of a surface in $\M_g$ with $k_0$ curves in $\Gamma_0$. Then there are at most 
$$
8^{k_0+1} g^{12k_0}
$$
possible isometry types of $X$ with $X_0 \subset X$ and $\Lambda(X)=\Lambda(Y)$.
\end{theorem}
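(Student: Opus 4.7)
The strategy is to reconstruct $X$ from $X_0$ by regluing the curves of $\Gamma_0 = \Gamma_0(X)$ one at a time, applying Lemma~\ref{lem:nexttop} together with Propositions~\ref{prop:typeI} and~\ref{prop:typeII} at each step. Because $\Lambda(X) = \Lambda(Y)$ and every simple closed geodesic of length below $2\arcsinh(1)$ belongs to $\Gamma_0$ (while all other primitive closed geodesics have length at least $2\arcsinh(1)$), the multiset $\{\ell(\alpha) : \alpha \in \Gamma_0\}$ is read off from $\Lambda(Y) \cap (0, 2\arcsinh(1))$. In particular, the integer $k_0$ and the ordered lengths $\ell_1 \leq \cdots \leq \ell_{k_0}$ are intrinsic to the given data. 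I fix the corresponding ordering $\alpha_1, \ldots, \alpha_{k_0}$ of $\Gamma_0$ (using any tie-breaking convention that depends only on the data) and set
\[
X_i := X \setminus \{\alpha_{i+1}, \ldots, \alpha_{k_0}\}, \qquad i=0,1,\ldots,k_0,
\]
so that $X_0$ is the given thick part, $X_{k_0} = X$, and each $X_i$ is obtained from $X_{i-1}$ by regluing the single curve $\alpha_i$.

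\textbf{Per-step count.} At step $i$, I treat the isometry class of $X_{i-1}$ as fixed and count how many isometry classes of $X_i$ are compatible with $\Lambda(Y)$. In the setup of the preceding subsection we have $Y = X_i$, $Y_\alpha = X_{i-1}$, and $\alpha = \alpha_i$; the length $\ell(\alpha_i)$ is already prescribed by the ordering. Lemma~\ref{lem:nexttop} then classifies the next-shortest crossing curve $\alpha_i'$ of $X_i$ as either type (I) (filling a one-holed torus with $\alpha_i$) or type (II) (filling an embedded four-holed sphere with $\alpha_i$). Propositions~\ref{prop:typeI} and~\ref{prop:typeII} respectively yield at most $32(g-1)^2$ and $4(8g)^{12}$ isometry classes of $X_i$, so that the number of choices at step $i$ is bounded by
\[
32(g-1)^2 + 4(8g)^{12} \leq 8(8g)^{12}.
\]
Multiplying this bound over the $k_0$ gluing steps yields at most $\bigl(8(8g)^{12}\bigr)^{k_0}$ isometry classes of $X = X_{k_0}$ with $X_0 \subset X$ and $\Lambda(X) = \Lambda(Y)$, and tidying up the constants gives the stated form $8^{k_0+1}\, g^{12 k_0}$.

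\textbf{Main obstacle.} The principal technical point is that Propositions~\ref{prop:typeI} and~\ref{prop:typeII} are naturally phrased in terms of the full length spectrum of the surface $X_i$ being reconstructed, whereas only $\Lambda(X) = \Lambda(Y)$ is directly available. The key observation is that $\Lambda(X_i) \subseteq \Lambda(X)$, since every closed geodesic contained in the cut surface $X_i$ is also a closed geodesic of $X$; hence $\ell(\alpha_i')$ certainly occurs in $\Lambda(X)$ and can be sought among the elements of $\Lambda(X) \setminus \Lambda(X_{i-1})$. The freedom to test several candidate values for $\ell(\alpha_i')$ is already absorbed into the convexity factor (the final ``$\times 2$'') appearing in both propositions, so the per-step bound above remains valid and the inductive argument goes through unchanged.
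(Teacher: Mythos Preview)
Your argument mirrors the paper's: reglue the curves of $\Gamma_0$ one at a time, apply Propositions~\ref{prop:typeI} and~\ref{prop:typeII} at each step, and multiply over the $k_0$ steps.

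Two comments. First, the final ``tidying up'' does not actually produce the stated bound: $\bigl(8(8g)^{12}\bigr)^{k_0} = 8^{13k_0}\,g^{12k_0}$, which exceeds $8^{k_0+1}\,g^{12k_0}$ for every $k_0\geq 1$. The paper makes the same slip, and it is harmless for Theorem~\ref{thm:maincount} because the surplus factor $8^{12k_0}\leq 8^{36(g-1)}$ is absorbed into the constant~$A$ there. Second, your ``main obstacle'' paragraph correctly flags a subtlety the paper does not address explicitly, but the resolution you propose is not quite the right mechanism: the factor of~$2$ in the propositions arises from convexity for a \emph{single} prescribed target length, not from testing several candidate values of $\ell(\alpha_i')$. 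The count nonetheless goes through, since the bound on homotopy classes of $\alpha'$ in each proposition is obtained by ranging over \emph{all} twists and requires no spectral input, while the remaining datum $\ell(\alpha_i')$ is an invariant of the isometry class $X_i$ being enumerated rather than an additional free choice.
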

\begin{proof}
We proceed iteratively, curve by curve in $\Gamma_0$, adding one curve at a time and estimating the number of possible isometry types using Propositions \ref{prop:typeI} and \ref{prop:typeII}. We begin by knowing the isometry type of $X_0$, and then choose a curve $\alpha$ in $\Gamma_0$, and consider the surface $X_1 : = X \setminus \{ \Gamma_0 \setminus \alpha\}$. Depending on the type of $\alpha'$, we have bounds on the number of isometry classes for $X_1$. We then proceed to another curve in $\Gamma_0$ and so forth. Note that $X_{k_0}=X$. 

It is easy to check that our estimate for the number of isometry types for $\alpha'$ is of type (II) (Proposition \ref{prop:typeII}) exceeds the estimate when the curve is of type (I) (Proposition \ref{prop:typeI}). As at each step, $\alpha'$ is one or the other, a rough bound is thus at most twice the estimate of $8 \cdot (8g)^{12}$ on the isometry types proved in Proposition \ref{prop:typeII}.

Thus at the end of the process, we have at most $8^{k_0+1} g^{12k_0}$ possible isometry types for $X_{k_0}$ which proves the theorem.
\end{proof}

\section{Counting isometry types}\label{sec:iso}

We can now proceed to proving the full upper bound on the number of isospectral but non isometric surfaces in a given moduli space.

\begin{proof}[Proof of Theorem \ref{thm:maincount}]
Let $X$ be isospectral to $Y$. 

Let $\Gamma, \Gamma_\A$ be a short curve and chain system of $X$, the existence of which is guaranteed by Theorem \ref{thm:cclength}. As before, $\Gamma_1 := \Gamma \setminus \Gamma_0$ and $\Delta_{\Gamma_1}$ the set of curves transversal to those of $\gamma$ of length bounded in Lemma \ref{lem:trans}. Note $\Gamma, \Gamma_\A$ is one of $N_{cc}(g)$ different topological types. We'll take this into account at the end and for now concentrate on counting all possible isometry classes for $\Gamma, \Gamma_\A$ lying in a particular topological type. Recall the curves are marked.

By Lemma \ref{lem:countcurves}, there are at most $
(g-1) \, e^{L+6}$ different primitive closed geodesics of length less than $L$. 

Let $k$ be the number of curves of $\Gamma$. Note that $k\leq 3g-3$. In particular, by Theorem \ref{thm:cclength}, the $k$ lengths of the curves of $\Gamma$ are among a set of at most 
$$
(g-1) \, e^{2 \log(4g) +6} = (g-1)\, e^{\log(16e^6 g^2} = 16e^6 (g-1)g^2
$$
lengths. A (crude) upper is that there are at most $\left(16e^6 (g-1)g^2\right)^k$ choices for these curves.

Similarly, the lengths of the $6g-6$ curves of $\Gamma_\A$ are among a collection of
$$
(g-1) \, e^{8 \log(4g) +6} = 16e^6 (g-1)g^8
$$
lengths and thus there are at most $\left(16e^6 (g-1)g^8\right)^{6g-6}$ choices for these. 

These choices of lengths will determine the isometry type of $X\setminus \Gamma$ but we need more information to determine $X_0$. For this we'll use the curves $\Delta_{\Gamma_1}$. Using Lemma \ref{lem:trans}, we know there are at most
$$
(g-1) \, e^{14 \log(4g) +6} = 16e^6 (g-1)g^{14}
$$
choices for these curves. So if there are $k_1\leq k$ curves in $\Gamma_1$, this gives at most 
$$
\left(16e^6 (g-1)g^{14}\right)^{k_1}
$$
However, unlike before, the choice of length might not uniquely determine the isometry type. Fortunately, length is convex along twists. Thus for $\gamma\in \Gamma_1$ and $\delta_\gamma$  its transversal curve, the quantity $\ell(\delta_\gamma)$ will determine $\tau_\gamma$ up to two possibilities. When counting possible isometry type, we must thus multiply by an additional factor of $2^{k_1}$.

All in all, this shows that there are at most 
$$
N_{cc}(g) \cdot \left(16e^6 (g-1)g^2\right)^k \cdot \left(16e^6 (g-1)g^8\right)^{6g-6} \cdot \left(16e^6 (g-1)g^{14}\right)^{k_1}\cdot 2^{k_1}
$$
possible isometry types for $X_0$. 

We now use the results on thin surfaces to conclude. By Theorem \ref{thm:thin}, given $X_0$, there are at most 
$$
8^{k_0+1} g^{12k_0}
$$
possible isometry types for $X$ where $k_0$ is the cardinality of $\Gamma_0$. That gives us a total of 
\begin{equation}\label{eqn:bigcount}
N_{cc}(g) \cdot \left(16e^6 (g-1)g^2\right)^k \cdot \left(16e^6 (g-1)g^8\right)^{6g-6} \cdot \left(16e^6 (g-1)g^{14}\right)^{k_1}\cdot 2^{k_1} \cdot 8^{k_0+1} g^{12k_0}
\end{equation}
of possible isometry types for $X$, with the condition that $k_0+k_1= k \leq 3g-3$.

Unfortunately, we have to get into some messy estimations using our previous estimates for $N_{cc}(g)$. As much as possible, choices will be made in terms of their simplicity and not in terms of optimality. 

Using Lemma \ref{lem:countca} and some obvious simplifications, the quantity \ref{eqn:bigcount} becomes
$$
\frac{8}{e^6} \left( \frac{12^6}{e^5}\right)^{g-1} (g-1)^{6g-6} \left(256 e^6 (g-1)g^2\right)^k \cdot \left(16e^6 (g-1)g^8\right)^{6g-6} \cdot \left(32e^6 (g-1)g^{14}\right)^{k_1}\cdot g^{12k_0}
$$
which further simplifies to
$$
\frac{8}{e^6} \left( \frac{\left(12\cdot16\cdot e^6\right)^6}{e^5}\right)^{g-1}   \left(256 e^6 \right)^k \left(32e^6 \right)^{k_1}  \cdot \left(g-1\right)^{12g-12 +k +k_1} g^{48g-48 + 2k + 14k_1 + 12 k_0}
$$ 
Now using $k_1+ k_0\leq k \leq 3g-3$ and further simplications the above expression is bounded above by an expression of the form
$$
\frac{8}{e^6} A^{g-1} g^{B(g-1)}
$$
where $A = 3^6 2^{75} e^{67} \mbox{ and } B= 114$.

The main point is that the above bound is of the type $g^{Cg}$ for some $C$. Asymptotically it is certainly dominated by $g^{115 g}$ for large enough $g$, but for a cleaner expression, a small manipulation shows that for all $g\geq 2$ it is bounded above by $g^{154 g}$ which proves the result.
\end{proof}

\section{Length spectra interrogations}\label{sec:question}

In this final section we'll show that it suffices to ask an unknown spectrum a quantifiable finite number of questions to determine it uniquely. In some sense, most of the hard work has already been done. 

We recall the setup: we have an unknown length spectrum $\Lambda$ of a genus $g$ surface which we want to determine by asking questions. 

An {\it admissible question} is the following: what is the first value of $\Lambda \setminus \LL$ where $\LL$ is a finite list of values?

We want to determine the minimum number of questions we need to ask to determine $\Lambda$. 

The basic strategy is as follows. By the results on lengths of curve and chain systems, for some polynomial $p(g)$, it will suffice to know the first $p(g)$ lengths to determine all possible isometry types of the thick part of the underlying surface (the number of these being quantified). For each isometry type of thick surface, by asking an additional question for each short curve, we can determine all possible isometry types for the whole surface, again the number of these being quantified. Thus with a polynomial number of questions we can determine all possible isometry types, the number of these being at most $g^{Cg}$ for some $C$. We regroup isospectral isometry types and use additional questions to distinguish between them. 

We can now proceed to the proof of the main result. 

\begin{proof}[Proof of Theorem \ref{thm:mainquestion}]
To begin, we interrogate the beginning of the length spectrum $\Gamma$ to determine enough lengths to reconstruct the thick part of the surface up to computable finiteness. By the methods and proof of Theorem \ref{thm:maincount}, the lengths of curves of length at most $14 \log(4g)$ determine the thick part of a surface. Again by Lemma \ref{lem:trans}, these lengths are among the first
$$
(g-1) \, e^{14 \log(4g) +6} = 16e^6 (g-1)g^{14}
$$
lengths of $\Gamma$. Thus the first set of admissible questions is to ask for the first $16e^6 (g-1)g^{14}$ lengths. This gives us knowledge of the thick part of the surface up to computable finiteness, corresponding to all possible curve configurations.

Now to deal with the full surface, we proceed as in the proof of Theorem \ref{thm:thin}. Given a possible isometry type $X_0$ of the thick part of the surface, we look at $\Lambda \setminus \Lambda(X_0)$ and consider the first value in this set, say $\xi_1$. This can be determined via a single admissible question. Indeed, with the knowledge of $\Lambda(X_0)$, we can determine an upper bound on $\xi_1$, say $B_1$. Note that the position of $\xi_1$ in $\Lambda$ can be arbitrarily large if $\ell_1$ is arbitrarily small, but knowing $\ell_1 \in \Lambda(X_0)$ tells us where to look for $\xi_1$. The admissible question is: what is the first value of $\Lambda\setminus \{ x\in \Lambda(X_0) \mid x \leq B_1 \} ?$

With this is hand, we iterate this process as in Theorem \ref{thm:thin}: to determine the length of the $k$ curves traversal to those of $\Gamma_0$ requires $k$ admissible questions. So all in all, because $k\leq 3g-3$, we've asked at most $16e^6 (g-1)g^{14} +3g-3$ admissible questions. 

This finite set of questions has allowed us to determine all possible isometry classes of surfaces that might have $\Lambda$ as their length spectrum. Specifically, we have now reduced the problem to a collection of at most $I_g$ isometry classes where $I_g$ satisfies the bounds of Theorem \ref{thm:maincount}. Note this might seem somewhat counter-intuitive: the bounds from Theorem \ref{thm:maincount} are bounds on the number of isospectral but non-isometric surfaces whereas here we can't distinguish between isospectral surfaces. The key thing is that once we know an isometry class, we also know the length spectrum.

We can regroup these isometry classes by spectrum, as we won't be able to distinguish between non-isometric but isospectral surfaces by interrogating a length spectrum. Denote by $M$ this collection of possible spectra, each represented by an isometry class of surface with the appropriate length spectrum.

Now we need to ask additional admissible questions to figure out which one of these possible spectra $\Lambda$ really is. 

To do so, we take any two $X,Y$ with $\Lambda(X)\neq\Lambda(Y)$. There is a smallest integer $m_{X,Y}$ such that 
$$
\ell_{m_{X,Y}}(X) \neq \ell_{m_{X,Y}}(Y)
$$
We ask the following admissible question: what is the first value of 
$$
\Lambda\setminus \{ x\in \Lambda(X) \mid x < \ell_{m_{X,Y}}(X) \}
$$
If the answer is not $\ell_{m_{X,Y}}(X) $, then $\Lambda(X) \neq \Lambda$. If the answer is not $\ell_{m_{X,Y}}(Y) $ then $\Lambda(Y) \neq \Lambda$. So a single question rules out either $X$ or $Y$ (or possibly both). Once a spectrum is ruled out, we discard it. After at most $M-1$ questions, a single spectrum remains and we have determined $\Lambda$ with absolute certainty.

All in all we've asked
$$
M-1 + 16e^6 (g-1)g^{14} +3g-3
$$
possible questions so using the estimates from the proof of Theorem \ref{thm:maincount} the number of questions is bounded by 
$$
\frac{8}{e^6} A^{g-1} g^{B(g-1)} +16e^6 (g-1)g^{14} +3g-3
$$
where $A = 3^6 2^{75} e^{67}$ and$ B= 114$. As in the proof of Theorem  \ref{thm:maincount}, this estimate is asymptotically bounded above by $g^{115g}$ and via a small manipulation can be shown to be bounded, for all $g \geq 2$, by
$$g^{154g}$$
as claimed.
\end{proof}
\appendix
\section{Appendix: McShane identities}\label{app:A}

The main goal of the appendix is to prove Lemma \ref{lem:mcshanecount} using a version of the McShane identity. The identity we shall use is the following, due to Mirzakhani \cite{Mirzakhani} and independently discovered by Tan, Wong and Zhang who also proved a version for surfaces with cone points \cite{TanWongZhang}. 

\begin{theorem}\label{thm:mcshane}
Let $Y$ be a surface with non-empty geodesic boundary and let $\beta$ be one of the boundary curves. Let $\mathcal{P}$ be the set of all embedded geodesic pairs of pants that have $\beta$ as boundary curve. Let $\mathcal{P'}$ be the subset of $\mathcal{P}$ with two boundary curves of $Y$ as its boundary curves. 

Then there exist explicit positive functions $\mu, \eta$ that depend only the geometry of $P$ such that
$$
\sum_{P \in \mathcal{P} \setminus \mathcal{P'}} \mu(P) +  \sum_{P \in \mathcal{P'}} \eta(P) = 1
$$
\end{theorem}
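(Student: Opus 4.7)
The plan is to parametrize $\beta$ by arclength and study the dynamics of geodesic rays launched orthogonally from it into $Y$. For each $x \in \beta$ let $\gamma_x$ denote the maximal geodesic ray perpendicular to $\beta$ at $x$ entering $Y$. The goal is to partition $\beta$, up to a set of Lebesgue measure zero, into arcs indexed by embedded pairs of pants $P \in \mathcal{P}$; computing the length of each arc from the geometry of $P$ and dividing by $\ell(\beta)$ then gives the identity with explicit $\mu$ and $\eta$.

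First I would establish a trichotomy for $x \in \beta$: the ray $\gamma_x$ is either (a) simple and terminates on another boundary component of $Y$ in finite time, (b) simple and accumulates onto a simple closed geodesic of $Y$, or (c) ``bad'' in the sense that it self-intersects or its bi-infinite completion is non-simple. The decisive measure-theoretic input is the Birman--Series theorem: the set of points of $Y$ lying on some complete simple geodesic has Hausdorff dimension one, so intersecting this set with $\beta$ shows that the subset of bad $x$ has zero one-dimensional Lebesgue measure.

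Next, for each good $x$ (types (a) and (b)) I would attach a canonical embedded pair of pants $P_x \in \mathcal{P}$: take a regular neighborhood of $\beta \cup \gamma_x$ together with either the boundary component of $Y$ on which $\gamma_x$ terminates (case (a)) or the simple closed geodesic onto which $\gamma_x$ spirals (case (b)), and replace the result by its geodesic representative. The key geometric observation is that $x \mapsto P_x$ is locally constant on the good set, so the preimage of each $P$ is an open union of arcs $I_P \subset \beta$, with $I_P$ nonempty only when $P \in \mathcal{P}$. Working inside $P$ using its standard decomposition into two right-angled hexagons, hyperbolic trigonometry yields a closed form for the total length $|I_P|$ in terms of the three boundary lengths of $P$; after dividing by $\ell(\beta)$ this defines $\mu(P)$ when $P \in \mathcal{P}\setminus\mathcal{P}'$ (two interior boundaries) and $\eta(P)$ when $P \in \mathcal{P}'$ (two boundaries of $Y$). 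Summing over $P$ and using that the good set has full measure gives the identity.

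The main obstacle is the measure-zero step: one must rule out rays whose behavior is neither finitely-terminating nor recurrent-with-simple-limit. A naive cardinality argument is insufficient and the Birman--Series result is essential; extra care is required for rays that are simple initially but whose bi-infinite completion fails to be simple, since these must also be swept into the bad set. Once this is done, the remaining work is bookkeeping in pants trigonometry: the explicit forms of $\mu$ and $\eta$ come from measuring the orthogonal ``window'' on $\beta$ through which rays can reach the opposite seam of $P$ before exiting, a direct computation in a right-angled hexagon.
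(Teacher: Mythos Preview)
The paper does not actually prove this theorem: it is quoted as a result of Mirzakhani and of Tan--Wong--Zhang, and only the geometric mechanism is sketched later inside the proof of Proposition~\ref{prop:mcshane}. So there is no ``paper's own proof'' to compare against, only that sketch. Your overall plan---launch orthogonal rays from $\beta$, decompose $\beta$ into subintervals indexed by pants, compute the gap lengths by hexagon trigonometry---is indeed the standard one and matches the paper's description.

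However, your trichotomy is inverted, and this is a genuine gap. The full-measure behaviour for $x\in\beta$ is that $\gamma_x$ either \emph{self-intersects} or \emph{hits a boundary component}; the measure-zero exceptional set consists of those $x$ for which $\gamma_x$ stays simple forever without reaching $\partial Y$ (these are precisely the rays spiralling onto a simple closed geodesic or a lamination, and they form the \emph{endpoints} of the gaps, a countable set). You have placed the self-intersecting rays in your ``bad'' class (c) and then invoked Birman--Series to declare them measure zero, but Birman--Series says the opposite: the union of complete simple geodesics has Hausdorff dimension one, so it is the \emph{simple} rays (your (b), and the non-terminating part of what should be the exceptional set) that are sparse, not the self-intersecting ones. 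With your labelling, types (a) and (b) together do not cover a set of full measure---type (b) is countable---so the sum of your $|I_P|$ would not equal $\ell(\beta)$.

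The fix is exactly what the paper's sketch says: for a self-intersecting ray, follow $\gamma_x$ until its first self-intersection and then return along $\gamma_x$ to $\beta$, obtaining a simple arc from $\beta$ to itself; the homotopy class of this arc (equivalently, the embedded pair of pants it determines together with $\beta$) is the label of $x$. The gaps for pants in $\mathcal P\setminus\mathcal P'$ are filled by these self-intersecting rays, while the gaps for $\mathcal P'$ are filled by rays hitting another boundary component. Once the trichotomy is corrected, the rest of your outline (local constancy of $x\mapsto P_x$, hexagon trigonometry for $|I_P|$) goes through.
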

The functions $\mu$ and $\eta$, often called gap functions, are functions of the boundary lengths of $P$. They are real functions in three variables. By convention we set the first variable of $\mu$ to be the length of $\beta$. For $\eta$, the first variable is the length of $\beta$ and the second variable is the length of the second boundary curve of the corresponding pair of pants. 

The following proposition contains all the features about these functions $\mu$ and $\eta$ that we will need.

\begin{proposition}\label{prop:mcshane}
The gaps functions $\mu$ and $\eta$ enjoy the following properties:
\begin{enumerate}[(i)]
\item $\mu(x,y,z) < \eta(x,y,z)$
\item $\mu(x, y,z) > \frac{1}{e^{\frac{y+z}{2}}}$
\end{enumerate}
\end{proposition}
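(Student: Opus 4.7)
The plan is to invoke the explicit formulas for the Mirzakhani--Tan--Wong--Zhang gap functions and reduce each inequality to a one-variable calculus computation. With the normalization fixed in the excerpt, $\mu(x,y,z)=\mathcal{D}(x,y,z)/x$ and $\eta(x,y,z)=\mathcal{R}(x,y,z)/x$, where
$$\mathcal{D}(x,y,z)=2\log\frac{e^{x/2}+e^{(y+z)/2}}{e^{-x/2}+e^{(y+z)/2}}, \qquad \mathcal{R}(x,y,z)=x-\log\frac{\cosh(y/2)+\cosh((x+z)/2)}{\cosh(y/2)+\cosh((x-z)/2)}.$$

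For (i), the key step is a doubling identity
$$2\mathcal{R}(x,y,z)=\mathcal{D}(x,y,z)+\mathcal{D}(x,-y,z).$$
To prove it, combine the two logarithms on the right, then expand the numerator
$$(e^{x/2}+e^{(y+z)/2})(e^{x/2}+e^{(z-y)/2})=e^x+e^z+2e^{(x+z)/2}\cosh(y/2)=2e^{(x+z)/2}\bigl[\cosh((x-z)/2)+\cosh(y/2)\bigr],$$
and similarly the denominator, which equals $2e^{(z-x)/2}\bigl[\cosh((x+z)/2)+\cosh(y/2)\bigr]$. The ratio equals $e^x$ times the reciprocal of the fraction appearing in $\mathcal{R}$, so taking $2\log$ yields exactly $2\mathcal{R}(x,y,z)$. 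Once this identity is in hand, note that $\mathcal{D}(x,y,z)$ depends on $y,z$ only through $t=(y+z)/2$, and
$$\frac{\partial \mathcal{D}}{\partial t}=2e^t\left(\frac{1}{e^{x/2}+e^t}-\frac{1}{e^{-x/2}+e^t}\right)<0$$
for $x>0$. Hence $\mathcal{D}(x,-y,z)>\mathcal{D}(x,y,z)$ whenever $y>0$, and the doubling identity gives $\eta(x,y,z)>\mu(x,y,z)$.

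For (ii), set $s=e^{-(y+z)/2}\in(0,1)$. The inequality $\mu>s$ is equivalent to $h(x)>0$ where
$$h(x):=\log(se^{x/2}+1)-\log(se^{-x/2}+1)-\frac{xs}{2}.$$
Since $h(0)=0$, it suffices to prove $h'(x)>0$ for $x>0$. A direct differentiation, followed by combining fractions over the common denominator $(se^{x/2}+1)(se^{-x/2}+1)=1+2s\cosh(x/2)+s^2$, and algebraic simplification, produces the factored form
$$h'(x)=\frac{s(1-s)\bigl[2\cosh(x/2)-(1-s)\bigr]}{2(se^{x/2}+1)(se^{-x/2}+1)},$$
which is strictly positive because $0<s<1$ and $2\cosh(x/2)\geq 2>1>1-s$.

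The main obstacle is the doubling identity in (i): it is purely algebraic but requires two careful applications of $e^a+e^b=2e^{(a+b)/2}\cosh((a-b)/2)$ to the numerator and denominator produced by combining the two logarithms. Once the identity is checked, both properties reduce to inspecting the sign of a single explicit expression.
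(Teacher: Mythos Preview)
Your argument is correct. For (ii) it is essentially the paper's approach: both reduce to showing that a one–variable function of $x$ vanishes at $0$ and has positive derivative; the paper exponentiates first and works with $F(x)=e^{x/2}+A-Ae^{x/(2A)}-e^{\frac{x}{2}(\frac{1}{A}-1)}$ (with $A=e^{(y+z)/2}$), while you stay in logarithmic form and obtain the cleaner factored derivative, but the idea is the same.

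For (i) the approaches genuinely differ. The paper gives a geometric proof: it recalls that the gap associated with a pair of pants in $\mathcal P'$ is an interval on $\beta$ that \emph{contains} the two intervals that would arise if the second boundary cuff were an interior curve, so $x\,\eta(x,y,z)>x\,\mu(x,y,z)$ without any computation. Your route is purely algebraic: you establish the doubling identity $2\mathcal R(x,y,z)=\mathcal D(x,y,z)+\mathcal D(x,-y,z)$ and then use that $\mathcal D$ is strictly decreasing in $(y+z)/2$. This is a nice self–contained computation that does not appeal to the geometric origin of the gaps; the paper's argument, on the other hand, explains \emph{why} the inequality should hold and would persist under variants of the identity. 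One small caveat: your displayed formula for $\mathcal R$ has the roles of $y$ and $z$ interchanged relative to the paper's explicit expression for $\eta$ (where the second variable is the length of the second boundary cuff). This is harmless here because $\mu$ is symmetric in $y$ and $z$ and the inequality is asserted for all positive $x,y,z$, but it is worth aligning the convention with the paper's.
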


\begin{proof}
By \cite{Mirzakhani} and \cite{TanWongZhang}, the explicit formulae for the functions $\mu$ and $\eta$ are the following:
$$
\mu(x,y,z)= \frac{4}{x} \arctanh\left(\frac{\sinh\left(\frac{x}{2}\right)}{\cosh\left(\frac{x}{2}\right)+e^{\frac{y+z}{2}}}\right)
$$
and
$$
\eta(x,y,z)=1 - \frac{2}{x} \arctanh\left(\frac{\sinh\left(\frac{x}{2}\right)\sinh\left(\frac{y}{2}\right)}{\cosh\left(\frac{z}{2}\right)+\cosh\left(\frac{x}{2}\right)\cosh\left(\frac{y}{2}\right)}\right)
$$
Both statements of the proposition can be shown by function manipulation, but we'll give a geometric reason for why (i) is true. The proof of (ii) however will be a straightforward function manipulation.
 
 We begin with (i) which, as we shall see, comes from analyzing where the functions in these identities come from.

These identities come from breaking up $\beta$ into intervals as follows. For each point of $\beta$, consider the unique geodesic segment that leaves $\beta$ from this point at a right angle and exponentiate until the corresponding geodesic either hits itself or hits a boundary geodesic. (The points that don't do either are measure $0$.) If it hits itself, return to $\beta$ by following the geodesic back to obtain a simple arc. Then we regroup the basepoints into segments according to the homotopy type of the associated simple arc. The functions (called gap functions) come from the measures of the segments divided by $\ell(\beta)$ for normalization. Each homotopy class of arc determines an embedded pair of pants.

The functions $\mu$ and $\eta$ are computed in the corresponding pair of pants. The intervals of $\beta$ (before normalization) are different depending on whether the pair of pants contains one or two boundary curves (these are the cases  $\mathcal{P} \setminus \mathcal{P'}$ and $ \mathcal{P'}$ above). The boundary points of the intervals correspond to simple geodesics that spiral indefinitely around the boundary curves of the pair of pants. For $\mathcal{P} \setminus \mathcal{P'}$, these spiraling geodesics are illustrated in \cite[Figure 1, p. 91]{TanWongZhang}. We reproduce a similar figure for convenience (Figure \ref{fig:PminusP'}).

\begin{figure}[h]
{\color{linkred}
\leavevmode \SetLabels
\L(.578*-.06) $\beta$\\%
\endSetLabels
\begin{center}
\AffixLabels{\centerline{\epsfig{file =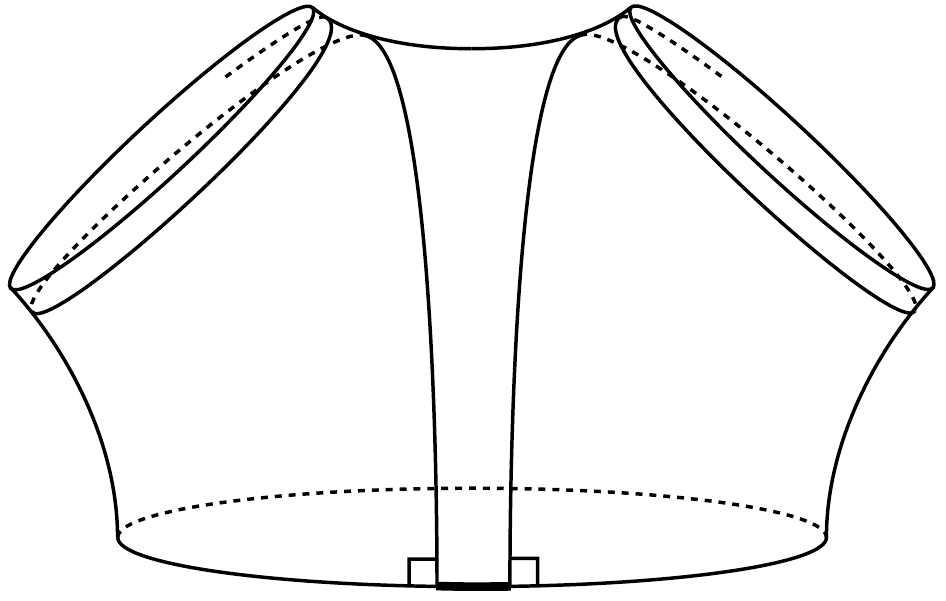,width=5.0cm,angle=0}}}
\vspace{-24pt}
\end{center}
}
\caption{The "front" interval on $\beta$ corresponding to this pair of pants is in bold} \label{fig:PminusP'}
\end{figure}

Note there are two of them, one for each orientation of the corresponding simple arc. A pair of pants in $\mathcal{P'}$ has a second boundary cuff $\beta'$. There is a single interval this time, but it encompasses both intervals that would have appeared had the corresponding pair of pants been in the set $\mathcal{P} \setminus \mathcal{P'}$. This is illustrated in Figure \ref{fig:P'}.

\begin{figure}[h]
{\color{linkred}
\leavevmode \SetLabels
\L(.578*-.06) $\beta$\\%
\L(.338*.645) $\beta'$\\%
\endSetLabels
\begin{center}
\AffixLabels{\centerline{\epsfig{file =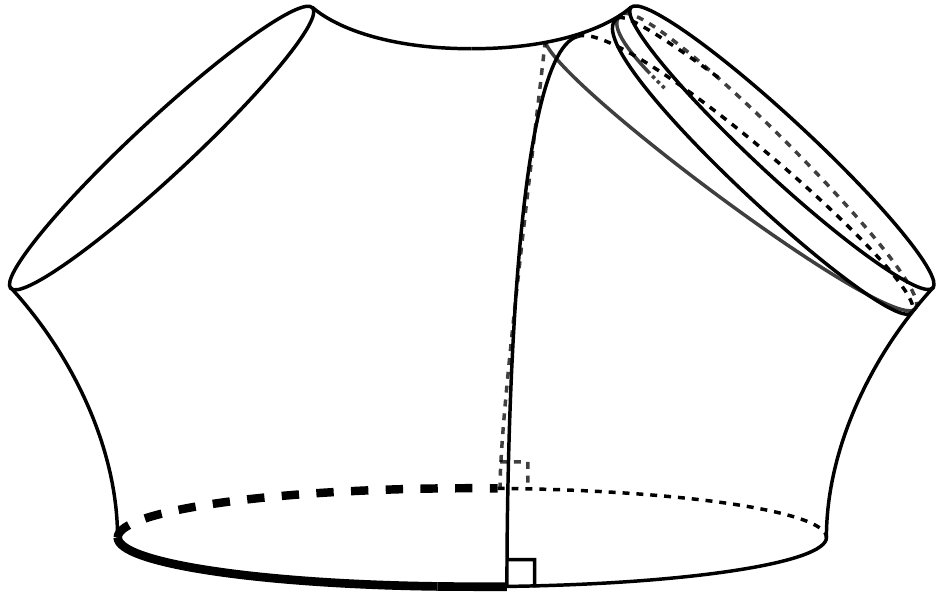,width=5.0cm,angle=0}}}
\vspace{-24pt}
\end{center}
}
\caption{The whole interval on $\beta$ corresponding to this pair of pants is in bold} \label{fig:P'}
\end{figure}

This, without doing a single computation, shows that $x \cdot \mu(x,y,z) < x \cdot \eta(x,y,z)$, and hence that $ \mu(x,y,z) < \eta(x,y,z)$.

We now pass to (ii). 

Using the definition of $\arctanh$ and setting $A:=e^{\frac{y+z}{2}}\geq 1$ we obtain
$$\mu(x,y,z)=
\frac{2}{x} \log\left( \frac{e^{\frac{x}{2}}+A}{e^{-\frac{x}{2}}+A}\right)
$$
We want to show $\mu(x,y,z) > \frac{1}{A}$ and to do this we'll show that 
$$
 \log\left( \frac{e^{\frac{x}{2}}+A}{e^{-\frac{x}{2}}+A}\right) > \frac{x}{2A}
$$
or equivalently that
$$
\frac{e^{\frac{x}{2}}+A}{e^{-\frac{x}{2}}+A} > e^{\frac{x}{2A}}
$$
In turn this is equivalent to showing that $F(x)>0$ for all $x>0$ (and $A\geq 1$) where
$$
F(x):=e^{\frac{x}{2}}+A - A e^{\frac{x}{2A}}-e^{\frac{x}{2}\left(\frac{1}{A}-1\right)}
$$
Note that $F(0) = 0$. We compute the derivative 
$$
F'(x) = \frac{1}{2} \left( e^{\frac{x}{2}}- e^{\frac{x}{2A}}\right) + \frac{1}{2}\left(1- \frac{1}{A}\right) e^{\frac{x}{2}\left(\frac{1}{A}-1\right)}
$$
which is positive for $A\geq 1$ and the claim follows.
\end{proof}

As a corollary, we can deduce Lemma \ref{lem:mcshanecount}. We recall that the statement is that cardinality of the set $\mathcal{P}_L$ is at most $e^{L}$. 

\begin{proof}[Proof of Lemma \ref{lem:mcshanecount}]
From Theorem \ref{thm:mcshane} and Proposition \ref{prop:mcshane} (i), we have
$$
\sum_{P \in \mathcal{P} \setminus \mathcal{P'}} \mu(P) +  \sum_{P \in \mathcal{P'}} \mu(P) = 1
$$
and thus
$$
\sum_{P \in \mathcal{P}_L} \mu(P) < 1
$$
Now by Proposition \ref{prop:mcshane} (ii), for $P$ with boundary lengths $\ell(\beta), y, z$ we have that $\mu(P) > \frac{1}{e^{\frac{y+z}{2}}}$. By the monotonicity of this lower bound we have
$$
\mu(P) > \frac{1}{e^L}
$$
for all $P\in \mathcal{P}_L$. The cardinality of $\mathcal{P}_L$ can thus be at most $e^L$.
\end{proof}

\addcontentsline{toc}{section}{References}
\bibliographystyle{plain}

\end{document}